\documentclass[letterpaper,12pt]{amsart}

\usepackage[all]{xy}                        %

\usepackage[margin=1in]{geometry}

\CompileMatrices                            % Faster

\UseTips                                    % Use

\input xypic
\usepackage[bookmarks=true]{hyperref}       % Hyperref
%in DVI and PDF (like HTML Links between sections)

\usepackage{amssymb,latexsym,amsmath,amscd}
\usepackage{xspace}
\usepackage{color}
\usepackage{graphicx}

\usepackage[utf8]{inputenc}
\usepackage{fourier}
\usepackage{array}
\usepackage{makecell}

\usepackage{array}
\newcolumntype{L}[1]{>{\raggedright\let\newline\\\arraybackslash\hspace{0pt}}m{#1}}
\newcolumntype{C}[1]{>{\centering\let\newline\\\arraybackslash\hspace{0pt}}m{#1}}
\newcolumntype{R}[1]{>{\raggedleft\let\newline\\\arraybackslash\hspace{0pt}}m{#1}}

%----------------------------------------------------------
\usepackage[all]{xy}                        %

\usepackage[margin=1in]{geometry}

\CompileMatrices                            % Faster

\UseTips                                    % Use

\input xypic
\usepackage[bookmarks=true]{hyperref}       % Hyperref
%in DVI and PDF (like HTML Links between sections)

\usepackage{amssymb,latexsym,amsmath,amscd}
\usepackage{xspace}
\usepackage{color}
\usepackage{graphicx}
\usepackage{caption}

\usepackage{mathtools}
\usepackage{tikz}
\usetikzlibrary{cd}

\usepackage[utf8]{inputenc}
\usepackage{fourier}
\usepackage{array}
\usepackage{makecell}

\usepackage{array}
% \newcolumntype{L}[1]{>{\raggedright\let\newline\\\arraybackslash\hspace{0pt}}m{#1}}
%\newcolumntype{C}[1]{>{\centering\let\newline\\\arraybackslash\hspace{0pt}}m{#1}}
%\newcolumntype{R}[1]{>{\raggedleft\let\newline\\\arraybackslash\hspace{0pt}}m{#1}}
%----------------PageLayout---------------------------------------------------
%\usepackage{layout}                        % with
%               \layout displays the current page layout.
                                            % page
%layout changes
%\textwidth6in
%\textheight8.8in
%\topmargin0in
%\oddsidemargin0.5in
%\evensidemargin0in
\reversemarginpar

\vfuzz2pt % Don't report over-full v-boxes if over-edge
%       is small
\hfuzz2pt % Don't report over-full h-boxes if over-edge
%  is small

%--------Theorem Environments---------------------------

\theoremstyle{plain}
\newtheorem{theorem}{Theorem}[section]
\newtheorem*{theorem*}{Theorem}
\newtheorem{proposition}[theorem]{Proposition}
\newtheorem{corollary}[theorem]{Corollary}
\newtheorem{lemma}[theorem]{Lemma}

\theoremstyle{definition}
\newtheorem{definition}[theorem]{Definition}

\newtheorem{remark}[theorem]{Remark}

%-----GeneralMacros----
\newcommand{\enm}[1]{\ensuremath{#1}}          %
%   Shortcuts
\newcommand{\op}[1]{\operatorname{#1}}
\newcommand{\cal}[1]{\mathcal{#1}}

\renewcommand{\bar}[1]{\overline{#1}}

\newcommand{\CC}{\enm{\mathbb{C}}}

\newcommand{\QQ}{\enm{\mathbb{Q}}}
\newcommand{\ZZ}{\enm{\mathbb{Z}}}

\newcommand{\PP}{\enm{\mathbb{P}}}

\newcommand{\EE}{\enm{\mathbb{E}}}

\newcommand{\Aa}{\enm{\cal{A}}}

\newcommand{\Ee}{\enm{\cal{E}}}
\newcommand{\Ff}{\enm{\cal{F}}}
\newcommand{\Gg}{\enm{\cal{G}}}

\newcommand{\Ll}{\enm{\cal{L}}}

\newcommand{\Oo}{\enm{\cal{O}}}
\newcommand{\Pp}{\enm{\cal{P}}}

\newcommand{\Ss}{\enm{\cal{S}}}
\newcommand{\Tt}{\enm{\cal{T}}}

\renewcommand{\phi}{\varphi}
\renewcommand{\theta}{\vartheta}
\renewcommand{\epsilon}{\varepsilon}

         % Standard Operators

\newcommand{\Pic}{\op{Pic}}

\newcommand{\Hom}{\op{Hom}}

\newcommand{\tensor}{\otimes}         % Symbols with
%meaning

      %

\renewcommand{\to}[1][]{\xrightarrow{\ #1\ }}

\newcommand{\into}{\hookrightarrow}

% Differentiation w.r.t. x [optionally something else]

% Underscore with optional phantom width

% Marginpar for sidenotes...

% the equal sign with a def on top: for definitions

% non math shortcuts
           % i.e. in
%italics and with proper spacing afterwards
           % e.g.
% ....

% ------------------ Document Specific Macros
%---------------
\renewcommand{\a}{\alpha}

\newcommand{\old}[1]{}

%%%%% RKL DEFINITIONS %%%%%%%

%\DeclareMathOperator{\rR}{R}

%% Fraktur letters

%% SF letters

%% Mathbold letters

%% Mathbold letters

\usepackage{mathtools}

\makeatletter
\def\dasharrowfill@#1#2#3#4{%
        $\m@th
        \thickmuskip0mu
        \medmuskip\thickmuskip
        \thinmuskip\thickmuskip
        \relax
        #4#1\mkern2mu
        \xleaders\hbox{$#4\mkern2mu#2\mkern2mu$}\hfill
        \mkern2mu
        #3$%
}

\def\dashleftarrowfill@{\dasharrowfill@\leftarrow\relbar\relbar}
\def\dashrightarrowfill@{\dasharrowfill@\relbar\relbar\rightarrow}
\def\dashleftrightarrowfill@{\dasharrowfill@\leftarrow\relbar\rightarrow}
\def\dashLeftarrowfill@{\dasharrowfill@\Leftarrow\Relbar\Relbar}
\def\dashRightarrowfill@{\dasharrowfill@\Relbar\Relbar\Rightarrow}
\def\dashLeftrightarrowfill@{\dasharrowfill@\Leftarrow\Relbar\Rightarrow}

\providecommand*\xdashleftarrow[2][]{%
  \ext@arrow 0055{\dashleftarrowfill@}{#1}{#2}}
\providecommand*\xdashrightarrow[2][]{%
  \ext@arrow 0055{\dashrightarrowfill@}{#1}{#2}}
\providecommand*\xdashleftrightarrow[2][]{%
  \ext@arrow 0055{\dashleftrightarrowfill@}{#1}{#2}}
\providecommand*\xdashLeftarrow[2][]{%
  \ext@arrow 0055{\dashLeftarrowfill@}{#1}{#2}}
\providecommand*\xdashRightarrow[2][]{%
  \ext@arrow 0055{\dashRightarrowfill@}{#1}{#2}}
\providecommand*\xdashLeftrightarrow[2][]{%
  \ext@arrow 0055{\dashLeftrightarrowfill@}{#1}{#2}}
\makeatother

\begin{document}

\title[Virtual Poincar\'e polynomial of moduli space of semistable sheaves]{Virtual Poincar\'e polynomial of moduli space \\of semistable sheaves of rank two on reducible curves}

\author{Sukmoon Huh, Dongsun Lim and Sang-Bum Yoo}

\address{Sungkyunkwan University, Suwon 440-746, Korea}
\email{sukmoonh@skku.edu}

\address{Sungkyunkwan University, Suwon 440-746, Korea}
\email{imdong0854@gmail.com}

\address{Gongju National University of Education, Gongju 32553, Korea}
\email{sbyoo@gjue.ac.kr}

\thanks{SH is supported by the National Research Foundation of Korea(NRF) grant funded by the Korea government(MSIT) (No. RS-2023-00208874).}

\subjclass[2020]{14D20, 14H60, 14F06}

%\author{Yeon-Jae Hong} 
%\address{Sungkyunkwan University, Suwon 440-746, Korea}
%\email{mathbunny529@skku.edu}
%\author{Author2}
%\address{Sungkyunkwan University, Suwon 440-746, Korea}
%\email{email2}
\keywords{stable sheaf, reducible curve, moduli space}

\begin{abstract}
The main purpose of this paper is to give an explicit description of the moduli space of semistable sheaves of rank two on a stable curve $C$ obtained by gluing two smooth curves at a point. We prove that the moduli space is irreducible and birational to a projective bundle over the moduli space of stable vector bundles on each component curve, independently of the choice of polarization. As an application, we compute the virtual Poincar\'e polynomial of the moduli space. 
\end{abstract}

\maketitle

\section{Introduction}

The classification of semistable sheaves on singular or reducible curves is a central problem in moduli theory. It aims to understand how vector bundles degenerate and how the associated moduli spaces reflect the geometry of the underlying curve. In the case of nodal curves, especially those that are reducible, the classical definition of (semi)stability must be refined by introducing a polarization. In such a setting, the structure of the moduli space is determined by the restrictions of sheaves to each irreducible component and by the gluing conditions at the nodes.

A foundational construction in the study of semistable sheaves on singular curves was given by D.~S. Nagaraj and C.~S. Seshadri in \cite{nagaraj-seshadri,seshadri}. They extended the classical moduli theory to nodal curves by constructing moduli spaces of torsion-free sheaves via geometric invariant theory. Moreover, further works by L.~Caporaso, C.~Simpson, and others generalized this construction to include compactified Picard schemes and moduli of coherent sheaves with suitable depth conditions; see \cite{c1,c2,c3,simpson}.

In \cite{teixidor,teixidor2}, M.~Teixidor i Bigas further analyzed the moduli of semistable sheaves on tree-like curves and showed that for a generic polarization, the moduli space may decompose into several irreducible components, each determined by a distinct multi-degree. This illustrates the intricate structure of the moduli space and its dependence on the chosen polarization. Within this framework, S.~Brivio and F.~F. Favale studied the moduli space of rank two semistable sheaves on a reducible nodal curve consisting of two smooth components intersecting transversely at a node in \cite{brivio-favale}. For a suitable choice of polarization, they proved that the moduli space admits a birational description as a projective bundle over the product of the moduli spaces of semistable rank two vector bundles on each component. Moreover, they demonstrated that this space arises as a degeneration of the moduli space defined over a smooth curve, illustrating how the geometry of the singular curve is reflected in the structure of the associated moduli space.

In this article, we study the moduli space $\mathbf{U}_C(\mathbf{w},2,0)$ of $S_\mathbf{w}$-equivalence classes of $\mathbf{w}$-semistable sheaves of depth one on a reducible curve $C = C_1 \cup C_2$ with rank two and Euler characteristic zero, where $C_1$ and $C_2$ are smooth projective curves of genus $g_1, g_2 \ge 1$, meeting transversely at a node $p \in C$. For a coherent sheaf $\Ee$ on $C$ with depth one, one can consider a sheaf $\Ee_i$ defined as the restriction of $\Ee$ to $C_i$ modulo torsion, from which one can consider the multi-rank and multi-characteristic of $\Ee$. In particular, S.~Brivio and F.~F. Favale \cite{brivio-favale} focused on sheaves with coprime rank and degree, restricting to pure rank cases. In contrast, we describe sheaves with multi-rank, including non-coprime cases. First we investigate the classification of such sheaves with lower multi-ranks and possible multi-characteristics; see Propositions \ref{propr0}, \ref{pprop22}, and \ref{p555}. This gives a building block for the Jordan-Hölder filtrations of the strictly $\mathbf{w}$-semistable sheaf $\Ee$ in $\mathbf{U}_C(\mathbf{w},2,0)$. Indeed, as shown in Propositions \ref{(2,0)(0,1)} and \ref{(2,0)(0,2)}, the loci of strictly $\mathbf{w}$-semistable sheaves form a single component in $\mathbf{U}_C(\mathbf{w},2,0)$.

%corresponding to all multi-degrees except $(1,1)$ coincide and form a single component of strictly $\mathbf{w}$-semistable sheaves. In contrast, the $(1,1)$-locus consists entirely of $\mathbf{w}$-stable sheaves and contains the strictly $\mathbf{w}$-semistable locus as a closed sublocus; see Proposition \ref{(2,0)(1,1)}.

To describe this moduli space more concretely, we show that for any polarization $\mathbf{w}$, the space $\mathbf{U}_C(\mathbf{w},2,0)$ is irreducible and admits an explicit description obtained from a birational morphism from a projective bundle over $\mathbf{M}_{C_1}(2,2g_1-1) \times \mathbf{M}_{C_2}(2,2g_2-1)$, where $\mathbf{M}_{C_i}(r,d)$ denotes the moduli space of semistable vector bundles of rank $r$ and degree $d$ on $C_i$. More precisely, every $\mathbf{w}$-semistable sheaf in $\mathbf{U}_C(\mathbf{w},2,0)$ can be described as the kernel of a surjection $\Ee_1 \oplus \Ee_2 \longrightarrow \Oo_p^{\oplus 2}$, where $\Ee_i \in \mathbf{M}_{C_i}(2,2g_i-1)$. This construction allows us to define a vector bundle $\EE$ over $\mathbf{M}_{C_1}(2,2g_1-1) \times \mathbf{M}_{C_2}(2,2g_2-1)$, using the Poincar\'e bundle on $\mathbf{M}_{C_i}(2,2g_i-1)$ for each $i$, whose fibre over $(\Ee_1, \Ee_2)$ is $\mathrm{Hom}(\Ee_{1,q_1}, \Ee_{2,q_2})$. It gives rise to a rational map
\[
\Phi: \quad \PP(\EE) \xdashrightarrow{\hspace*{1 cm}} \mathbf{U}_C(\mathbf{w},2,0).
\]
As proven in Proposition \ref{sub}, this map is in fact a morphism, and the points of $\PP(\EE)$ parametrize the classes of $\mathbf{w}$-semistable sheaves. In fact, this morphism is birational. 

\begin{theorem}\label{main}
The map $\Phi : \PP (\EE) \rightarrow \mathbf{U}_C(\mathbf{w}, 2, 0)$ is a birational morphism, which is an isomorphism if and only if $g=2$.
\end{theorem}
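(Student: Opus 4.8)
The plan is to treat the two assertions separately: first that $\Phi$ is a birational morphism, and then that this morphism fails to be an isomorphism exactly when $g>2$. Throughout I set $g=g_1+g_2$, the arithmetic genus of $C$, and record the dimension count $\dim\PP(\EE)=\dim\mathbf{M}_{C_1}(2,2g_1-1)+\dim\mathbf{M}_{C_2}(2,2g_2-1)+3=(4g_1-3)+(4g_2-3)+3=4g-3$, which matches $\dim\mathbf{U}_C(\mathbf{w},2,0)$. Since both spaces are irreducible projective varieties and $\Phi$ is a morphism by Proposition~\ref{sub}, for birationality it suffices to produce a dense open subset of $\PP(\EE)$ on which $\Phi$ is injective. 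For the isomorphism statement I will use that $\Phi$ is proper and birational onto the normal variety $\mathbf{U}_C(\mathbf{w},2,0)$, so that by Zariski's Main Theorem $\Phi$ is an isomorphism if and only if it is quasi-finite, i.e.\ has no positive-dimensional fibre.

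Let $V\subseteq\PP(\EE)$ be the open locus of triples $(\Ee_1,\Ee_2,[\phi])$ for which $\phi\in\Hom(\Ee_{1,q_1},\Ee_{2,q_2})$ is an isomorphism. Over $V$ the kernel sheaf $\Ee$ is the locally free sheaf on $C$ obtained by gluing $\Ee_1$ and $\Ee_2$ along $\phi$, and its restrictions recover $\Ee_1=(\Ee|_{C_1})/\mathrm{tors}$ and $\Ee_2=(\Ee|_{C_2})/\mathrm{tors}$. Because $\gcd(2,2g_i-1)=1$, every point of $\mathbf{M}_{C_i}(2,2g_i-1)$ is stable, so $\Aut(\Ee_i)=\CC^{*}$; the diagonal $\CC^{*}$ acts trivially on $\PP(\Hom(\Ee_{1,q_1},\Ee_{2,q_2}))$, whence the gluing class $[\phi]$ is uniquely recovered from the isomorphism class of $\Ee$. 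Thus $\Phi|_V$ is injective, $\Phi$ is birational, and $\Phi|_V$ is an isomorphism onto an open subset for every $g$.

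The complement $Q=\PP(\EE)\setminus V$ is the determinantal locus where $\phi$ has rank one; in each $\PP^3$-fibre this is the Segre quadric $\PP^1\times\PP^1$, so $Q$ is a divisor with $\dim Q=(4g_1-3)+(4g_2-3)+2=\sum_i(4g_i-2)=4g-4$. For $[\phi]\in Q$ the sheaf $\Ee$ is no longer locally free at $p$, and decomposing $\phi$ through its kernel and image lines yields a short exact sequence $0\to\Ee_1^-\to\Ee\to\Ee_2^-\to 0$, where $\Ee_i^-$ is the elementary modification of $\Ee_i$ at $p$ of rank two and degree $2g_i-2$, so that $\chi(\Ee_i^-)=0$ and all three sheaves have $\mathbf{w}$-slope $0$. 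The decisive point is that the $S_{\mathbf{w}}$-equivalence class of $\Ee$ depends only on the graded object of its Jordan--Hölder filtration, hence only on the $S$-classes of $\Ee_1^-$ and $\Ee_2^-$ and not on the finer data $(\Ee_i,[\phi])$; I will therefore compute the dimension of the locus $\Sigma=\Phi(Q)$ of these non-locally-free classes and compare it with $\dim Q$.

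Here the genus dichotomy enters. When $g_i\ge 2$ the modification $\Ee_i^-$ is generically a stable bundle, hence a single Jordan--Hölder factor contributing $\dim\mathbf{M}_{C_i}(2,2g_i-2)=4g_i-3$ to $\dim\Sigma$, which is one less than the $4g_i-2$ recorded in $\dim Q$; correspondingly the fibres of $Q\to\Sigma$ on the $i$-th side are the $\PP^1$ of elementary up-modifications recovering $\Ee_i$ from $\Ee_i^-$, and these are positive-dimensional because the up-modification genuinely changes the isomorphism class of the stable bundle. When $g_i=1$, instead, $\Ee_i^-$ has degree $0$ and rank two on the elliptic curve $C_i$, is never stable, and splits as a sum of two degree-zero line bundles, so its $S$-class lies in $\operatorname{Sym}^2\Pic^0(C_i)$ of dimension $2=4g_i-2$ and no dimension is lost. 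Hence $\dim\Sigma=4g-4-\#\{i:g_i\ge 2\}$, and $\Phi$ contracts $Q$ precisely when some $g_i\ge 2$, that is, when $g>2$; for $g=2$ one has $g_1=g_2=1$, the map $Q\to\Sigma$ is generically finite of equal dimension, and I would finish by verifying that it is actually injective, so that $\Phi$ is quasi-finite and thus an isomorphism by Zariski's Main Theorem. The main obstacle is precisely this last analysis over the strictly $\mathbf{w}$-semistable locus: pinning down the Jordan--Hölder factors and the $S$-equivalence classes (which is polarization-dependent and is exactly what Propositions~\ref{(2,0)(0,1)} and \ref{(2,0)(0,2)} are designed to control), identifying $\Sigma$ as above, and—in the case $g=2$—ruling out positive-dimensional fibres over every point of $\Sigma$, not merely the generic one, together with the normality of $\mathbf{U}_C(\mathbf{w},2,0)$ needed to invoke Zariski's Main Theorem.
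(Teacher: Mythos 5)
Your overall strategy coincides with the paper's at the decisive point: both proofs rest on comparing $\dim\mathbf{Q}=4g-4$ with the dimension of its image $\mathbf{U}_C(\mathbf{w},2,0)_{(0,0)}$, and your count $\dim\Sigma=4g-4-\#\{i:g_i\ge 2\}$ agrees with the paper's case analysis ($4g-6$ if both $g_i\ge 2$, $4g-5$ if exactly one $g_i=1$, $4g-4$ if $g=2$). Your birationality argument (recovering the gluing datum $[\phi]$ from a locally free $\Ee$ using $\Aut(\Ee_i)=\CC^{*}$) is essentially the paper's construction of the inverse $\Psi$ on the $\mathbf{w}$-stable locus, phrased via descent rather than via the sequence $(\dagger)$; both work, and your $g>2$ direction is complete as stated, since $\dim\mathbf{Q}>\dim\Phi(\mathbf{Q})$ forces a positive-dimensional fibre and hence non-injectivity without any appeal to Zariski's Main Theorem.

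The genuine divergence, and the place where your proposal is not yet a proof, is the $g=2$ sufficiency. You route it through Zariski's Main Theorem, which requires (i) injectivity of $\Phi$ over \emph{every} point of $\mathbf{U}_C(\mathbf{w},2,0)_{(0,0)}$, not just the generic one, and (ii) normality of $\mathbf{U}_C(\mathbf{w},2,0)$; you flag both but establish neither, and (ii) in particular is neither proved nor cited anywhere in the paper. The paper avoids ZMT entirely: it extends $\Psi$ to a morphism over the strictly semistable locus by exhibiting, for each class $[\Ll_1\oplus\Ll_1'\oplus\Ll_2\oplus\Ll_2']$, the rank-one point of $\mathbf{Q}$ above it via Lemma \ref{ker(0,0)}, and the well-definedness (equivalently, the injectivity of $\Phi$ on $\mathbf{Q}$) comes from Remark \ref{rrmml}: on an elliptic curve the stable bundle $\Ee_i\in\mathbf{M}_{C_i}(2,1)$ admitting a given $\Ee_i'$ with $[\Ee_i']\in\mathbf{M}_{C_i}(2,0)$ as an elementary down-modification at $q_i$ is \emph{unique}, because the induced map $\PP\,\mathrm{Ext}^1_{C_i}(\Oo_p,\Ee_i')\cong\PP^1\to\mathbf{M}_{C_i}(2,1)\cong C_i$ is constant by Hurwitz. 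This is exactly the fact that fails for $g_i\ge 2$ and produces the contracted $\PP^1$'s you describe. To close your argument you would need either to supply normality of the moduli space, or to replace ZMT by this explicit construction of a two-sided inverse morphism.
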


\noindent The strictly $\mathbf{w}$-semistable locus $\mathbf{Q}$ in $\PP(\EE)$ is characterized by rank-one linear maps between the fibres and defines a divisor that is irreducible and of codimension one. When each $C_i$ is an elliptic curve, this divisor admits the structure of a $(\PP^1 \times \PP^1)$-fibration over the base $\mathbf{M}_{C_1}(2,1) \times \mathbf{M}_{C_2}(2,1)$, with fibres corresponding to smooth quadrics in $\PP^3$; see Remark \ref{QQ}. The general description of the birational morphism $\Phi$ on $\mathbf{Q}$ can be given in terms of the Hecke curves in Remark \ref{Poin:ell}, and this enables explicit computations of virtual invariants, such as the virtual Poincaré polynomial.

\begin{theorem}\label{virtu}
The virtual Poincar\'e polynomial of $\mathbf{U}_C(\mathbf{w},2,0)$ can be expressed as
\[
\mathrm{P}(\mathbf{U}_C(\mathbf{w},2,0))
= (1+t)^{2g}\Big((1+t)^{2g-4} S_{g_1}(t)\,S_{g_2}(t)\,(-t^2+t^6) + U_{g_1}(t)\,U_{g_2}(t)\Big),
\]
where $S_g(t), U_g(t) \in \mathbb{Z}[t]$ are given by
\begin{align*}
S_g(t) &= \frac{(1-t+t^2)^{2g}-t^{2g}}{(1-t)^2(1+t^2)}, \\
\displaystyle U_g(t) &= \frac{2(1+t)^{2g-2}\big((1-t+t^2)^{2g}-t^{2g+2}\big) - t^2\big( (1+t)^{2g-2}(1-t^4) - (1-t)^{2g+2}\big)}{2(1-t)^2(1+t^2)}.    
\end{align*}
\end{theorem}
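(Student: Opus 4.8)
The plan is to combine the birational morphism $\Phi\colon\PP(\EE)\to\mathbf{U}_C(\mathbf{w},2,0)$ of Theorem~\ref{main} with the two formal properties of the virtual Poincar\'e polynomial $\mathrm{P}$: additivity over a stratification into locally closed pieces, and multiplicativity along a Zariski locally trivial fibration. By Proposition~\ref{sub} and the construction of $\Phi$, the open complement $\PP(\EE)\setminus\mathbf{Q}$ of the strictly $\mathbf{w}$-semistable locus parametrizes $\mathbf{w}$-stable sheaves, on which no nontrivial $S_{\mathbf{w}}$-identification occurs; hence $\Phi$ restricts to an isomorphism $\PP(\EE)\setminus\mathbf{Q}\xrightarrow{\sim}\mathbf{U}_C(\mathbf{w},2,0)^{s}$, while $\mathbf{Q}$ surjects onto the strictly semistable locus $\Phi(\mathbf{Q})$. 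Additivity then gives
\[
\mathrm{P}(\mathbf{U}_C(\mathbf{w},2,0))=\big(\mathrm{P}(\PP(\EE))-\mathrm{P}(\mathbf{Q})\big)+\mathrm{P}(\Phi(\mathbf{Q})),
\]
and I would compute the three terms in turn.

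First I would treat $\mathrm{P}(\PP(\EE))$ and $\mathrm{P}(\mathbf{Q})$. Since $\EE$ has rank four, $\PP(\EE)$ is a $\PP^{3}$-bundle over $\mathbf{M}_{C_1}(2,2g_1-1)\times\mathbf{M}_{C_2}(2,2g_2-1)$; the odd-degree moduli spaces are smooth projective, and by Atiyah--Bott and Newstead their Poincar\'e polynomials are $\mathrm{P}(\mathbf{M}_{C_i}(2,2g_i-1))=(1+t)^{4g_i-2}S_{g_i}(t)$, so multiplicativity yields $\mathrm{P}(\PP(\EE))=(1+t^2+t^4+t^6)(1+t)^{4g-4}S_{g_1}(t)S_{g_2}(t)$. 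The divisor $\mathbf{Q}$ is the fibrewise locus of rank-one maps, i.e. the Segre quadric $\PP^1\times\PP^1\subset\PP^3$ cut out by $\det=0$, so it is a Zariski locally trivial $(\PP^1\times\PP^1)$-bundle over the same base (Remark~\ref{QQ}) and $\mathrm{P}(\mathbf{Q})=(1+t^2)^2(1+t)^{4g-4}S_{g_1}(t)S_{g_2}(t)$. Using $(1+t^2+t^4+t^6)-(1+t^2)^2=-t^2+t^6$ and $(1+t)^{4g-4}=(1+t)^{2g}(1+t)^{2g-4}$, the difference $\mathrm{P}(\PP(\EE))-\mathrm{P}(\mathbf{Q})$ is exactly the first summand of the claimed formula.

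The substantive term is $\mathrm{P}(\Phi(\mathbf{Q}))$. Using the Jordan--H\"older analysis of Propositions~\ref{(2,0)(0,1)} and~\ref{(2,0)(0,2)} together with the Hecke-curve description of $\Phi|_{\mathbf{Q}}$ in Remark~\ref{Poin:ell}, I would identify $\Phi(\mathbf{Q})$ with the product $\mathbf{M}_{C_1}(2,2g_1-2)\times\mathbf{M}_{C_2}(2,2g_2-2)$ of even-degree semistable moduli: a rank-one gluing with kernel line $\ell_i\subset\Ee_{i,q_i}$ records, through the elementary modification at the node, a semistable bundle $F_i$ of degree $2g_i-2$ on $C_i$, and the $S_{\mathbf{w}}$-class of the resulting sheaf remembers only the pair $(F_1,F_2)$; thus the $\PP^1\times\PP^1$ fibres of $\mathbf{Q}$ collapse along Hecke curves. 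Fibering each factor over its determinant in $\op{Pic}^{2g_i-2}(C_i)$ contributes a Jacobian factor $(1+t)^{2g_i}$, so that $\mathrm{P}(\Phi(\mathbf{Q}))=(1+t)^{2g}U_{g_1}(t)U_{g_2}(t)$, provided $U_{g}(t)$ is the virtual Poincar\'e polynomial of the fixed-determinant, even-degree moduli space.

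The main obstacle is the closed evaluation of $U_{g}(t)=\mathrm{P}(\mathbf{M}_{C}(2,0)^{\mathrm{fix}})$, the virtual invariant of a singular GIT quotient on which stability and semistability diverge. I would stratify it into its smooth stable locus and the strictly semistable locus; the latter is the set of classes $L\oplus L^{-1}$, i.e. the Kummer variety $\op{Pic}^{0}(C)/\!\pm1$, contributing $\tfrac12\big((1+t)^{2g}+(1-t)^{2g}\big)$ via the $(-1)$-action on $H^{\ast}(\op{Pic}^0(C))$, while the stable part is obtained from the Harder--Narasimhan recursion for rank two, whose even-degree version requires a careful subtraction of the strictly semistable stratum. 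Verifying that this reproduces the stated $U_g(t)$, with the normalization $U_1(t)=1+t^2$ equal to the class of $\PP^1\cong\mathbf{M}_{E}(2,0)$, is the most delicate bookkeeping. Granting $U_g(t)$, assembling the three terms and factoring out $(1+t)^{2g}$ gives the stated identity; the case $g=2$ of Theorem~\ref{main} furnishes a consistency check, since there $\Phi|_{\mathbf{Q}}$ is an isomorphism and indeed $(1+t)^4U_1(t)^2=\mathrm{P}(\mathbf{Q})$.
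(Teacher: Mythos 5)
Your overall assembly is exactly the paper's: the additive decomposition $\mathrm{P}(\mathbf{U}_C(\mathbf{w},2,0))=\big(\mathrm{P}(\PP(\EE))-\mathrm{P}(\mathbf{Q})\big)+\mathrm{P}(\mathbf{U}_C(\mathbf{w},2,0)_{(0,0)})$, the computation of the first two terms from Newstead's formula via the $\PP^3$- and $(\PP^1\times\PP^1)$-bundle structures over $\mathbf{M}_{C_1}(2,2g_1-1)\times\mathbf{M}_{C_2}(2,2g_2-1)$, the identification of the image of $\mathbf{Q}$ with $\mathbf{M}_{C_1}(2,2g_1-2)\times\mathbf{M}_{C_2}(2,2g_2-2)$, and the final factoring out of the Jacobian contributions. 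All of that matches, and your algebra for the first summand is correct.

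The genuine gap is the evaluation of $U_g(t)$, i.e.\ the closed formula for $\mathrm{P}(\mathbf{M}_D(2,d))$ with $d$ even, which you explicitly defer (``granting $U_g(t)$''). This is not a routine citation: it is the bulk of the paper's Section~4, established there as Theorem~\ref{vvp of moduli of vector bundles of rk 2 and deg d} by running Kirwan's partial desingularization $\mathbf{M}_2\to\mathbf{M}_1\to\mathbf{M}_D(2,0)$, computing $\mathrm{P}(\mathbf{M}_2)$ (Proposition~\ref{PP of M_2}), and subtracting the contributions of the exceptional divisors $D_1$ and $D_2\setminus D_1$ using their explicit descriptions (complete-conic bundles over $\mathrm{Gr}(3,g)$-bundles over $J$, and a $\ZZ_2$-quotient of a $(\PP^{g-2}\times\PP^{g-2})$-bundle over $(J\times J)\setminus J$). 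Your proposed substitute --- stratifying the fixed-determinant space into the stable locus plus the Kummer variety and invoking the Harder--Narasimhan recursion --- is a plausible alternative in principle (it is essentially the point-counting route to the $E$-polynomial), but the step you wave at is precisely where the difficulty sits: in the non-coprime case the Atiyah--Bott/HN recursion computes equivariant cohomology of the semistable stratum, not the virtual Poincar\'e polynomial of the open, non-projective stable quotient, and bridging that requires either the gerbe/point-counting argument over finite fields or the desingularization the paper uses. Without carrying out one of these and verifying that the result simplifies to the stated $U_g(t)$, the theorem is not proved. (A minor slip: $\mathbf{M}_E(2,0)\cong\mathrm{Sym}^2(E)$ for an elliptic curve $E$; it is the fixed-determinant space, not $\mathbf{M}_E(2,0)$ itself, that is $\PP^1$, though your normalization $U_1(t)=1+t^2$ is correct.)
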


Motivated by the classical description of vector bundle moduli on smooth curves, one can interpret the moduli space $\mathbf{U}_C(\mathbf{w},2,0)$ through a degeneration process. Based on the foundational work of M.~S. Narasimhan and S.~Ramanan, who described in \cite{narashimhan-ramanan} the moduli space $\mathbf{M}_{C'}(2,0)$ of semistable vector bundles of rank two and degree zero on a smooth projective curve $C'$ of genus two, we consider a degeneration of $C'$ to a reducible nodal curve $C = C_1 \cup C_2$. In this setting, the moduli space $\mathbf{U}_C(\mathbf{w},2,0)$ arises naturally as a degeneration of $\mathbf{M}_{C'}(2,0)$ that is isomorphic to a $\PP^3$-bundle over $\Pic^0(C')$. Note that we have a degeneration of $\mathrm{Pic}^0(C')$ to $\mathrm{Pic}^0(C) \cong \mathrm{Pic}^0(C_1) \times \mathrm{Pic}^0(C_2)$; see \cite{lopez-martin}. The comparison between the two moduli spaces $\mathbf{U}_C(\mathbf{w}, 2, 0)$ and $\mathbf{M}_{C'}(2,0)$ via their virtual Poincar\'e polynomials is given in Corollary \ref{comparison}. 

On the other hand, M.~Teixidor i Bigas showed that the moduli space may have more than one irreducible component for general polarization; see \cite{teixidor,teixidor2}. It could be worthy to point out that this is not the case for any polarization in our setting; indeed, $\mathbf{U}_C(\mathbf{w},2,0)$ is irreducible for any $\mathbf{w}$.

The paper is organized as follows. In Section 2, we recall the definition of $\mathbf{w}$-stability and basic properties of depth-one sheaves on reducible curves. In Section 3, we describe and classify all semistable sheaves in $\mathbf{U}_C(\mathbf{w},2,0)$ and construct a birational morphism from the projective bundle $\PP(\EE)$. In Section 4, we compute the virtual Poincar\'e polynomial of the moduli space $\mathbf{U}_C(\mathbf{w}, 2, 0)$.

We would like to thank Young-Hoon Kiem and Insong Choe for explaining to us the Hecke cycles and the virtual Poincar\'e polynomial of the moduli space of semistable vector bundles on curves. 

%%%%%%%%%%%%%%%%%%%%%%%%%%%%%%%%%

\section{Preliminaries}
Let $C$ be a stable curve of arithmetic genus $g \geq 2$ over an algebraically closed field of characteristic zero. Then $C$ is a complete, connected curve with at worst nodal singularities and only finitely many automorphisms. Write $C=C_1 \cup \dots \cup C_m$ with each $C_i$ is an irreducible component of $C$ with genus $g_i \ge 1$. The set of singular points $\mbox{Sing}(C)$ can be written as a disjoint union of two subsets:
\[
\mbox{Sing}(C) = \mbox{Sing}_1(C)\sqcup \mbox{Sing}_2(C),
\]
where $\mbox{Sing}_j(C)$ is the set of nodes of $C$ lying exactly on $j$ irreducible components of $C$. Following the approach of C.~S. Seshadri in \cite{seshadri}, we consider depth-one sheaves to describe the moduli space of sheaves on $C$. For a coherent sheaf $\Ee$ on $C$ with {\it depth one}, meaning that $\dim \mathrm{Supp}(\Ff)=1$ for any subsheaf $\Ff\subset \Ee$, we define the {\it multi-rank} and {\it multi-degree} of $\Ee$ by
\[
\mathrm{rank}(\Ee) =\vec{r} = (r_1, \dots, r_m) \in \ZZ_{\ge 0}^{\oplus m},\quad \mathrm{deg}(\Ee) =\vec{d}= (d_1, \dots, d_m) \in \ZZ^{\oplus m},
\]
where $r_i:=\mathrm{rank}(\Ee_{|C_i})$ and $d_i:=\deg (\Ee_{|C_i})$. Here, $\Ee_{|C_i}:=\left(\Ee \tensor \Oo_{C_i}\right)/(\text{torsion})$ is called the {\it restriction} of $\Ee$ to $C_i$. We also define the {\it multi-characteristic} $\vec{\chi}=(\chi_1, \dots, \chi_m) \in \ZZ^{\oplus m}$ of $\Ee$ with $\chi_i:=\chi(\Ee_i)=d_i+r_i(1-g_i)$. In case $r_1=\dots=r_m=r$, we say that $\Ee$ has {\it pure rank} $r$ and write that $\mathrm{rank}(\Ee)=\vec{r}=r$. Similarly, we may define the {\it pure degree} and {\it pure characteristic}.

\begin{definition}\label{def1}
Let $\mathbf{w} = (w_1, \dots, w_m) \in \QQ_{>0}^{\oplus m}$ be an $m$-tuple of positive rational numbers with $\sum_{i}w_i = 1$, called a {\it polarization} of $C$. If $\Ee$ is a depth-one sheaf of multi-rank $(r_1, \dots, r_m)$, the {\it$\mathbf{w}$-rank} is defined to be ${\rm rank}_{\mathbf{w}}(\Ee)=\sum_{i=1}^{m}w_{i}r_i \in \QQ$, and the {\it $\mathbf{w}$-slope} of $\Ee$ on $C$ is defined to be
\[
\mu_{\mathbf{w}}(\Ee)=\frac{\chi(\Ee)}{\mathrm{rank}_{\mathbf{w}}(\Ee)}\in \QQ.
\]
Then the sheaf $\Ee$ is said to be {\it $\mathbf{w}$-stable} (resp. {\it $\mathbf{w}$-semistable}) if the inequality 
\[
\mu_{\mathbf{w}}(\Ff) < \text{(resp. $\le$) } \mu_{\mathbf{w}}(\Ee)
\]
holds for any proper subsheaf $\Ff$ of $\Ee$.
\end{definition}

\begin{remark}\label{rem333}
Assume that $C=C_1 \cup C_2$ with $\mbox{Sing}_1(C)=\emptyset$ and $\mbox{Sing}_2(C)=\{p\}$. Let $\Ee$ be a $\mathbf{w}$-semistable sheaf of depth one on $C$, meaning that it has no associated points of dimension zero, or equivalently, the singularities of $\Ee$ are at most of codimension one. We assume that $\mathrm{rank}(\Ee)=(r_1,r_2)$, $\chi(\Ee)=0$, and let $\Ee_i:=\Ee_{|C_i}$ be the restriction of $\Ee$ to $C_i$. Consider the exact sequence for $\{i,j\} = \{1,2\}$:
\[
0 \rightarrow \Oo_{C_j}(-C_i) \rightarrow \Oo_C \rightarrow \Oo_{C_i} \to 0.
\]
This induces a surjection:
\[
\Ee \stackrel{\rho_i} {\longrightarrow} \Ee_i \rightarrow 0,
\]
from which we deduce that $\chi(\Ee_i) =\chi_i \geq 0$ due to the $\mathbf{w}$-semistability of $\Ee$. Now, consider the following exact sequence:
\[
0 \rightarrow \Ee \stackrel{(\rho_1,\rho_2)} {\xrightarrow{\hspace*{1.1cm}}}  \Ee_1 \oplus \Ee_2  \stackrel{\theta_{\tau}} {\longrightarrow} \Tt \rightarrow 0, \tag{$\dagger$}
\]
where $\Tt$ is isomorphic to $\Oo_p^{\oplus \tau}$, with $\tau \in \{0,1,2,\dots,r_0\}$ with $r_0:=\mathrm{min}\{r_1, r_2\}$; see \cite[Proposition 2.2]{nagaraj-seshadri}. We denote by $\iota_i: \Ee_i \hookrightarrow \Ee_1 
 \oplus \Ee_2 \twoheadrightarrow \Oo_p^{\oplus \tau}$ the natural composition map. Conversely, for a triple $\zeta := (\Ee_1, \Ee_2, \theta_{\tau})$ where each $\Ee_i$ is a vector bundle on $C_i$, one can define a depth-one sheaf $\Ee_{\zeta} := \ker(\theta_{\tau})$ on $C$. We now consider the normalization map $\nu: C_1 \sqcup C_2 \rightarrow C$, which identifies $q_1$ and $q_2$ as $\nu^{-1}(p)=\{q_1,q_2\}$. For a coherent sheaf $\Ee$ on $C$ with depth one, its stalk at the node $p$ satisfies
\[
\Ee_p \cong \Oo_{C,p}^{\oplus a} \oplus \Oo_{q_1}^{\oplus b} \oplus \Oo_{q_2}^{\oplus c}
\]
for some integers $a, b, c$. Here, $a$ represents the free rank at $p$, while $b$ and $c$ describe torsion ranks on $C_1$ and $C_2$, respectively. We define the type of $\Ee$ at $p$ as $\delta(\Ee) := (a, b, c)$. By tensoring ($\dagger$) with $\Oo_p$, we obtain the exact sequence:
\[
0 \rightarrow {\rm{Tor}}_C^1(\Tt, \Oo_p) \rightarrow \Ee_p \rightarrow \Ee_{1,q_1} \oplus \Ee_{2,q_2} \rightarrow \Tt \rightarrow 0.
\]
Since $p$ is a singular point, ${\rm{Tor}}_C^1(\Tt, \Oo_p)$ is not necessarily trivial. Moreover, there is a canonical isomorphism
\[
{\rm{Tor}}_C^1(\Tt, \Oo_p) \cong \Tt \otimes {\rm{Tor}}^1(\Oo_p, \Oo_p).
\]
Since $p$ is a nodal point, we have ${\rm{Tor}}^1(\Oo_p, \Oo_p) \cong \Oo_p$, and so ${\rm{Tor}}_C^1(\Tt, \Oo_p) \cong \Tt$. Therefore, we conclude that
\[
\Ee_{1,q_1} \cong \Oo_{C,p}^{\oplus a} \oplus \Oo_{q_1}^{\oplus b}, \quad \Ee_{2,q_2} \cong \Oo_{C,p}^{\oplus a} \oplus \Oo_{q_2}^{\oplus c},
\]
and $\Tt \cong \Oo_p^{\oplus \tau}$. In particular, the rank relations are given by $r_1 = a + b$ and $r_2 = a + c$. Moreover, the Euler characteristic satisfies
\[
\chi_1 + \chi_2 = \tau,
\]
since $\chi(\Ee) = 0$ and $\chi_i \geq 0$. 
\end{remark}

\begin{lemma}\label{commute}
With the notations in Remark \ref{rem333}, let $\Ff$ be a saturated subsheaf of $\Ee$ in depth one, i.e., the quotient $\Ee/\Ff$ is of depth one. Then, the following exact sequence 
\[
0 \rightarrow \Ff \rightarrow \Ff_1 \oplus \Ff_2 \rightarrow \Tt' \rightarrow 0
\]
with $\Ff_i:= \Ff_{|C_i}$ and $\Tt'$ supporting the node $p$, fits into the commutative diagram
\[
\begin{tikzcd}
& 0 \arrow[d] & 0 \arrow[d] & 0 \arrow[d] & & \\
0 \arrow[r] & \Ff \arrow[r] \arrow[d] & \Ff_1 \oplus \Ff_2 \arrow[r] \arrow[d] & \Tt' \arrow[r] \arrow[d] & 0 & \\
0 \arrow[r] & \Ee \arrow[r] \arrow[d] & \Ee_1 \oplus \Ee_2 \arrow[r] \arrow[d] & \Tt \cong \Oo_p^{\oplus \tau} \arrow[r] \arrow[d] & 0 \\
0 \arrow[r] & \Ee/\Ff \arrow[r] \arrow[d] & (\Ee_1/\Ff_1) \oplus (\Ee_2/\Ff_2) \arrow[r] \arrow[d] & \Oo_p^{\oplus \tau}/\Tt' \arrow[r] \arrow[d] & 0 \\
& 0 & 0 & 0. & 
\end{tikzcd}
\]
\end{lemma}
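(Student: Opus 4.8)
The plan is to build the nine-term diagram from its top two rows by a snake-lemma argument, invoking the saturation hypothesis only at the final step. First I would observe that $\Ff$, being a subsheaf of the depth-one sheaf $\Ee$, is itself of depth one, since any subsheaf of $\Ff$ is a subsheaf of $\Ee$ and hence has one-dimensional support. Applying the construction of Remark \ref{rem333} (that is, \cite[Proposition 2.2]{nagaraj-seshadri}) to $\Ff$ then produces the top row $0 \rightarrow \Ff \rightarrow \Ff_1 \oplus \Ff_2 \rightarrow \Tt' \rightarrow 0$, in which $\Ff_i = \Ff_{|C_i}$ is torsion-free on the smooth curve $C_i$, hence a vector bundle, and $\Tt'$ is supported on $\mathrm{Sing}_2(C) = \{p\}$. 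The middle row is the sequence $(\dagger)$ attached to $\Ee$.

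Next I would produce the vertical maps and check commutativity. The left vertical map is the inclusion $\Ff \hookrightarrow \Ee$. For the middle one, the composite $\Ff \hookrightarrow \Ee \xrightarrow{\rho_i} \Ee_i$ is $\Oo_C$-linear with image in the torsion-free $\Oo_{C_i}$-module $\Ee_i$, so it factors through the torsion-free quotient $\Ff_i$ of $\Ff \otimes \Oo_{C_i}$; this factorization is a canonical map $\Ff_i \rightarrow \Ee_i$, and the resulting square relating the restriction maps $\rho_i$ for $\Ff$ and for $\Ee$ commutes by the universal property of $\Ff_i$. I would then verify that $\Ff_i \rightarrow \Ee_i$ is injective: its kernel is a subsheaf of the torsion-free $\Ff_i$, while at the generic point $\eta_i$ of $C_i$, a point lying away from the node where both restriction maps are isomorphisms, the map is the localization of $\Ff \hookrightarrow \Ee$ and hence injective. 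A torsion subsheaf of a vector bundle on a smooth curve vanishes, so $\Ff_i \hookrightarrow \Ee_i$. Thus $\Ff_1 \oplus \Ff_2 \hookrightarrow \Ee_1 \oplus \Ee_2$, and functoriality of cokernels furnishes the induced map $\Tt' \rightarrow \Tt$, so that the top two rows form a commutative diagram with exact rows.

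The crux is to show that $\Tt' \rightarrow \Tt$ is injective, which is what makes the symbol $\Oo_p^{\oplus \tau}/\Tt'$ meaningful and the right-hand column exact. Here I would apply the snake lemma to the two rows. Because the left and middle vertical maps are injective, the connecting homomorphism yields an injection $\ker(\Tt' \rightarrow \Tt) \hookrightarrow \Ee/\Ff$. But $\ker(\Tt' \rightarrow \Tt)$ is a subsheaf of $\Tt'$, hence supported on the single point $p$ and of zero-dimensional support, whereas $\Ee/\Ff$ is of depth one by the saturation hypothesis and so admits no nonzero subsheaf of zero-dimensional support. Therefore $\ker(\Tt' \rightarrow \Tt) = 0$. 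This is exactly the step that uses the saturation of $\Ff$, and I expect it to be the main point requiring care.

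Once all three vertical maps are known to be injective, the snake lemma (equivalently the nine lemma) completes the argument: their cokernels form the bottom row $0 \rightarrow \Ee/\Ff \rightarrow (\Ee_1/\Ff_1) \oplus (\Ee_2/\Ff_2) \rightarrow \Oo_p^{\oplus \tau}/\Tt' \rightarrow 0$, which is exact, while the three columns are exact by construction. This produces the asserted commutative diagram, and no further computation is needed.
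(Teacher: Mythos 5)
Your proposal is correct and follows essentially the same route as the paper: construct the injective vertical maps $\Ff_i \hookrightarrow \Ee_i$, obtain the induced map $g:\Tt' \rightarrow \Tt$, and use the Snake Lemma to embed $\ker(g)$ into $\Ee/\Ff$, which forces $\ker(g)=0$ because $\Ee/\Ff$ has depth one by the saturation hypothesis. The only difference is that you spell out the injectivity of $\Ff_i \rightarrow \Ee_i$ (via the generic point and torsion-freeness on the smooth $C_i$), which the paper simply asserts.
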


\begin{proof}
From the injective map $\a_i: \Ff_i \rightarrow \Ee_i$ induced from $\Ff \rightarrow \Ee$, one can define an injective map $\Ff_1 \oplus \Ff_2 \hookrightarrow \Ee_1 \oplus \Ee_2$ diagonally. Thus its cokernel is isomorphic to $(\Ee_1/\Ff_1) \oplus (\Ee_2/\Ff_2)$, and we get a map 
\[
\Ee/\Ff \to (\Ee_1/\Ff_1) \oplus (\Ee_2/\Ff_2).
\]
Since the composition $\Ff \rightarrow \Ff_1\oplus \Ff_2 \rightarrow \Ee_1\oplus \Ee_2 \rightarrow \Tt$ is trivial, we get a map $g:\Tt' \rightarrow \Tt$. By the Snake Lemma, $\ker(g)$ injects into $\Ee/\Ff$, and so we get $\ker(g) \cong 0$ by the assumption on $\Ff$. Then we get a surjection from $(\Ee_1/\Ff_1) \oplus (\Ee_2/\Ff_2)$ to $\Oo_p^{\oplus \tau}/\Tt'$, and the assertion follows.
\end{proof}

\begin{remark}
For each $\mathbf{w}$-semistable sheaf $\Ee$ of depth one on $C$, there exists a finite filtration of sheaves of depth one on $C$:
\[
0=\Ee^0 \subsetneq \Ee^1 \subsetneq \Ee^2 \subsetneq \cdots \subsetneq \Ee^l = \Ee
\]
such that each quotient $\Ee^i\big/ \Ee^{i-1}$ is a $\mathbf{w}$-stable sheaf of depth one on $C$ with a polarized slope 
\[
\mu_{\mathbf{w}}(\Ee^i \big/\Ee^{i-1})=\mu_{\mathbf{w}}(\Ee).
\]
This filtration is analogous to the Jordan-Hölder filtration in classical stability theory and plays a crucial role in defining the associated graded sheaf and $S_\mathbf{w}$-equivalence.
\end{remark}

\begin{definition}
The sheaf 
\[
\mathfrak{gr}_{\mathbf{w}}(\Ee)=\bigoplus_{i=1}^{l} \left(\Ee^{i}\big/\Ee^{i-1}\right)
\]
is called {\it{graded sheaf associated to}} $\Ee$. Two $\mathbf{w}$-semistable sheaves $\Ee$ and $\Ee'$ of depth one on $C$ are {\it{$S_\mathbf{w}$-equivalent}} if $\mathfrak{gr}_{\mathbf{w}}(\Ee) \cong \mathfrak{gr}_{\mathbf{w}}(\Ee')$. When $C$ is irreducible, i.e., $m=1$, we set $\mathbf{w}=(1)$ and $\mathfrak{gr}(\Ee) = \mathfrak{gr}_{\mathbf{w}}(\Ee)$ by convention. 
\end{definition}

\begin{definition}
We denote by $\mathbf{U}_C(\mathbf{w}, \vec{r}, \chi)$ the moduli space of $S_\mathbf{w}$-equivalence classes of depth one, $\mathbf{w}$-semistable sheaves on $C$ with multi-rank $\vec{r} = (r_1, \dots, r_m)$ and Euler characteristic $\chi$; see \cite[Chapter 7]{seshadri}. For a multi-characteristic $\vec{\chi}=(\chi_1, \cdots, \chi_m)$, we denote by $\mathbf{U}_C(\mathbf{w},\vec{r},\chi)_{\vec{\chi}}$ the irreducible component of $\mathbf{U}_C(\mathbf{w},\vec{r},\chi)$ that contains the classes $[\Ee]$ with $\chi_i=\chi(\Ee_i)$ for each $i$. Then we have
\begin{equation}\label{decomp}
\mathbf{U}_C(\mathbf{w}, \vec{r}, \chi) = \bigcup_{\vec{\chi}\in \ZZ^{\oplus m}}\mathbf{U}_C(\mathbf{w}, \vec{r}, \chi)_{\vec{\chi}}.
\end{equation}
\end{definition}

\begin{proposition}\label{texi}\cite[proposition 2.1]{teixidor2}
In the case $m=2$, the moduli space $\mathbf{U}_C(\mathbf{w}, r, \chi)$ is connected and has $r$ irreducible components with respect to generic polarization $\mathbf{w}$. Moreover, each component has dimension $r^2(g-1)+1$. 
\end{proposition}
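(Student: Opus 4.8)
The plan is to parametrize the sheaves in $\mathbf{U}_C(\mathbf{w}, r, \chi)$ by the gluing datum in the exact sequence
\[
0 \to \Ee \to \Ee_1 \oplus \Ee_2 \xrightarrow{\ \theta_\tau\ } \Oo_p^{\oplus\tau} \to 0
\]
of Remark \ref{rem333}, where $\Ee_i = \Ee_{|C_i}$ has rank $r$ on $C_i$ and $0 \le \tau \le r$. First I would show that the generic such $\Ee$ is locally free at $p$ (so $\tau = r$) with each $\Ee_i$ stable: by Lemma \ref{commute} every saturated subsheaf $\Ff \subset \Ee$ has its $\mathbf{w}$-slope controlled by the restrictions $\Ff_i \subset \Ee_i$, so $\mathbf{w}$-semistability of $\Ee$ forces $\Ee_1$ to be semistable on $C_1$ and $\Ee_2$ semistable on $C_2$, while the non-locally-free locus $\{\tau < r\}$ is the degeneracy locus of $\theta_\tau$ and hence a proper closed subset. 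On this open stratum $\chi_1 + \chi_2 = \chi + r$, where $\chi_i = \chi(\Ee_i)$.

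The count of components comes from a stability window for the multicharacteristic $\vec{\chi} = (\chi_1, \chi_2)$. Since $\mathrm{rank}_{\mathbf{w}}(\Ee) = r$, we have $\mu_{\mathbf{w}}(\Ee) = \chi/r$. Applying $\mathbf{w}$-semistability (Definition \ref{def1}) to the subsheaf $\ker\rho_2 \subset \Ee$ (supported on $C_1$, with $\mathrm{rank}_{\mathbf{w}} = w_1 r$ and, from $0 \to \ker\rho_2 \to \Ee \to \Ee_2 \to 0$, characteristic $\chi_1 - r$) gives $\chi_1 \le w_1\chi + r$, and to the subsheaf $\ker\rho_1 \subset \Ee$ (supported on $C_2$, with $\mathrm{rank}_{\mathbf{w}} = w_2 r$ and characteristic $\chi - \chi_1$) gives $\chi_1 \ge w_1\chi$. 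Hence
\[
w_1\chi \le \chi_1 \le w_1\chi + r .
\]
For generic $\mathbf{w}$ the number $w_1\chi$ is not an integer, so this length-$r$ interval with non-integer endpoints contains exactly $r$ integers; each admissible $\chi_1$ (equivalently each multidegree, with $\chi_2 = \chi + r - \chi_1$) determines one stratum, and for $\chi_1$ strictly interior the inequalities are strict, whence every $\mathbf{w}$-semistable sheaf there is $\mathbf{w}$-stable.

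For irreducibility and dimension I would fibre each admissible stratum over $\mathbf{M}_{C_1}(r, d_1) \times \mathbf{M}_{C_2}(r, d_2)$, the fibre over $(\Ee_1, \Ee_2)$ being the space of gluings $\PP(\mathrm{Hom}(\Ee_{1,q_1}, \Ee_{2,q_2})) \cong \PP^{r^2-1}$ modulo the scalar automorphisms of the $\Ee_i$. As the two base factors are irreducible of dimensions $r^2(g_i-1)+1$ and the fibre is irreducible of dimension $r^2-1$, each stratum is irreducible; using $g = g_1 + g_2$,
\[
\bigl(r^2(g_1-1)+1\bigr) + \bigl(r^2(g_2-1)+1\bigr) + (r^2-1) = r^2(g-1)+1 .
\]
Taking closures yields $r$ irreducible components of the stated dimension.

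Finally, for connectedness I would argue by specialization: $C$ is a flat degeneration of a smooth curve $C'$ of the same arithmetic genus, and $\mathbf{U}_C(\mathbf{w}, r, \chi)$ is the special fibre of a proper flat family whose generic fibre $\mathbf{M}_{C'}(r,\chi)$ is irreducible, hence connected, so the connectedness theorem forces the special fibre to be connected; the same conclusion can be seen intrinsically, since the boundary sheaves that are non-locally-free at $p$ (with $\tau < r$) or strictly $\mathbf{w}$-semistable lie in the closures of adjacent multidegree strata and link the $r$ components into a connected chain. I expect the main obstacle to be precisely this interface between the strata: verifying that the non-locally-free and strictly semistable loci sit inside the closure of the open stable stratum (producing no spurious components) and exhibiting explicit boundary sheaves common to the closures of adjacent strata to make the connectedness argument rigorous. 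The integer count in the window and the dimension computation are then routine.
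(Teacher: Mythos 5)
The first thing to say is that the paper does not prove this proposition: it is quoted from \cite{teixidor2} with a citation and no argument, so there is nothing in-house to compare your write-up against. Measured against the cited source, your strategy is the right one --- stratify by the multi-characteristic $\vec{\chi}=(\chi_1,\chi_2)$, extract a window of admissible $\chi_1$ by testing $\mathbf{w}$-semistability on the subsheaves $\ker\rho_1$ and $\ker\rho_2$, count lattice points, and fibre each stratum over $\mathbf{M}_{C_1}(r,d_1)\times\mathbf{M}_{C_2}(r,d_2)$ with fibre an open subset of $\PP^{r^2-1}$. The window $w_1\chi\le\chi_1\le w_1\chi+r$ is derived correctly, and the dimension count $(r^2(g_1-1)+1)+(r^2(g_2-1)+1)+(r^2-1)=r^2(g-1)+1$ is right.

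There are, however, several genuine gaps. First, your genericity condition ``$w_1\chi\notin\ZZ$'' is unachievable when $\chi=0$, in which case the window is $[0,r]$ with integral endpoints for \emph{every} $\mathbf{w}$ and the count of $r$ components fails; this is exactly the situation of the present paper, where Propositions \ref{(2,0)(0,1)}--\ref{(2,0)(1,1)} show that all the $\vec{\chi}$-loci of $\mathbf{U}_C(\mathbf{w},2,0)$ coincide and the moduli space is irreducible for every polarization. So the proof must either exclude this case or explain what ``generic'' means there. Second, $\mathbf{w}$-semistability of $\Ee$ does not by itself make the restrictions $\Ee_i$ semistable: a destabilizing $\Ff_i\subset\Ee_i$ only produces the subsheaf $\ker(\Ff_i\to\Oo_p^{\oplus\tau})$ of $\Ee$, whose Euler characteristic drops by up to $\tau$, so one gets a bound on $\mu(\Ff_i)$ rather than semistability; this is why the paper's Lemma \ref{minus} has to work to prove stability of the $\Ee_i$ and does so only for the normalization $\chi_i=1$. (Relatedly, when some $g_i=1$ the moduli of restrictions has dimension $\gcd(r,d_i)$ rather than $r^2(g_i-1)+1$, so the fibration dimension count needs $\gcd(r,d_i)=1$ on the generic stratum.) Third, the claim that every semistable sheaf on an interior stratum is stable is false: a subsheaf $\Ff$ of pure rank $r'<r$ with $\chi(\Ff)/r'=\chi/r$ satisfies $\mu_{\mathbf{w}}(\Ff)=\mu_{\mathbf{w}}(\Ee)$ for \emph{every} $\mathbf{w}$, so strict semistability survives genericity; your two inequalities only exclude destabilization by sheaves supported on a single component. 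Finally, that the $r$ stratum closures are pairwise non-nested (hence genuinely $r$ components) and that adjacent closures meet along the $\tau<r$ boundary are asserted rather than proved; the latter is the actual content of the connectedness statement and requires exhibiting explicit boundary sheaves, as you yourself anticipate. None of this invalidates the approach, but these are the steps where the real work lies.
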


%{\color{red}
%\begin{proposition}\label{texi}\cite[Proposition 2.1]{t}
%The moduli space $\mathbf{U}_C(\mathbf{w}, r, \chi)$ is connected. For a generic polarization $\mathbf{w}$, which ensures that strictly semistable bundles form a proper subvariety, it consists of $r$ irreducible components. Moreover, each component has dimension $r^2(g-1)+1$.
%\end{proposition}
%}

\begin{remark}
For a polarization $\mathbf{w}=(w_1, \dots, w_m)$, the moduli space $\mathbf{U}_C(\mathbf{w}, \vec{r}, \chi)$ parametrizes $\mathbf{w}$-(semi)stable sheaves on $C$ of multi-rank $\vec{r}$ and Euler characteristic $\chi$. When $C$ is irreducible, i.e., $m=1$, we set $\mathbf{w} = (1)$ by convention. In this case, $\mathbf{U}_C(\mathbf{w}, r, \chi)$ is denoted by $\mathbf{M}_C(r,d)$, where $d = \chi - r(1-g)$. For smooth irreducible curves, C.~S. Seshadri’s GIT construction establishes that $\mathbf{M}_C(r,d)$ is an irreducible, projective scheme; see \cite{seshadri}. The dimension of $\mathbf{M}_C(r,d)$ is given by
\begin{equation}
\dim \mathbf{M}_{C}(r,d) = \begin{cases} 
r^2(g-1)+1, & g \ge 2, \\
\gcd(r,d), & g=1.\end{cases}
\end{equation}
\noindent For the general case, one can refer to \cite{simpson}.
\end{remark}

\begin{remark}\label{genus}
Let $C$ be a smooth, irreducible, projective curve of genus $g \ge 1$, and let $\Ee$ be a vector bundle of rank $2$ and degree $d$ on $C$. By tensoring with a line bundle, we may assume $\deg(\Ee) \in \{0,1\}$.

\noindent\hangindent=2.5em (a-1) Suppose that $\Ee$ is a semistable vector bundle with $\deg(\Ee)=0$. If $\Ee$ is strictly semistable, then it fits into a short exact sequence
\[
0 \to \Ll \to \Ee \to \Ll' \to 0,
\]
for some $\Ll, \Ll' \in \Pic^0(C)$. It gives the Jordan-Hölder filtration of $\Ee$, so the $S$-equivalence class of $\Ee$ is $[\Ll \oplus \Ll']$, which is a point in $\mathrm{Sym}^2(\Pic^0(C))$. Indeed, the complement of the locus $\mathbf{M}_C^0(2,0)$ of stable vector bundles is identified with $\mathrm{Sym}^2(\Pic^0(C))$. When $g = 1$, the locus $\mathbf{M}_C^0(2,0)$ is empty, and so we get $\mathbf{M}_C(2,0) \cong \mathrm{Sym}^2(C)$; see \cite[Theorem 1]{tu}. For higher genus $g \geq 2$, the locus $\mathrm{Sym}^2(\Pic^0(C))$ forms a proper closed subset of $\mathbf{M}_C(2,0)$. Indeed, it is the singular locus for $g\ge 3$, while $\mathbf{M}_C(2,0)$ is smooth for $g=2$. 

\noindent\hangindent=2.5em (a-2) If $\Ee$ is semistable with $\deg (\Ee)=1$, then $\Ee$ is automatically stable. Note that there exists a natural determinant map $\det : \mathbf{M}_C(2,1) \rightarrow \Pic^1(C)$. It is an isomorphism for $g=1$, in which case we get $\mathbf{M}_C(2,1) \cong C$.

\noindent\hangindent=2.5em (b) \hspace{0.5em} If $\Ee$ is not semistable, then there exists a destabilizing line subbundle $\Ll \in \Pic^{\ell}(C)$ with $\ell \geq 1$ with an exact sequence
\[
0 \rightarrow \Ll \rightarrow \Ee \rightarrow \Ll' \rightarrow 0,
\]
for some $\Ll' \in \Pic^{\ell'}(C)$. One has $\mathrm{Ext}_C^1(\Ll',\Ll) \cong \mathrm{H}^0(C, \omega_C \otimes \Ll' \otimes \Ll^{\vee})^{\vee}$ by the Riemann-Roch, and so 
\[
\dim\left(\mathrm{Ext}_C^1(\Ll',\Ll)\right)= \max\{0,2g-2-2\ell+d\}, \quad \text{where} \; d=\deg(\Ee).
\]
Hence, the extension splits when $\ell \ge g+d-1$. In particular, if $C$ is an elliptic curve, every non-semistable rank $2$ vector bundle of degree $d$ is isomorphic to a direct sum $\Ll \oplus \Ll'$ with $\Ll \in \Pic^{\ell}(C)$, $\Ll' \in \Pic^{d-\ell}(C)$. The isomorphism classes of such bundles are parametrized by $C \times C$, modulo the $S_2$-action when $d=0$.
\end{remark}

\begin{remark}\label{propsemi}
For a sheaf $\Ee$ of depth one on $C=C_1 \cup \dots \cup C_m$, we have the following properties on $\mathbf{w}$-stability; see \cite[Corollary 8 and Proposition 12 in Ch. 7]{seshadri}.
    
\noindent\hangindent=1.5em  (a) Let $\Ll$ be a line bundle on $C$ such that $w_i\deg(\Ll_j)=w_j\deg(\Ll_i)$ for all $1 \leq i,j \leq m$, where $\Ll_i:=\Ll_{|C_i}$. Then, $\Ee$ is $\mathbf{w}$-stable (resp. $\mathbf{w}$-semistable) if and only if $\Ee \otimes \Ll$ is $\mathbf{w}$-stable (resp. $\mathbf{w}$-semistable). Note that $\Ee \otimes \Ll$ is also a sheaf of depth one.

%If $\Ee$ is a depth one sheaf with $\mathrm{rank}(\Ee)=(r_1,r_2)$, then $\mathbf{w}$-slope of $\Ee \otimes \Ll$ is as follows:
%\[
%\mu_{\mathbf{w}}(\Ee\otimes\Ll)=\frac{\chi(\Ee \otimes \Ll)}{w_1r_1+w_2r_2}
%%=\frac{\chi(\Ee)+r_1\deg(\Ll_1)+r_2\deg(\Ll_2)}{w_1r_1+w_2r_2}.
%\]
%Hence, for a subseaf $\Ff \subset \Ee$ with $\mathrm{rank}(\Ff)=(s_1,s_2)$, we have
%\[
%\mu_{\mathbf{w}}(\Ee\otimes\Ll)-\mu_{\mathbf{w}}(\Ff\otimes\Ll)=\mu_{\mathbf{w}}(\Ee)-\mu_{\mathbf{w}}(\Ff)+\frac{r_1\deg(\Ll_1)+r_2\deg(\Ll_2)}{w_1r_1+w_2r_2}-\frac{s_1\deg(\Ll_1)+s_2\deg(\Ll_2)}{w_1S_1+w_2s_2}.
%\]
%If $\Ee$ has a pure rank, then $\mu_{\mathbf{w}}(\Ee\otimes\Ll)=\mu_{\mathbf{w}}(\Ee)+\mu(\Ll)$.

\noindent\hangindent=1.5em  (b) $\Ee$ is $\mathbf{w}$-stable (resp. $\mathbf{w}$-semistable) if and only if, for any nontrivial quotient sheaf $\Gg$ of depth one on $C$, we have:
\[
\mu_{\mathbf{w}}(\Gg) > \mu_{\mathbf{w}}(\Ee) \quad (\mathrm{resp.} \geq)
\]

\noindent\hangindent=1.5em  (c) Let $\Ee$ and $\Ff$ be $\mathbf{w}$-semistable sheaves of depth one on $C$ with $\mu_{\mathbf{w}}(\Ee)=\mu_{\mathbf{w}}(\Ff)$. For a homomorphism $h:\Ee \rightarrow \Ff$, the sheaves $\mathrm{ker}(h)$ and $\mathrm{coker}(h)$ are of depth one and remain $\mathbf{w}$-semistable with
\[
\mu_{\mathbf{w}}(\mathrm{coker}(h))=\mu_{\mathbf{w}}(\mathrm{ker}(h))=\mu_{\mathbf{w}}(\Ee).
\]
\end{remark}

%\begin{remark}
%A sheaf $\Ee$ is called \emph{simple} if $\mathrm{End}(\Ee) = k$, where $k$ is the base field. That is, $\Ee$ has no nontrivial endomorphisms other than scalar multiplication.

%Let 
%\[
%0 \to \Ll \to \Ee \to \Mm \to 0
%\]
%be a non-split extension with $\mathrm{Hom}(\Mm, \Ll) \neq 0$. This condition ensures that there exists a nontrivial morphism from $\Mm$ to $\Ll$, preventing $\Ee$ from being simple. However, $\Ee$ remains \emph{indecomposable}, meaning that it cannot be expressed as a direct sum of two nontrivial sheaves.

%Intuitively, $\Ee$ is built from $\Ll$ and $\Mm$ in a way that prevents it from splitting into a direct sum, while still admitting nontrivial endomorphisms coming from the existence of maps between $\Mm$ and $\Ll$. Therefore, we conclude that $\Ee$ is indecomposable but not simple, i.e., $\mathrm{End}(\Ee) \neq k$.
%\end{remark}

%%%%%%%%%%%%%%%%%%%%%%%%%%%%%%%%%%%%
\section{Description of moduli space}\label{sec3}
In this section we will focus on the case when $C= C_1 \cup C_2$ with each $C_i$ a smooth curve of genus $g_i\ge 1$ and $\mbox{Sing}_2(C)=\{p\}$. In particular, $C$ is a stable curve of genus $g=g_1+g_2$. The main goal of this section is to describe explicitly the moduli space $\mathbf{U}_C(\mathbf{w}, 2, 0)$ of $\mathbf{w}$-semistable sheaves of pure rank $2$ on $C$ with $\chi(\Ee)=0$. More generally, let $\Ee$ be a $\mathbf{w}$-semistable sheaf on $C$ with $\mathrm{rank}(\Ee)=(r_1, r_2)$ and $\chi(\Ee)=0$. In the corresponding exact sequence ($\dagger$) to $\Ee$, let $\Aa_1$ be the image of the composition map $\Ee_1 \rightarrow \Ee_1\oplus \Ee_2 \rightarrow \Tt$. Then for its residual torsion sheaf $\Aa_2 \subset \Tt$, we get a surjection $\Ee_2 \twoheadrightarrow \Aa_2$. Setting $\Ee_i'$ to be the kernel of the surjection $\Ee_i\twoheadrightarrow \Aa_i$ for each $i=1,2$, we get an exact sequence
\[
0\to \Ee_1' \to \Ee \to \Ee_2' \to 0.
\]
By Remark \ref{propsemi} (b), one gets that $\chi_2=\chi(\Ee_2) \ge \chi(\Ee_2')\ge0$, which implies $d_2 \ge r_2(g_2-1)$. By exchanging the roles of $\Ee_1$ and $\Ee_2$, one also gets $d_1 \ge r_1(g_1-1)$. Thus the decomposition (\ref{decomp}) can be rewritten as 
\begin{equation}\label{decomp1}
\mathbf{U}_C(\mathbf{w}, \vec{r}, \chi) = \bigcup_{0\le \chi_1, \chi_2, \chi_1+\chi_2 \le r_0}\mathbf{U}_C(\mathbf{w}, \vec{r}, \chi)_{(\chi_1, \chi_2)},
\end{equation}
with $r_0:=\mathrm{min}\{r_1, r_2\}$. 

%{\color{red}
%\begin{remark}
%Let $\Ee$ be a vector bundle of rank two and degree one on $C_i$, then $\mathrm{h}^0(\Ee) \ge 1$ by Riemann-Roch. This means that there is a non-zero map $\Oo_{C_i} \to \Ee$. Moreover there is a natural isomorphism for a vector bundle $\Ff$ on $C$ by Serre duality:
%\[
%\mathrm{H}^0(\Ff) \cong \mathrm{H}^1(\omega_{C_i} \tensor \Ff^{\vee})^{\vee},
%\]
%\[
%\mathrm{H}^1(\Ff) \cong \mathrm{H}^0(\omega_{C_i} \tensor \Ff^{\vee})^{\vee},
%\]
%where $\omega_{C_i}$ is the canonical line bundle of $C_i$.

%Let $\Ll$ be a line bundle on a curve $C_i$. Then we have the following:
%\begin{itemize}
%\item [(a)] $\mathrm{H}^0(\Ll)=0$ if deg($\Ll$)<0.
%\item [(b)] $\mathrm{H}^1(\Ll)=0$ if deg($\Ll$)>0.
%\item [(c)] If deg($\Ll$)=0 and $\Ll \ncong \Oo_{C_i}$, then $\mathrm{H}^0(\Ll)=\mathrm{H}^1(\Ll)=0$.
%\end{itemize}
%\end{remark}
%}

\begin{proposition}\label{propr0}
Let $\Ee$ be a sheaf of depth one on $C$ with $\mathrm{rank}(\Ee) = (r_1, r_2)$ and $r_2 = 0$. Then, $\Ee$ is $\mathbf{w}$-(semi-)stable if and only if $\Ee_1$ in $(\dagger)$ of Remark \ref{rem333} is (semi-)stable. A similar assertion holds when $r_1 = 0$.
\end{proposition}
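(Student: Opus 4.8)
The plan is to reduce $\mathbf{w}$-(semi)stability on $C$ to ordinary (semi)stability on the single component $C_1$, by first showing that under the hypothesis $r_2=0$ the sheaf $\Ee$ is nothing but the pushforward of the vector bundle $\Ee_1$.

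First I would observe that $r_2=0$ collapses the sequence $(\dagger)$ almost entirely. Indeed $\Ee_2=\Ee_{|C_2}$ is a torsion-free sheaf of rank $0$ on the smooth integral curve $C_2$, so $\Ee_2=0$; consequently $r_0=\min\{r_1,r_2\}=0$, whence $\tau=0$ and $\Tt=0$. Thus $(\dagger)$ reduces to $0\to\Ee\xrightarrow{\rho_1}\Ee_1\to 0$, i.e. $\Ee\cong j_{1*}\Ee_1$, where $j_1:C_1\hookrightarrow C$ is the inclusion and $\Ee_1$ is a vector bundle on $C_1$ (a torsion-free sheaf on a smooth curve is locally free). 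In particular $\Ee$ is scheme-theoretically supported on $C_1$, hence annihilated by the ideal sheaf $\Ii_{C_1}\subset\Oo_C$.

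Next I would set up a correspondence of subsheaves. Since $\Ee$ is killed by $\Ii_{C_1}$, every subsheaf $\Ff\subseteq\Ee$ is killed by $\Ii_{C_1}$ as well, hence is of the form $\Ff=j_{1*}\Gg$ for a unique subsheaf $\Gg\subseteq\Ee_1$; this is an inclusion-preserving bijection sending nonzero proper subsheaves to nonzero proper subsheaves. Depth-one-ness is automatic, since a nonzero subsheaf of the pure-dimension-one sheaf $\Ee$ is again of depth one, and the corresponding $\Gg$ is locally free on the smooth curve $C_1$. The one point I would check explicitly, and which I expect to be the only genuine obstacle, is that the nodal singularity introduces no spurious subsheaves at $p$: locally $\Oo_{C,p}\cong k[[x,y]]/(xy)$, and modules annihilated by the ideal of the branch $C_1$ are exactly modules over $\Oo_{C_1,p}$, so the bijection is unaffected at the node.

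Finally I would compare slopes. For $\Ff=j_{1*}\Gg$ we have $\mathrm{rank}_{\mathbf{w}}(\Ff)=w_1\,\mathrm{rank}(\Gg)$ and $\chi(\Ff)=\chi(\Gg)$ (a closed immersion preserves the Euler characteristic), and likewise $\mathrm{rank}_{\mathbf{w}}(\Ee)=w_1 r_1$ and $\chi(\Ee)=\chi(\Ee_1)$. Hence
\[
\mu_{\mathbf{w}}(\Ff)=\frac{1}{w_1}\cdot\frac{\chi(\Gg)}{\mathrm{rank}(\Gg)},\qquad \mu_{\mathbf{w}}(\Ee)=\frac{1}{w_1}\cdot\frac{\chi(\Ee_1)}{r_1}.
\]
Because $w_1>0$ is a common positive factor, the inequality $\mu_{\mathbf{w}}(\Ff)<\mu_{\mathbf{w}}(\Ee)$ (resp. $\le$) is equivalent to $\chi(\Gg)/\mathrm{rank}(\Gg)<\chi(\Ee_1)/r_1$ (resp. $\le$); and since $\chi/\mathrm{rank}=\deg/\mathrm{rank}+(1-g_1)$ differs from the usual slope by the same constant for $\Gg$ and for $\Ee_1$, these are precisely the (semi)stability inequalities for $\Ee_1$ on $C_1$. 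Quantifying over all nonzero proper $\Ff$ (equivalently all nonzero proper $\Gg$) then yields that $\Ee$ is $\mathbf{w}$-(semi)stable if and only if $\Ee_1$ is (semi)stable. The case $r_1=0$ follows by exchanging the roles of $C_1$ and $C_2$.
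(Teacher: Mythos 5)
Your proof is correct and takes essentially the same route as the paper's: both identify $\Ee$ with (the pushforward of) the vector bundle $\Ee_1$, match subsheaves $\Ff\subseteq\Ee$ with subsheaves of $\Ee_1$, and observe that the $\mathbf{w}$-slope is an increasing affine function of the ordinary slope on $C_1$, so the (semi)stability inequalities correspond exactly. The paper's version is terser --- it simply notes $\Ff_1\cong\Ff$ for any subsheaf and computes $\mu_{\mathbf{w}}(\Ee)-\mu_{\mathbf{w}}(\Ff)=\tfrac{1}{w_1}(\mu(\Ee_1)-\mu(\Ff_1))$ --- while you spell out the subsheaf bijection and the local check at the node, but the content is the same.
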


\begin{proof}
Since $r_2=0$, it follows that $\Ee_2 \cong 0 \cong \Tt$ in $(\dagger)$ and $\Ee_1 \cong \Ee$ in $(\dagger)$. Note that the type of $\Ee$ is given by $\delta(\Ee) = (0, r_1, 0)$. Similarly, for any subsheaf $\Ff \subset \Ee$, we have $\Ff_1 \cong \Ff$. Next, the $\mathbf{w}$-soples of $\Ee$ and $\Ff$ are given by
\[
\mu_{\mathbf{w}}(\Ee) = \frac{\chi(\Ee_1)}{w_1 \cdot r_1} = \left(\frac{1}{w_1}\right)\mu(\Ee_1)+\frac{1-g_1}{w_1}, \quad \text{and} \quad  \mu_{\mathbf{w}}(\Ff) =  \left(\frac{1}{w_1}\right)\mu(\Ff_1)+\frac{1-g_1}{w_1}.
\]
The difference of the slopes can be computed as
\[
\mu_{\mathbf{w}}(\Ee)-\mu_{\mathbf{w}}(\Ff)=\left(\frac{1}{w_1}\right)(\mu(\Ee_1)-\mu(\Ff_1)),
\]
and so we get the assertion.
\end{proof}

%{\color{red} 
%\begin{lemma}\label{surj}
%For any $\tau$ with $\chi_1+\chi_2=\tau$ in $(\dagger)$, the map $\psi: \mathbf{U}_C(\mathbf{w},\vec{r},0) \rightarrow \mathbf{M}_{C_1}(r_1,d_1) \times \mathbf{M}_{C_2}(r_2,d_2)$ sending $[\Ee]$ to $([\mathfrak{gr}(\Ee_1)], [\mathfrak{gr}(\Ee_2)])$ is surjective.
%\end{lemma}

%\begin{proof}
%Pick a pair $([\Ee_1], [\Ee_2]) \in \mathbf{M}_{C_1}(r_1,d_1) \times \mathbf{M}_{C_2}(r_2,d_2)$, and set $\Ee =\Ee_1 \oplus \Ee_2$. For any subsheaf $\Ff$, we have $\Ff \cong \Ff_1 \oplus \Ff_2$ with each $\Ff_i=\Ee_i \cap \Ff$. By Proposition \ref{propr0}, each $\Ee_i$ is $\mathbf{w}$-semistable, and so $\chi(\Ff_i) \le 0$. From the following inequality 
%\[
%\mu_\mathbf{w}(\Ff) = \frac{\chi(\Ff_1) + \chi(\Ff_2)}{\mathrm{rank}_{\mathbf{w}}%(\Ff)} \leq 0 = \mu_\mathbf{w}(\Ee),
%\]
%we get that $\Ee$ is $\mathbf{w}$-semistable, and the assertion follows.
%\end{proof}
%}

\begin{proposition}\label{pprop22}
The moduli space $\mathbf{U}_C(\mathbf{w},1,0)$ is isomorphic to $\Pic^{g_1-1}(C_1)\times \Pic^{g_2-1}(C_2)$ for any polarization $\mathbf{w}$. 
\end{proposition}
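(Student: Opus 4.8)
The plan is to first reduce the classification of $\mathbf{w}$-semistable depth-one sheaves of multi-rank $(1,1)$ and $\chi=0$ to a short finite list via the defining sequence $(\dagger)$, then to identify the associated graded object of each such sheaf, and finally to promote the resulting bijection on closed points to an isomorphism of schemes.

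First I would apply Remark \ref{rem333} with $\vec r = (1,1)$, so that $r_0 = \min\{r_1,r_2\} = 1$ and $\Tt \cong \Oo_p^{\oplus\tau}$ with $\tau \le 1$. Combined with the decomposition \eqref{decomp1} and the relations $0\le \chi_1,\chi_2$ and $\chi_1+\chi_2=\tau$, this leaves exactly three possibilities $(\chi_1,\chi_2,\tau)\in\{(0,0,0),(1,0,1),(0,1,1)\}$. I would record that $\mu_{\mathbf{w}}(\Ee)=0$ for every such $\Ee$, and that (since saturating a subsheaf only increases its slope) it suffices to test $\mathbf{w}$-stability against saturated subsheaves, which here have multi-rank $(1,0)$ or $(0,1)$ by a multi-rank count on the depth-one quotient (cf.\ Lemma \ref{commute}). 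Each such test reduces to an inequality $\chi'\le 0$ on the Euler characteristic of the subsheaf, and since $w_1,w_2>0$ enter only as positive denominators, all (semi)stability conditions are visibly independent of $\mathbf w$; this already yields the claimed polarization-independence.

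Next I would read off the local type at $p$. A depth-one sheaf of multi-rank $(1,1)$ has $\delta(\Ee)\in\{(1,0,0),(0,1,1)\}$: either it is locally free at $p$, i.e.\ a line bundle on $C$ with $d_1+d_2=g-1$, or it is not, in which case the local classification of torsion-free modules over the node forces $\Ee\cong\nu_*\Ll_1\oplus\nu_*\Ll_2$ for line bundles $\Ll_i$ on $C_i$. In the first case I would use the restriction sequence $0\to\Ee_1(-p)\to\Ee\to\Ee_2\to 0$ to exhibit the maximal multi-rank $(1,0)$ subsheaf $\Ee_1(-p)$, of Euler characteristic $\chi_1-1$; together with its $C_2$-analogue this shows $\Ee$ is strictly $\mathbf{w}$-semistable and never stable, with Jordan--Hölder graded pieces $\Ee_1(-p),\Ee_2$ when $(\chi_1,\chi_2)=(1,0)$ and $\Ee_1,\Ee_2(-p)$ when $(\chi_1,\chi_2)=(0,1)$. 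Using Proposition \ref{propr0} to see that a multi-rank $(1,0)$ or $(0,1)$ factor is $\mathbf{w}$-stable iff its restriction is a line bundle (automatic), I would conclude that in every case the associated graded object is $\nu_*\Ll_1\oplus\nu_*\Ll_2$ with $\chi(\Ll_i)=0$, i.e.\ $\Ll_i\in\Pic^{g_i-1}(C_i)$. Since each $\Ll_i$ is recovered from the graded object as its restriction to $C_i$, this sets up a bijection between closed points of $\mathbf{U}_C(\mathbf{w},1,0)$ and $\Pic^{g_1-1}(C_1)\times\Pic^{g_2-1}(C_2)$.

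Finally I would upgrade this bijection to an isomorphism of schemes. The plan is to build a morphism $\phi$ from $P:=\Pic^{g_1-1}(C_1)\times\Pic^{g_2-1}(C_2)$ to $\mathbf{U}_C(\mathbf{w},1,0)$ out of the flat family $\nu_*\Pp_1\oplus\nu_*\Pp_2$ on $C\times P$ obtained from the Poincar\'e bundles $\Pp_i$ on $C_i\times\Pic^{g_i-1}(C_i)$, invoking the coarse moduli property of Seshadri's construction; by the previous paragraph $\phi$ is bijective on closed points. To invert it I would restrict a (local) universal family to each $C_i$ modulo torsion to obtain families of degree-$(g_i-1)$ line bundles, inducing $\psi:\mathbf{U}_C(\mathbf{w},1,0)\to P$ with $\psi\circ\phi=\id_P$. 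The hard part will be exactly this last step: making the scheme structure precise where a global universal family need not exist, and upgrading a bijective morphism to an isomorphism. I expect to settle it either through the inverse $\psi$ above or, lacking a clean universal family, by noting that $P$ is smooth and projective and that $\phi$ is bijective and birational onto the proper target, so that Zariski's main theorem gives an isomorphism once $\mathbf{U}_C(\mathbf{w},1,0)$ is known to be reduced and normal.
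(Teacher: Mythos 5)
Your argument follows essentially the same route as the paper's proof: the case split on $\tau\in\{0,1\}$ coming from the sequence $(\dagger)$, the determination of $(\chi_1,\chi_2)$ from $\mathbf{w}$-semistability (forcing $(0,0)$ when $\tau=0$ and $(1,0)$ or $(0,1)$ when $\tau=1$), and the identification of the associated graded object as $\Ll_1\oplus\Ll_2$ with $\Ll_i\in\Pic^{g_i-1}(C_i)$ all match the printed argument, which likewise observes that every point is strictly semistable. The only substantive difference is that you spell out the promotion of the resulting pointwise bijection to an isomorphism of schemes via Poincar\'e families and the coarse moduli property, a step the paper leaves implicit.
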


\begin{proof}
Let $\Ll$ be a sheaf of depth one in $\mathbf{U}_C(\mathbf{w},1,0)$ with associated exact sequence ($\dagger$) for $\Ll$. When $\tau=0$, we have $\Ll \cong \Ll_1 \oplus \Ll_2$, and since $\chi(\Ll)=0$, it follows that $\chi_1+\chi_2=0$. Now the $\mathbf{w}$-semistability of $\Ll$ implies that each $\chi_i \le 0$, and so $\chi_1=\chi_2=0$, i.e., $d_i = g_i-1$. Conversely, for any line bundle $\Ll_i \in \Pic^{g_i-1}(C_i)$, $\Ll_1 \oplus \Ll_2$ is $\mathbf{w}$-semistable. Thus we get
\[
\mathbf{U}_C(\mathbf{w},1,0)_{(0,0)}\cong \Pic^{g_1-1}(C_1)\times \Pic^{g_2-1}(C_2).
\]

\noindent Now assume that $\tau=1$, i.e., $\delta(\Ll)=(1,0,0)$, and so $\Ll$ fits into the following exact sequence:
\[
0 \to \Ll \to \Ll_1 \oplus \Ll_2 \to \Oo_{p} \to 0.
\]
Since $\chi_1+\chi_2=1$, we may assume that $\chi_1 \ge 1$ without loss of generality. If the composition map $\Ll_1 \hookrightarrow \Ll_1 \oplus \Ll_2 \twoheadrightarrow \Oo_p$ is surjective, then we obtain an exact sequence
\[
0 \to \Ll_1(-p) \to \Ll \to \Ll_2 \to 0.
\]
From the $\mathbf{w}$-semistability of $\Ll$, we get $\chi_1-1=\chi(\Ll_1(-p)) \le 0$, and so $(\chi_1,\chi_2)=(1,0)$. If the composition map is not surjective, then $\Ll_1$ would be a subsheaf of $\Ll$, contradicting the $\mathbf{w}$-semistability of $\Ll$.
Thus, we have 
\[
[\Ll]=[\Ll_1(-p) \oplus \Ll_2] \in \mathbf{U}_C(\mathbf{w},1,0)_{(1,0)}.
\]
This implies, as before, that $\mathbf{U}_C(\mathbf{w},1,0)_{(1,0)} \cong \Pic^{g_1-1}(C_1)\times \Pic^{g_2-1}(C_2)$. From the case when the composition map $\Ll_2 \to \Oo_p$ is surjective, we get $\mathbf{U}_C(\mathbf{w}, 1, 0)_{(0, 1)} \cong \Pic^{g_1 - 1}(C_1) \times \Pic^{g_2 - 1}(C_2)$. Then the assertion follows from (\ref{decomp1}) with $r_0=1$. Note that each point in the moduli space is strictly semistable.
\end{proof}

\begin{proposition}\label{p555}
The moduli space $\mathbf{U}_C(\mathbf{w},(2,1),0)$ is isomorphic to $\mathbf{M}_{C_1}(2,2g_1-2) \times \Pic^{g_2-1}(C_2)$ for any polarization $\mathbf{w}$.
\end{proposition}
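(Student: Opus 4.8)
The plan is to mirror the strategy of Proposition~\ref{pprop22}, analysing the decomposition \eqref{decomp1} which, since here $r_0=\min\{2,1\}=1$, reduces to the three strata indexed by $(\chi_1,\chi_2)\in\{(0,0),(1,0),(0,1)\}$. The structural principle I would establish first is that every $\mathbf{w}$-semistable sheaf $\Ee$ of multi-rank $(2,1)$ with $\chi(\Ee)=0$ is strictly $\mathbf{w}$-semistable, with associated graded of the form $\mathfrak{gr}_{\mathbf{w}}(\Ee)\cong \Ff_1\oplus \Ll_2$, where $\Ff_1$ is a semistable rank-two bundle of degree $2g_1-2$ on $C_1$ (so $\chi(\Ff_1)=0$) and $\Ll_2\in\Pic^{g_2-1}(C_2)$ (so $\chi(\Ll_2)=0$). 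The reason no other graded pieces can appear is that, by Proposition~\ref{pprop22}, there are no $\mathbf{w}$-stable sheaves of multi-rank $(1,1)$ and slope $0$; hence the only $\mathbf{w}$-stable slope-zero building blocks available in multi-rank $\le(2,1)$ are line bundles of degree $g_i-1$ on $C_i$ and stable rank-two bundles of degree $2g_1-2$ on $C_1$ (each $\mathbf{w}$-stable on $C$ by Proposition~\ref{propr0}), and matching the total multi-rank $(2,1)$ leaves only the patterns $\Ff_1\oplus\Ll_2$ and $\Ll_1\oplus\Ll_1'\oplus\Ll_2$. This already yields the set-theoretic bijection with $\mathbf{M}_{C_1}(2,2g_1-2)\times\Pic^{g_2-1}(C_2)$.

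To carry this out stratum by stratum, I would first treat $\tau=0$, where $\Ee\cong\Ee_1\oplus\Ee_2$ and $\chi_1+\chi_2=0$; the $\mathbf{w}$-semistability of $\Ee$ forces $\chi_1=\chi_2=0$, and testing against rank-one subsheaves of $\Ee_1$ supported on $C_1$ forces $\Ee_1$ to be a semistable rank-two bundle of degree $2g_1-2$, while $\Ee_2\in\Pic^{g_2-1}(C_2)$ is arbitrary. Reading off the Jordan--Hölder factors shows that $\mathfrak{gr}_{\mathbf{w}}(\Ee)$ records exactly the $S$-equivalence class of $\Ee_1$ together with $\Ee_2$, giving $\mathbf{U}_C(\mathbf{w},(2,1),0)_{(0,0)}\cong \mathbf{M}_{C_1}(2,2g_1-2)\times\Pic^{g_2-1}(C_2)$.

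For the strata with $\tau=1$ I would argue as in Proposition~\ref{pprop22}. In the case $(\chi_1,\chi_2)=(1,0)$, if the component $\iota_1\colon\Ee_1\to\Oo_p$ of $\theta_1$ were zero, then $\Ee_1$ would be a subsheaf of $\Ee$ with $\mu_{\mathbf{w}}(\Ee_1)=1/(2w_1)>0$, contradicting semistability; hence $\iota_1$ is surjective and $\Ee_1':=\ker(\iota_1)$ is the elementary (Hecke) modification of $\Ee_1$ at $p$, a rank-two bundle of degree $2g_1-2$. The projection $\rho_2$ then gives a short exact sequence $0\to \Ee_1'\to\Ee\to\Ee_2\to 0$ with both ends of slope $0$, and I would check, using Remark~\ref{propsemi}, that $\Ee$ is $\mathbf{w}$-semistable if and only if $\Ee_1'$ is semistable, so that $\mathfrak{gr}_{\mathbf{w}}(\Ee)$ records the $S$-class of $\Ee_1'$ and $\Ee_2$. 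The case $(\chi_1,\chi_2)=(0,1)$ is symmetric: $\iota_2$ must be surjective, $\Ee_2(-p)=\ker(\iota_2)\in\Pic^{g_2-1}(C_2)$, and $0\to\Ee_2(-p)\to\Ee\to\Ee_1\to 0$ records the $S$-class of $\Ee_1$ (now of degree $2g_1-2$) together with $\Ee_2(-p)$. In all three strata the associated graded is of the promised shape, so the strata coincide as sets of $S_{\mathbf{w}}$-classes and \eqref{decomp1} collapses to a single copy of $\mathbf{M}_{C_1}(2,2g_1-2)\times\Pic^{g_2-1}(C_2)$.

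The step I expect to be the real obstacle is upgrading this bijection of points to an isomorphism of moduli spaces. For this I would construct the map $\mathbf{M}_{C_1}(2,2g_1-2)\times\Pic^{g_2-1}(C_2)\to \mathbf{U}_C(\mathbf{w},(2,1),0)$, $(\Ff_1,\Ll_2)\mapsto[\Ff_1\oplus\Ll_2]$, in families: from the Poincaré bundles on the two factors one forms the family of pushforwards of $\Ff_1\oplus\Ll_2$ to $C$, which is fibrewise depth-one and $\mathbf{w}$-semistable, and the universal property of $\mathbf{U}_C(\mathbf{w},(2,1),0)$ realises the assignment as a morphism. It is bijective on closed points by the analysis above, and since the source is a normal projective variety, I would conclude that it is an isomorphism by Zariski's main theorem (a bijective morphism from a variety onto a normal variety being an isomorphism in characteristic zero). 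The delicate points are the existence of the Poincaré/universal families needed to exhibit the map as a morphism and the normality required to invoke Zariski's main theorem; these are precisely the ingredients underlying the explicit parametrisation already used in Proposition~\ref{pprop22}.
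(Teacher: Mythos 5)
Your proposal is correct and follows essentially the same route as the paper: the same reduction via \eqref{decomp1} with $r_0=1$ to the three strata $(\chi_1,\chi_2)\in\{(0,0),(1,0),(0,1)\}$, the same surjectivity argument for $\iota_i$ when $\tau=1$, and the same identification of each stratum with $\mathbf{M}_{C_1}(2,2g_1-2)\times\Pic^{g_2-1}(C_2)$ via the elementary modification $\Ee_1'$ (resp.\ $\Ll_2(-p)$). The only difference is that you spell out the family/Zariski-main-theorem step needed to upgrade the pointwise identification to a scheme isomorphism, which the paper leaves implicit.
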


\begin{proof}
Let $\Ee$ be a sheaf of depth one in $\mathbf{U}_C(\mathbf{w},(2,1),0)$ with the exact sequence ($\dagger$) with a vector bundle $\Ee_1$ of rank two on $C_1$ and a line bundle $\Ll_2$ on $C_2$.

First assume that $\tau=0$. Then $\Ee$ splits as a direct sum, i.e., $\Ee \cong \Ee_1 \oplus \Ll_2$, and we have $\delta(\Ee) = (0,2,1)$. From the $\mathbf{w}$-semistability of $\Ee$, we get that
\[
\mu_\mathbf{w}(\Ee_1) \le \mu_\mathbf{w}(\Ee)=0, \quad \text{and} \quad \mu_\mathbf{w}(\Ll_2) \le \mu_\mathbf{w}(\Ee)=0
\]
and so $\chi_1=\chi_2=0$. Moreover, one can check that $\Ee_1$ is semistable, and so it corresponds to a point in $\mathbf{M}_{C_1}(2,2g_1-2)$. Conversely, for any $\Ee_1 \in \mathbf{M}_{C_1}(2,2g_1-2)$ and $\Ll_2 \in \Pic^{g_2-1}(C_2)$, $\Ee_1 \oplus \Ll_2$ is $\mathbf{w}$-semistable using Proposition \ref{propr0}. Therefore, we get
\[
\mathbf{U}_C(\mathbf{w},(2,1),0)_{(0,0)} \cong \mathbf{M}_{C_1}(2,2g_1-2) \times \Pic^{g_2-1}(C_2).
\]

Now assume that $\tau=1$. Note that $\Ee$ fits into the following exact sequence:
\[
0 \to \Ee \to \Ee_1 \oplus \Ll_2 \to \Oo_{p} \to 0.
\]
Since $\chi_1+\chi_2=1$, first we assume that $\chi_1 \ge 1$. If the composition map $\Ee_1 \hookrightarrow \Ee_1 \oplus \Ll_2 \twoheadrightarrow \Oo_p$ is surjective, then we obtain an exact sequence
\[
0 \to \Ee_1' \to \Ee \to \Ll_2 \to 0.
\]
From the $\mathbf{w}$-semistability of $\Ee$, we get $\chi_1-1=\chi(\Ee_1') \le 0$, and so $(\chi_1,\chi_2)=(1,0)$. Furthermore, from $\mu_\mathbf{w}(\Ee)=\mu_\mathbf{w}(\Ee_1')=0$ one gets that $\Ee_1'$ is also $\mathbf{w}$-semistable. By Proposition \ref{propr0}, $\Ee_1'$ is semistable as a sheaf on $C_1$, i.e., $[\Ee_1'] \in \mathbf{M}_{C_1}(2,2g_1-2)$. If the composition map is not surjective, then $\Ee_1$ would be a subsheaf of $\Ee$, contradicting the $\mathbf{w}$-semistability of $\Ee$.
Thus, we have 
\[
[\Ee]=[\Ee_1' \oplus \Ll_2] \in \mathbf{U}_C(\mathbf{w},1,0)_{(1,0)}.
\]
Therefore, as in the case $\tau=0$, we get
\[
\mathbf{U}_C(\mathbf{w},(2,1),0)_{(1,0)} \cong \mathbf{M}_{C_1}(2,2g_1-2) \times \Pic^{g_2-1}(C_2).
\]
Similarly, for the case $\chi_2 \ge 1$, we get $[\Ee]=[\Ee_1 \oplus \Ll_2(-p)] \in \mathbf{U}_C(\mathbf{w},1,0)_{(0,1)}$.
In particular, we have 
\[
\mathbf{U}_C(\mathbf{w},(2,1),0)_{(\chi_1,\chi_2)} \cong \mathbf{M}_{C_1}(2,2g_1-2) \times \Pic^{g_2-1}(C_2)
\]
for any $(\chi_1,\chi_2) \in \{(0,0),(1,0),(0,1)\}$.
\noindent Then, we get the assertion by (\ref{decomp1}) with $r_0=1$.
\end{proof}

\begin{proposition}\label{(2,0)(0,0)}
The moduli space $\mathbf{U}_C(\mathbf{w},2,0)_{(0,0)}$ is isomorphic to $\mathbf{M}_{C_1}(2,2g_1-2) \times \mathbf{M}_{C_2}(2,2g_2-2)$ for any $\mathbf{w}$.
\end{proposition}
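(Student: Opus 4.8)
The plan is to use the numerical constraint $\chi_1+\chi_2=\tau$ from Remark \ref{rem333}, which on the component $(\chi_1,\chi_2)=(0,0)$ forces $\tau=0$. Hence the sequence $(\dagger)$ collapses to an isomorphism $\Ee\cong\Ee_1\oplus\Ee_2$ with $\delta(\Ee)=(0,2,2)$, where each $\Ee_i$ is a rank-two vector bundle on $C_i$ satisfying $\chi_i=0$, i.e. of degree $2g_i-2$; and since $w_1+w_2=1$ and $\chi(\Ee)=0$ we have $\mu_{\mathbf{w}}(\Ee)=0$. Two things then need to be established: first, that $\Ee_1\oplus\Ee_2$ is $\mathbf{w}$-semistable if and only if each $\Ee_i$ is semistable on $C_i$; and second, that $S_{\mathbf{w}}$-equivalence of such sheaves corresponds exactly to simultaneous $S$-equivalence on the two components.

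For the semistability equivalence I would argue both directions directly. For the forward implication, any subbundle $\Ll\subsetneq\Ee_1$ yields, through the summand inclusion $\Ee_1\hookrightarrow\Ee$, a subsheaf of $\Ee$ of multi-rank $(\rk\,\Ll,0)$; $\mathbf{w}$-semistability gives $\mu_{\mathbf{w}}(\Ll)\le 0$, that is $\chi(\Ll)\le 0$, which is precisely $\mu(\Ll)\le g_1-1=\mu(\Ee_1)$, so $\Ee_1$ (and symmetrically $\Ee_2$) is semistable. For the converse it suffices to test saturated subsheaves $\Ff\subset\Ee$, since passing to the depth-one saturation leaves the multi-rank unchanged while not decreasing $\chi$, hence not decreasing $\mu_{\mathbf{w}}$. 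Applying Lemma \ref{commute} to such an $\Ff$ produces $\Ff_i\subseteq\Ee_i$ and an exact sequence $0\to\Ff\to\Ff_1\oplus\Ff_2\to\Tt'\to 0$; semistability of each $\Ee_i$ forces $\chi(\Ff_i)\le 0$, whence $\chi(\Ff)=\chi(\Ff_1)+\chi(\Ff_2)-\length(\Tt')\le 0$ and $\mu_{\mathbf{w}}(\Ff)\le 0=\mu_{\mathbf{w}}(\Ee)$, as required.

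For the $S_{\mathbf{w}}$-equivalence I would refine the two-step filtration $0\subset\Ee_1\subset\Ee_1\oplus\Ee_2$, whose successive quotients $\Ee_1$ and $\Ee_2$ are $\mathbf{w}$-semistable of slope $0$, into a full Jordan--Hölder filtration; by Proposition \ref{propr0} the $\mathbf{w}$-stable factors of multi-rank $(r,0)$ (resp. $(0,r)$) are exactly the stable factors of $\Ee_1$ on $C_1$ (resp. of $\Ee_2$ on $C_2$). Thus $\mathfrak{gr}_{\mathbf{w}}(\Ee_1\oplus\Ee_2)\cong\mathfrak{gr}(\Ee_1)\oplus\mathfrak{gr}(\Ee_2)$, the first summand supported on $C_1$ and the second on $C_2$. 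Separating these by their supports, an isomorphism of graded sheaves is equivalent to $\mathfrak{gr}(\Ee_1)\cong\mathfrak{gr}(\Ee_1')$ together with $\mathfrak{gr}(\Ee_2)\cong\mathfrak{gr}(\Ee_2')$, i.e. $[\Ee_1]=[\Ee_1']$ in $\mathbf{M}_{C_1}(2,2g_1-2)$ and $[\Ee_2]=[\Ee_2']$ in $\mathbf{M}_{C_2}(2,2g_2-2)$. The assignment $([\Ee_1],[\Ee_2])\mapsto[\Ee_1\oplus\Ee_2]$ then gives a bijective morphism from $\mathbf{M}_{C_1}(2,2g_1-2)\times\mathbf{M}_{C_2}(2,2g_2-2)$ onto $\mathbf{U}_C(\mathbf{w},2,0)_{(0,0)}$, coming from the universal families and the GIT construction of \cite{seshadri}. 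I expect the main obstacle to lie in the $S_{\mathbf{w}}$-equivalence bookkeeping in the strictly semistable case---verifying that no Jordan--Hölder factor can mix the two components and that the graded object genuinely separates along $C_1$ and $C_2$---and in upgrading the resulting bijection to a scheme isomorphism, for which normality of the target together with Zariski's main theorem (or an explicit inverse) will be invoked.
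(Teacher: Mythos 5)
Your argument is correct and follows essentially the same route as the paper: force $\tau=0$ so that $\Ee\cong\Ee_1\oplus\Ee_2$, check $\mathbf{w}$-semistability of such direct sums componentwise, and identify $S_{\mathbf{w}}$-equivalence classes via $\mathfrak{gr}_{\mathbf{w}}(\Ee)\cong\mathfrak{gr}(\Ee_1)\oplus\mathfrak{gr}(\Ee_2)$. If anything you are more careful than the paper at one point: the paper asserts that every subsheaf $\Ff$ splits as $\Ff_1\oplus\Ff_2$ (which can fail at the node), whereas your use of Lemma \ref{commute} together with the inequality $\chi(\Ff)\le\chi(\Ff_1)+\chi(\Ff_2)$ is the cleaner way to run that step.
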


\begin{proof}
Let $\Ee$ be a sheaf in $\mathbf{U}_C(\mathbf{w},2,0)_{(0,0)}$. Then, $\Ee$ decomposes as $\Ee \cong \Ee_1 \oplus \Ee_2$, where each $\Ee_i$ is a vector bundle of rank $2$ and degree $2g_i-2$ on $C_i$. In particular, each $\Ee_i$ is semistable, i.e., $[\Ee_i] \in \mathbf{M}_{C_i}(2,2g_i-2)$. Define a map
\[
\psi: \mathbf{U}_C(\mathbf{w},2,0)_{(0,0)} \rightarrow \mathbf{M}_{C_1}(2,2g_1-2) \times \mathbf{M}_{C_2}(2,2g_2-2)
\]
by sending $[\Ee]$ to $([\mathfrak{gr}(\Ee_1)], [\mathfrak{gr}(\Ee_2)])$. Clearly, the map $\psi$ is well-defined and injective. Now for the surjectivity, pick a pair $([\Ee_1], [\Ee_2]) \in \mathbf{M}_{C_1}(2,2g_1-2) \times \mathbf{M}_{C_1}(2,2g_2-2)$, and set $\Ee = \Ee_1 \oplus \Ee_2$. For any subsheaf $\Ff$, we have $\Ff \cong \Ff_1 \oplus \Ff_2$ with each $\Ff_i=\Ee_i \cap \Ff$. By Proposition \ref{propr0}, each $\Ee_i$ is $\mathbf{w}$-semistable, and so $\chi(\Ff_i) \le 0$. From the following inequality 
\[
\mu_\mathbf{w}(\Ff) = \frac{\chi(\Ff_1) + \chi(\Ff_2)}{\mathrm{rank}_{\mathbf{w}}(\Ff)} \leq 0 = \mu_\mathbf{w}(\Ee),
\]
we get that $\Ee$ is $\mathbf{w}$-semistable, and the assertion follows.
\end{proof}

\begin{proposition}\label{(2,0)(0,1)}
For any polarization $\mathbf{w}$, we have
\[
\mathbf{U}_C(\mathbf{w},2,0)_{(0,1)} = \mathbf{U}_C(\mathbf{w},2,0)_{(1,0)}=\mathbf{U}_C(\mathbf{w},2,0)_{(0,0)}.
\]
\end{proposition}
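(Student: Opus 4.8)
The plan is to follow the template of Propositions \ref{pprop22} and \ref{p555}: starting from a $\mathbf{w}$-semistable sheaf $\Ee$ whose class lies in $\mathbf{U}_C(\mathbf{w},2,0)_{(1,0)}$, I would produce a short exact sequence separating the two components of $C$ and read off that the $S_\mathbf{w}$-equivalence class of $\Ee$ is that of a direct sum of multi-characteristic $(0,0)$. Combined with the irreducibility from Proposition \ref{(2,0)(0,0)} and the symmetry $C_1 \leftrightarrow C_2$, this yields all three equalities.

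A class in $\mathbf{U}_C(\mathbf{w},2,0)_{(1,0)}$ satisfies $\chi_1 = 1$, $\chi_2 = 0$, hence $\tau = \chi_1 + \chi_2 = 1$, and $\Ee$ sits in
\[
0 \to \Ee \to \Ee_1 \oplus \Ee_2 \xrightarrow{\theta_1} \Oo_p \to 0,
\]
with $\Ee_i$ a rank-two bundle on $C_i$ of characteristic $\chi_i$. The first step is to show that the induced map $\iota_1 \colon \Ee_1 \to \Oo_p$ is surjective: since $\Oo_p$ is simple, otherwise $\iota_1 = 0$, and then $\Ee_1$ would embed in $\Ee = \ker(\theta_1)$ with $\mu_{\mathbf{w}}(\Ee_1) = 1/(2w_1) > 0 = \mu_{\mathbf{w}}(\Ee)$, contradicting $\mathbf{w}$-semistability. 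Setting $\Ee_1' := \ker(\iota_1)$, an \emph{elementary modification} of $\Ee_1$ at $p$ with $\chi(\Ee_1') = 0$, the composite $\Ee_1' \hookrightarrow \Ee_1 \hookrightarrow \Ee_1 \oplus \Ee_2$ factors through $\ker(\theta_1) = \Ee$; computing the kernel and image of $\rho_2 \colon \Ee \to \Ee_2$ (which is onto precisely because $\iota_1$ is) then yields
\[
0 \to \Ee_1' \to \Ee \xrightarrow{\rho_2} \Ee_2 \to 0.
\]

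The second step is to check that both $\Ee_1'$ and $\Ee_2$ are semistable on their respective components, each of $\mathbf{w}$-slope $0$. For $\Ee_2$ I would use the quotient characterization of Remark \ref{propsemi}(b): every depth-one quotient of $\Ee_2$ is one of $\Ee$, hence has $\mathbf{w}$-slope $\ge 0$, which forces ordinary slope $\ge g_2 - 1 = \mu(\Ee_2)$ on $C_2$. For $\Ee_1'$ I would instead test subsheaves, since a subsheaf of $\Ee_1' \subset \Ee$ has $\mathbf{w}$-slope $\le 0$, giving $\mu \le g_1 - 1 = \mu(\Ee_1')$. As all three $\mathbf{w}$-slopes coincide, the displayed sequence refines to a Jordan--Hölder filtration, and since sheaves supported on a single component are $\mathbf{w}$-stable exactly when stable there (Proposition \ref{propr0}), we get $\mathfrak{gr}_{\mathbf{w}}(\Ee) \cong \mathfrak{gr}(\Ee_1') \oplus \mathfrak{gr}(\Ee_2)$. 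Hence $[\Ee] = [\Ee_1' \oplus \Ee_2]$ has multi-characteristic $(0,0)$ and lies in $\mathbf{U}_C(\mathbf{w},2,0)_{(0,0)}$.

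I expect the only delicate point to be the final bookkeeping with $S_\mathbf{w}$-equivalence and the definition of the components indexed by multi-characteristic. Having shown that every $(1,0)$-class is a point of the irreducible space $\mathbf{U}_C(\mathbf{w},2,0)_{(0,0)} \cong \mathbf{M}_{C_1}(2,2g_1-2) \times \mathbf{M}_{C_2}(2,2g_2-2)$, I must conclude that the irreducible component \emph{containing} the $(1,0)$-classes equals that component, rather than being merely contained in it; this is immediate from its irreducibility. Exchanging the roles of $C_1$ and $C_2$ gives the analogous statement for $(0,1)$, so all three components coincide. The surjectivity verifications for $\iota_1$ and $\rho_2$ and the slope inequalities are routine, being formally identical to the rank $(2,1)$ computation in Proposition \ref{p555}.
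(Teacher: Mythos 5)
Your first half — showing that every class with multi-characteristic $(1,0)$ or $(0,1)$ lands in $\mathbf{U}_C(\mathbf{w},2,0)_{(0,0)}$ via surjectivity of $\iota_1$, the elementary modification $\Ee_1'=\ker(\iota_1)$, the exact sequence $0\to\Ee_1'\to\Ee\to\Ee_2\to 0$, and the identification $\mathfrak{gr}_{\mathbf{w}}(\Ee)\cong\mathfrak{gr}(\Ee_1')\oplus\mathfrak{gr}(\Ee_2)$ — is exactly the paper's argument (run with the roles of $C_1$ and $C_2$ exchanged), and it is fine.

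The gap is the reverse inclusion, which you dismiss as ``immediate from irreducibility.'' It is not. The locus $\mathbf{U}_C(\mathbf{w},2,0)_{(0,0)}\cong\mathbf{M}_{C_1}(2,2g_1-2)\times\mathbf{M}_{C_2}(2,2g_2-2)$ is \emph{not} an irreducible component of the full moduli space (it has dimension $4g-6$, resp.\ $4g-5$ or $4g-4$ when some $g_i=1$, whereas the moduli space has dimension $4g-3$; indeed by Proposition \ref{(2,0)(1,1)} it sits inside the closure of the stable locus). So knowing that the $(1,0)$-classes form a subset of an irreducible variety does not force that subset to be the whole variety: a priori only special polystable classes $[\Ee_1\oplus\Ee_2']$ might arise as graded objects of $(1,0)$-sheaves. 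To get the stated \emph{equality} you must show that every class $[\Ee_1'\oplus\Ee_2']$ with $[\Ee_i']\in\mathbf{M}_{C_i}(2,2g_i-2)$ actually occurs as the class of some $\mathbf{w}$-semistable sheaf of multi-characteristic $(1,0)$. The paper does this constructively: it takes a nontrivial extension $0\to\Ee_2'\to\Ee'\to\Oo_p\to 0$ and proves that a general extension class yields a semistable $\Ee'\in\mathbf{M}_{C_2}(2,2g_2-1)$ — which requires a genuine case analysis when $\Ee_2'$ is strictly semistable, identifying the two extension classes $[1:0],[0:1]\in\PP\,\mathrm{Ext}^1_{C_2}(\Oo_p,\Ll_2\oplus\Ll_2')\cong\PP^1$ that must be avoided — and then glues to produce the desired sheaf. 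This converse construction is not a formality; it is reused later (Lemma \ref{ker(0,0)}, Remark \ref{rrmml}, and the proof of Theorem \ref{main} all depend on it), and your proposal omits it entirely.
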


\begin{proof}
Without loss of generality, it is enough to show that $\mathbf{U}_C(\mathbf{w},2,0)_{(0,1)} =\mathbf{U}_C(\mathbf{w},2,0)_{(0,0)}$. Let $\Ee$ be a sheaf in $\mathbf{U}_C(\mathbf{w},2,0)_{(0,1)}$ with the following exact sequence ($\dagger$): 
\[
0 \to \Ee \to \Ee_1 \oplus \Ee_2 \to \Oo_{p} \to 0
\]
with a vector bundle $\Ee_i$ of rank two on $C_i$ and $(\chi_1, \chi_2)=(0,1)$. If the composition map $\iota_2 : \Ee_2 \hookrightarrow \Ee_1 \oplus \Ee_2 \twoheadrightarrow \Oo_p$ is not surjective, then $\Ee_2$ would be a subsheaf of $\Ee$, contradicting the $\mathbf{w}$-semistability of $\Ee$. Thus the composition map $\iota_2$ is surjective, and set $\Ee_2' = \ker(\iota_2)$. Then we obtain an exact sequence
\[
0 \to \Ee_2' \to \Ee \to \Ee_1 \to 0.
\]
Moreover, we get $\chi(\Ee_2')=\chi_2-1=0$, and from $\mu_\mathbf{w}(\Ee)=\mu_\mathbf{w}(\Ee_2')=0$ it follows that $\Ee_2'$ is also $\mathbf{w}$-semistable. By Proposition \ref{propr0}, $\Ee_2'$ is semistable as a sheaf on $C_2$, i.e., $[\Ee_2'] \in \mathbf{M}_{C_2}(2,2g_2-2)$. Similarly, we also get $[\Ee_1] \in \mathbf{M}_{C_1}(2,2g_1-2)$. This implies that the Jordan-Hölder filtration of $\Ee$ is given by
\[
\mathfrak{gr}_\mathbf{w}(\Ee) \cong \mathfrak{gr}(\Ee_1) \oplus \mathfrak{gr}(\Ee_2').
\]
Thus, by Proposition \ref{(2,0)(0,0)}, every sheaf in $\mathbf{U}_C(\mathbf{w},2,0)_{(0,1)}$ is contained in $\mathbf{U}_C(\mathbf{w},2,0)_{(0,0)}$.

Conversely, let $[\Ee_1' \oplus \Ee_2'] \in \mathbf{U}_C(\mathbf{w}, 2, 0)_{(0,0)}$ with $[\Ee_i']\in \mathbf{M}_{C_i}(2,2g_i-2)$ by Proposition \ref{(2,0)(0,0)}. Let $\Ee'$ be a nontrivial extension 
\begin{equation}\label{eerrr1}
0 \to \Ee_2' \to \Ee' \to \Oo_p \to 0
\end{equation}
of $\chi(\Ee')=1$, which is represented by a general element in $\PP \mathrm{Ext}_{C_2}^1(\Oo_p, \Ee_2')\cong  \PP^1$. Indeed, if $\Ee_2'\in \mathbf{M}_{C_2}(2,2g_2-2)$ is stable, then $\Ee'$ is also stable for any element in the extension family. Assume that $\Ee_2'$ is strictly semistable, say $[\Ee_2']=[\Ll_2\oplus \Ll_2']$ with $\Ll_2, \Ll_2' \in \Pic^{g_2-1}(C_2)$. Then $\Ee'$ is expressed in terms of coordinates
\[
[\epsilon_0:\epsilon_1] \in \PP \mathrm{Ext}_{C_2}^1(\Oo_p, \Ll_2\oplus \Ll_2')\cong \PP \left(\mathrm{H}^0(\Ll_2^\vee \otimes \Oo_p)^\vee \oplus \mathrm{H}^0(\Ll_2'^\vee \otimes \Oo_p)^\vee \right)\cong \PP^1.
\]
If $\Ll\subset \Ee'$ is a destabilizing line bundle with $\deg (\Ll)\ge g_2$, then the composition map $\upsilon: \Ll\rightarrow \Ee' \rightarrow \Oo_p$ is surjective; otherwise, $\Ll$ would be a subsheaf of $\Ll_2\oplus \Ll_2'$, impossible. Then $\Ll(-p)\cong \mathrm{ker}(\upsilon)$ is a subsheaf of $\Ll_2\oplus \Ll_2'$ and so we get $\Ll\cong \Ll_2(p)$ or $\Ll_2'(p)$. In the former case, we get that $\Ee'$ is an extension of $\Ll_2'$ by $\Ll_2(p)$, which is trivial. Similarly, in the latter case, we get $\Ee' \cong \Ll_2\oplus \Ll_2'(p)$. In particular, the coordinates $[\epsilon_0, \epsilon_1] \not\in \{ [0:1], [1:0]\}$ if and only if $\Ee'$ is semistable, i.e., $[\Ee']\in \mathbf{M}_{C_2}(2,2g_2-1)$. For any such $\Ee'$, set $\Ee:=\Ee_1' \oplus \Ee'$. Then $\Ee\in \mathbf{U}_C(\mathbf{w}, 2,0)_{(0,1)}$ maps to $[\Ee_1'\oplus \Ee_2']$, and the assertion follows. 
\end{proof}
%Note that $\Ee'$ is parametrized by ${\rm{Ext}}^1(\Oo_p, (\Ll_2 \oplus \Ll_2'))$, and $\dim{\rm{Ext}}^1(\Oo_p, (\Ll_2 \oplus \Ll_2'))=1$. Hence we can choose a non-trivial extension class corresponding to a sheaf $\Ee'$ that fits into the exact sequence:
%\[
%0 \to \Ee \to (\Ll_1\oplus \Ll_1') \oplus \Ee' \to \Oo_p \to 0.
%\]
%From the construction, we conclude that the multi-degree of $\Ee$ is $(0,1)$. Suppose that $\Ee$ has a destabilizing subsheaf $\Ff$ such that $\mu_\mathbf{w}(\Ff) > 0$. Consider the following composition map in the exact sequence:
%\[
%\Ff \hookrightarrow \Ee \to (\Ll_1 \oplus \Ll_1') \oplus \Ee' \to \Oo_p.
%\]
%+++ If the composition map $\Ff \hookrightarrow \Ee \to (\Ll_1 \oplus \Ll_1') \oplus \Ee'$ is not surjective, then $\\Ff$ would be a subsheaf of $\Ee$, contradicting the assumption that $\Ee$ is $\mathbf{w}$-semistable. On the other hand, if the composition map is surjective, then any destabilizing subsheaf would contradict the semistability of $(\Ll_1 \oplus \Ll_1')$ and $\Ee'$, both of which are semistable constructed to be semistable. Hence, no destabilizing subsheaf exists in $\Ee$, and so $\Ee$ is $\mathbf{w}$-semistable. Therefore, by constructing $\Ee$ as above, we have $\Ee \in \mathbf{U}_C(\mathbf{w},2,0)_{(0,1)}$.

%Thus we get the assertion. 

\begin{remark}\label{rrmml}
From the argument in the proof of Proposition \ref{(2,0)(0,1)}, one can obtain a rational map 
\[
\PP \mathrm{Ext}_{C_2}^1(\Oo_p, \Ee_2') \cong \PP^1 \dashrightarrow \mathbf{M}_{C_2}(2,2g_2-1),
\]
which must be a morphism. If $C_2$ is an elliptic curve and so $\mathbf{M}_{C_2}(2,1)\cong C_2$, then by the Hurwitz formula the morphism must be a constant map. In particular, the vector bundle $\Ee'$ in (\ref{eerrr1}) is unique, which is not true for $g_2\ge 2$.
\end{remark}

\begin{proposition}\label{(2,0)(0,2)}
For any polarization $\mathbf{w}$, we have
\[
\mathbf{U}_C(\mathbf{w},2,0)_{(0,2)} = \mathbf{U}_C(\mathbf{w},2,0)_{(2,0)}=\mathbf{U}_C(\mathbf{w},2,0)_{(0,0)}.
\]
\end{proposition}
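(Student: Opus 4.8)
The plan is to mimic the structure of the proof of Proposition \ref{(2,0)(0,1)}, now handling the case where the torsion quotient has length $\tau=2$ instead of $\tau=1$. By symmetry it suffices to prove $\mathbf{U}_C(\mathbf{w},2,0)_{(0,2)} = \mathbf{U}_C(\mathbf{w},2,0)_{(0,0)}$, so let me take $\Ee \in \mathbf{U}_C(\mathbf{w},2,0)_{(0,2)}$ with its sequence $(\dagger)$:
\[
0 \to \Ee \to \Ee_1 \oplus \Ee_2 \to \Oo_p^{\oplus 2} \to 0,
\]
where $(\chi_1,\chi_2)=(0,2)$, so $\chi(\Ee_1)=0$ and $\chi(\Ee_2)=2$, i.e. $\deg(\Ee_1)=2g_1-2$ and $\deg(\Ee_2)=2g_2$. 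First I would analyze the composition map $\iota_2 : \Ee_2 \to \Oo_p^{\oplus 2}$. If $\iota_2$ has rank $0$ or $1$ as a map of fibers at $p$, then the kernel of $\iota_2$ (equivalently a suitable subsheaf) would violate $\mathbf{w}$-semistability: a non-surjective $\iota_2$ forces a subsheaf of $\Ee$ of too-large characteristic, exactly as in the $\tau=1$ argument. Hence $\iota_2$ must be surjective onto $\Oo_p^{\oplus 2}$, and setting $\Ee_2' = \ker(\iota_2)$ gives an exact sequence
\[
0 \to \Ee_2' \to \Ee \to \Ee_1 \to 0
\]
with $\chi(\Ee_2') = \chi_2 - 2 = 0$, so $\deg(\Ee_2') = 2g_2-2$.

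Next I would apply Remark \ref{propsemi}(c) together with Proposition \ref{propr0}: since $\mu_\mathbf{w}(\Ee)=\mu_\mathbf{w}(\Ee_2')=0$, the subsheaf $\Ee_2'$ is $\mathbf{w}$-semistable, hence semistable on $C_2$, giving $[\Ee_2'] \in \mathbf{M}_{C_2}(2,2g_2-2)$; and $\Ee_1$ is likewise semistable with $[\Ee_1] \in \mathbf{M}_{C_1}(2,2g_1-2)$. The Jordan-Hölder graded object is then $\mathfrak{gr}_\mathbf{w}(\Ee) \cong \mathfrak{gr}(\Ee_1) \oplus \mathfrak{gr}(\Ee_2')$, which by Proposition \ref{(2,0)(0,0)} is a point of $\mathbf{U}_C(\mathbf{w},2,0)_{(0,0)}$. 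This gives the inclusion $\mathbf{U}_C(\mathbf{w},2,0)_{(0,2)} \subseteq \mathbf{U}_C(\mathbf{w},2,0)_{(0,0)}$.

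For the reverse inclusion I would start from $[\Ee_1' \oplus \Ee_2'] \in \mathbf{U}_C(\mathbf{w},2,0)_{(0,0)}$ with $[\Ee_i'] \in \mathbf{M}_{C_i}(2,2g_i-2)$ and build a $\mathbf{w}$-semistable $\Ee$ with $\delta$-type forcing $\tau=2$. The natural construction is a two-step Hecke modification: choose a surjection $\Ee_2' \dashrightarrow \Oo_p^{\oplus 2}$, or equivalently realize $\Ee$ as an extension with $\chi(\Ee)=2$ built from $\Ee_2'$, parametrized by an open subset of the relevant projective space $\PP\mathrm{Ext}^1_{C_2}(\Oo_p^{\oplus 2}, \Ee_2')$, then set $\Ee := \Ee_1' \oplus \Ee'$ where $\Ee' \in \mathbf{M}_{C_2}(2,2g_2)$ is the resulting elementary modification. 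The main obstacle I anticipate is precisely this reverse direction: one must verify that a \emph{generic} such modification is semistable (the destabilizing line subbundles can be analyzed as in the $\tau=1$ proof, but now two points of length one are glued, so rank-one degenerations of the gluing map need separate treatment) and that the $S_\mathbf{w}$-class recovers the chosen $\mathfrak{gr}(\Ee_1') \oplus \mathfrak{gr}(\Ee_2')$. I expect that bounding the destabilizing subsheaves via $\dim\mathrm{Ext}^1$ computations (Remark \ref{genus}(b)) and the explicit coordinate description of the extension locus, exactly paralleling the end of the proof of Proposition \ref{(2,0)(0,1)}, will close this gap.
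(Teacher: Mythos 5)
Your forward inclusion is exactly the paper's argument: if $\iota_2$ is not surjective then $\ker(\iota_2)$ is a subsheaf of $\Ee$ with $\chi\ge 1$, contradicting $\mathbf{w}$-semistability; hence $\Ee_2'=\ker(\iota_2)$ exists with $\chi(\Ee_2')=0$, is $\mathbf{w}$-semistable because it has the same $\mathbf{w}$-slope as $\Ee$, hence semistable on $C_2$ by Proposition \ref{propr0}, and $\mathfrak{gr}_\mathbf{w}(\Ee)\cong\mathfrak{gr}(\Ee_1)\oplus\mathfrak{gr}(\Ee_2')$ lands in $\mathbf{U}_C(\mathbf{w},2,0)_{(0,0)}$ by Proposition \ref{(2,0)(0,0)}. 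No issues there.

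The reverse inclusion is where you leave a genuine gap: you identify the right construction (a length-two elementary modification of $\Ee_2'$ at $p$) but defer the semistability of the modified bundle to an anticipated genericity and $\dim\mathrm{Ext}^1$ analysis modeled on the $\tau=1$ case. That analysis is not needed, because the length-two modification is rigid rather than generic: any extension class in $\mathrm{Ext}^1_{C_2}(\Oo_p^{\oplus 2},\Ee_2')\cong\Hom(\CC^2,\Ee_{2,p}')$ whose middle term is locally free corresponds to an invertible $2\times 2$ matrix, and the middle term is then canonically $\Ee_2'(p)$. The paper therefore simply sets $\Ee_2:=\Ee_2'(p)\in\mathbf{M}_{C_2}(2,2g_2)$ --- semistable automatically, since twisting by a line bundle preserves semistability --- takes $\sigma_2$ to be the evaluation $\Ee_2'(p)\to\Ee_2'(p)\otimes\Oo_p\cong\Oo_p^{\oplus 2}$, whose kernel is exactly $\Ee_2'$, and observes that $\ker\bigl((0,\sigma_2):\Ee_1'\oplus\Ee_2\to\Oo_p^{\oplus 2}\bigr)=\Ee_1'\oplus\Ee_2'$. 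The destabilizing-subbundle discussion in Proposition \ref{(2,0)(0,1)} was forced only because a length-one modification genuinely varies in a $\PP^1$ of choices and can degenerate; here there is nothing to choose. Inserting this observation closes your gap and makes your proof coincide with the paper's.
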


\begin{proof}
Without loss of generality, it is enough to show that $\mathbf{U}_C(\mathbf{w},2,0)_{(0,2)} =\mathbf{U}_C(\mathbf{w},2,0)_{(0,0)}$. Let $\Ee$ be a sheaf in $\mathbf{U}_C(\mathbf{w},2,0)_{(0,2)}$ with the exact sequence ($\dagger$)
\[
0 \to \Ee \to \Ee_1 \oplus \Ee_2 \to \Oo_{p}^{\oplus 2} \to 0,
\]
 with a vector bundle $\Ee_i$ of rank two on $C_i$ and $(\chi_1, \chi_2)=(0,2)$. If the composition map $\iota_2 : \Ee_2 \hookrightarrow \Ee_1 \oplus \Ee_2 \twoheadrightarrow \Oo_p^{\oplus 2}$ is not a surjection, then $\ker (\iota_2)$ is a subsheaf of $\Ee$ with Euler characteristic at least $1$, contradicting the $\mathbf{w}$-semistability of $\Ee$. Thus $\iota_2$ is surjective, and we get an exact sequence
\[
0 \to \Ee_2':=\ker (\iota_2) \to \Ee \to \Ee_1 \to 0,
\]
where $\Ee_2'$ has degree $2g_2-2$. Note from $\mu_\mathbf{w}(\Ee)=\mu_\mathbf{w}(\Ee_2')=0$ that $\Ee_2'$ is also $\mathbf{w}$-semistable. By Proposition \ref{propr0}, $\Ee_2'$ is semistable as a sheaf on $C_2$, i.e., $[\Ee_2'] \in \mathbf{M}_{C_2}(2,2g_2-2)$. Similarly, using Remark \ref{propsemi} (b), we also get $[\Ee_1] \in \mathbf{M}_{C_1}(2,2g_1-2)$. This implies that the Jordan-Hölder filtration of $\Ee$ is given by
\[
\mathfrak{gr}_\mathbf{w}(\Ee) \cong \mathfrak{gr}(\Ee_1) \oplus \mathfrak{gr}(\Ee_2').
\]
Thus, by Proposition \ref{(2,0)(0,0)}, every sheaf in $\mathbf{U}_C(\mathbf{w},2,0)_{(0,2)}$ is contained in $\mathbf{U}_C(\mathbf{w},2,0)_{(0,0)}$.

Conversely, let $[\Ee_1'\oplus \Ee_2'] \in \mathbf{U}_C(\mathbf{w},2,0)_{(0,0)}$ for $[\Ee_i'] \in \mathbf{M}_{C_i}(2,2g_i-2)$ by Proposition \ref{(2,0)(0,0)}. For $\Ee_2:= \Ee_2'(p) \in \mathbf{M}_{C_2}(2,2g_2)$, consider a surjection $\sigma_2:\Ee_2 \rightarrow \Oo_p^{\oplus 2}$ so that we have an exact sequence
\[
0 \to \Ee_2' \to \Ee_2 \to \Oo_p^{\oplus2} \to 0.
\]
Then, for the map $(0,\sigma_2):\Ee_1' \oplus \Ee_2 \rightarrow \Oo_p^{\oplus2}$, we get $\ker(0,\sigma_2)=\Ee_1' \oplus \Ee_2'$ and the assertion follows.
\end{proof}

\begin{proposition}\label{(2,0)(1,1)}
The locus $\mathbf{U}_C(\mathbf{w},2,0)_{(0,0)}$ is contained in $\mathbf{U}_C(\mathbf{w}, 2, 0)_{(1,1)}$. In particular, we have
\[
\mathbf{U}_C(\mathbf{w}, 2, 0)=\mathbf{U}_C(\mathbf{w}, 2, 0)_{(1,1)}.
\]
\end{proposition}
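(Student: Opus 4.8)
The plan is to derive the stated identity from the single inclusion $\mathbf{U}_C(\mathbf{w},2,0)_{(0,0)} \subseteq \mathbf{U}_C(\mathbf{w},2,0)_{(1,1)}$. Applying the decomposition (\ref{decomp1}) with $\vec r=(2,2)$, hence $r_0=2$, the space $\mathbf{U}_C(\mathbf{w},2,0)$ is the union of the six loci indexed by $(\chi_1,\chi_2)$ with $\chi_i\ge 0$ and $\chi_1+\chi_2\le 2$. Propositions \ref{(2,0)(0,1)} and \ref{(2,0)(0,2)} identify the loci $(1,0),(0,1),(2,0),(0,2)$ with $(0,0)$, so $\mathbf{U}_C(\mathbf{w},2,0)=\mathbf{U}_C(\mathbf{w},2,0)_{(0,0)}\cup \mathbf{U}_C(\mathbf{w},2,0)_{(1,1)}$; granting the inclusion, this union is $\mathbf{U}_C(\mathbf{w},2,0)_{(1,1)}$, as asserted. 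Since $\mathbf{U}_C(\mathbf{w},2,0)_{(1,1)}$ is by definition an irreducible component of the projective moduli space, hence closed, it is enough to realise each class of the $(0,0)$-locus as a limit, inside the moduli space, of genuinely $(1,1)$ sheaves.

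Fix $[\Ee_1'\oplus \Ee_2']$ in $\mathbf{U}_C(\mathbf{w},2,0)_{(0,0)}$ with $\Ee_i'\in \mathbf{M}_{C_i}(2,2g_i-2)$, as in Proposition \ref{(2,0)(0,0)}. For each $i$ choose a stable elementary up-modification $V_i\in \mathbf{M}_{C_i}(2,2g_i-1)$ fitting in $0\to \Ee_i'\to V_i\to \Oo_{q_i}\to 0$; as $2g_i-1$ is odd, semistability forces stability, and $\Ee_i'=\{s\in V_i: s(q_i)\in \ell_i\}$ for the line $\ell_i=\ker\bigl(V_{i,q_i}\to (V_i/\Ee_i')_{q_i}\bigr)$. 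Identifying the target $\Oo_p^{\oplus 2}$ with the skyscraper sheaf at $p$ with fibre $V_{1,q_1}$, take homomorphisms $f_t\colon V_{2,q_2}\to V_{1,q_1}$, $t\in \AA^1$, with $f_t$ an isomorphism for $t\ne 0$ and $f_0$ of rank one satisfying $\im f_0=\ell_1$ and $\ker f_0=\ell_2$ (e.g. $f_t=f_0+t\,g$ for generic $g$), and set
\[
\Ee_t:=\ker\bigl(V_1\oplus V_2 \xrightarrow{(\mathrm{id},\,-f_t)} \Oo_p^{\oplus 2}\bigr)=\{(s_1,s_2): s_1(q_1)=f_t(s_2(q_2))\}.
\]
As the quotient $\Oo_p^{\oplus 2}$ is constant and the maps remain surjective, $\{\Ee_t\}_{t\in \AA^1}$ is a flat family of depth-one sheaves with $\chi(\Ee_t)=0$.

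For $t\ne 0$ the gluing $f_t$ is an isomorphism, so $\Ee_t$ is a rank-two vector bundle on $C$ with $\Ee_t|_{C_i}=V_i$, giving $(\chi_1,\chi_2)=(1,1)$; moreover $\Ee_t$ is $\mathbf{w}$-stable. Indeed, a proper saturated subsheaf $\Ff$ restricts on each $C_i$ to a subsheaf of the stable bundle $V_i$ of slope $g_i-\tfrac12$, and the node-gluing relation linking $\Ff|_{C_1}$ and $\Ff|_{C_2}$ forces $\chi(\Ff)<0$ in every multi-rank case, so $\mu_{\mathbf{w}}(\Ff)<0=\mu_{\mathbf{w}}(\Ee_t)$; equivalently every proper quotient has positive slope. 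Thus $[\Ee_t]\in \mathbf{U}_C(\mathbf{w},2,0)_{(1,1)}$ for $t\ne 0$. At $t=0$ the rank-one gluing yields $\Ee_0|_{C_2}=V_2$ and $\Ee_0|_{C_1}=\{s_1: s_1(q_1)\in \im f_0=\ell_1\}=\Ee_1'$, so $(\chi_1,\chi_2)=(0,1)$, while the kernel of the restriction $\Ee_0\to \Ee_0|_{C_1}$ equals $\{s_2\in V_2: s_2(q_2)\in \ker f_0=\ell_2\}=\Ee_2'$; by the analysis of Proposition \ref{(2,0)(0,1)} the graded sheaf of $\Ee_0$ is therefore $\mathfrak{gr}(\Ee_1')\oplus \mathfrak{gr}(\Ee_2')$ and $[\Ee_0]=[\Ee_1'\oplus \Ee_2']$. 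The flat family induces a morphism $\AA^1\to \mathbf{U}_C(\mathbf{w},2,0)$ whose image is irreducible and lies in $\mathbf{U}_C(\mathbf{w},2,0)_{(1,1)}$ over the dense open $t\ne 0$; the whole image, in particular $[\Ee_0]=[\Ee_1'\oplus \Ee_2']$, therefore lies in the closed set $\mathbf{U}_C(\mathbf{w},2,0)_{(1,1)}$. As the class was arbitrary, the inclusion follows.

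The step that needs the most care is the stability of the generic member $\Ee_t$ ($t\ne 0$): this is where the argument could fail, and it requires checking that, for every multi-rank $(r_1,r_2)$ with $0\le r_i\le 2$, a saturated subsheaf of $\Ee_t$ has negative Euler characteristic once the gluing across $p$ is taken into account -- a routine but case-by-case slope estimate using the stability of $V_1$ and $V_2$. Two further points should be verified: that each semistable $\Ee_i'$ admits a stable up-modification $V_i$ (dually, a general down-modification of the semistable $\Ee_i'^{\vee}$ is stable, which also settles the elliptic cases $g_i=1$, where $\mathbf{M}_{C_i}(2,2g_i-1)\cong C_i$); and the local computation at the node identifying the flat limit $\Ee_0$ and its graded sheaf, which reduces to the choices $\im f_0=\ell_1$, $\ker f_0=\ell_2$ together with Proposition \ref{(2,0)(0,1)}.
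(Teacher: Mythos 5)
Your proof is correct and follows essentially the same strategy as the paper: realise $[\Ee_1'\oplus\Ee_2']$ as the kernel of a surjection $V_1\oplus V_2\to\Oo_p^{\oplus 2}$ with $V_i\in\mathbf{M}_{C_i}(2,2g_i-1)$ obtained by elementary modification, so that it sits inside the family whose generic member has multi-characteristic $(1,1)$. The paper's own proof merely exhibits the block-diagonal surjection $\phi=\mathrm{diag}(\phi_1,\phi_2)$ and asserts the conclusion, leaving the degeneration implicit; your explicit one-parameter family $f_t$, together with the closedness of the irreducible component $\mathbf{U}_C(\mathbf{w},2,0)_{(1,1)}$ and the identification of the flat limit $[\Ee_0]=[\Ee_1'\oplus\Ee_2']$, supplies precisely the limiting argument the paper omits, so your version is if anything the more complete one.
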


\begin{proof}
Let $[\Ee_1'\oplus \Ee_2'] \in \mathbf{U}_C(\mathbf{w},2,0)_{(0,0)}$ for $[\Ee_i'] \in \mathbf{M}_{C_i}(2,2g_i-2)$ by Proposition \ref{(2,0)(0,0)}. Consider an elementary transformation
\[
0\to \Ee_i' \to \Ee_i \xrightarrow{\phi_i} \Oo_p \to 0
\]
with $\Ee_i\in \mathbf{M}_{C_i}(2,2g_i-1)$ as in the proof of Proposition \ref{(2,0)(0,1)}. Then we get an exact sequence
\[
0 \to \Ee_1'\oplus \Ee_2' \to \Ee':=\Ee_1 \oplus \Ee_2 \xrightarrow{\phi} \Oo_p^{\oplus2} \to 0
\]
with $\phi=\begin{pmatrix} \phi_1 & 0 \\ 0 & \phi_2 \end{pmatrix}$. Since the sheaf $\Ee'$ has $\mathrm{rank}(\Ee')=(2,2)$ and $\chi_1=\chi_2=1$ by construction, we get the first assertion. Now the second assertion follows from Propositions \ref{(2,0)(0,1)} and \ref{(2,0)(0,2)}. 
\end{proof}
%Now we assume that there exists a destabilizing line bundle $\Ll$ with $\deg(\Ll) \geq 1$. Consider the composition map
%\[
%\iota: \Ll \to \Ee'\to \Oo_p.
%\]
%If $\iota$ is not surjective, then $\Ll$ would be contained in $(\Ll_1 \oplus \Ll_1') \oplus (\Ll_2 \oplus \Ll_2')$, which is impossible. On the other hand, if $\iota$ is surjective, the kernel of the map $\Ee' \to \Oo_p^{\oplus2}$ contain $\Ll$. Therefore, $\Ee'$ belongs to $\mathbf{U}_C(\mathbf{w}, 2, 0)_{(1,1)}$.

\begin{lemma}\label{minus}
Each sheaf $\Ee \in \mathbf{U}_C(\mathbf{w}, 2, 0)\setminus \mathbf{U}_C(\mathbf{w}, 2, 0)_{(0,0)}$ is $\mathbf{w}$-stable for any polarization. Moreover, in the corresponding sequence $(\dagger)$ for $\Ee$, the vector bundle $\Ee_i$ is stable for each $i$, i.e., $\Ee_i\in \mathbf{M}_{C_i}(2,2g_i-1)$. 
\end{lemma}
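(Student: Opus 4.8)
The plan is to prove the two claims in order: first that a sheaf whose $S$-equivalence class lies outside $\mathbf{U}_C(\mathbf{w},2,0)_{(0,0)}$ is $\mathbf{w}$-stable, and then to deduce the stability of each $\Ee_i$ from that of $\Ee$. I would begin by pinning down the multi-characteristic. By (\ref{decomp1}) the only possible values of $(\chi_1,\chi_2)$ are $(0,0),(1,0),(0,1),(2,0),(0,2),(1,1)$, and Propositions \ref{(2,0)(0,1)} and \ref{(2,0)(0,2)} show that the loci for the middle four all coincide with $\mathbf{U}_C(\mathbf{w},2,0)_{(0,0)}$. Hence a representative $\Ee$ of a class outside $\mathbf{U}_C(\mathbf{w},2,0)_{(0,0)}$ must satisfy $(\chi_1,\chi_2)=(1,1)$, so that $\deg(\Ee_i)=2g_i-1$; being of odd degree, $\Ee_i$ is stable as soon as it is semistable, by Remark \ref{genus}(a-2). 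This reduction also fixes the sequence $(\dagger)$ as $0\to\Ee\to\Ee_1\oplus\Ee_2\xrightarrow{\theta_\tau}\Oo_p^{\oplus2}\to0$.

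For the stability of $\Ee$ I would argue by contraposition: assuming $\Ee$ strictly $\mathbf{w}$-semistable, I will show $[\Ee]\in\mathbf{U}_C(\mathbf{w},2,0)_{(0,0)}$. The crucial observation is that there is no $\mathbf{w}$-stable slope-zero sheaf of multi-rank $(1,1)$, $(2,1)$ or $(1,2)$: Proposition \ref{pprop22} exhibits every class in $\mathbf{U}_C(\mathbf{w},1,0)$ as strictly semistable, Proposition \ref{p555} shows every sheaf of multi-rank $(2,1)$ and characteristic zero is $S$-equivalent to a direct sum, and the multi-rank $(1,2)$ case follows by exchanging $C_1$ and $C_2$. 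Since the multi-ranks of the Jordan--H\"older factors of $\Ee$ sum to $(2,2)$ and each factor is $\mathbf{w}$-stable of slope zero, each factor must have multi-rank in $\{(1,0),(0,1),(2,0),(0,2)\}$, hence be a stable sheaf of characteristic zero supported on a single $C_i$. Grouping the factors by their supporting component yields $\mathfrak{gr}_{\mathbf{w}}(\Ee)\cong\mathfrak{gr}(\Ee_1')\oplus\mathfrak{gr}(\Ee_2')$ with $[\Ee_i']\in\mathbf{M}_{C_i}(2,2g_i-2)$ --- the characteristic count is immediate since every factor has $\chi=0$ and the multi-ranks add to $(2,2)$ --- and by Proposition \ref{(2,0)(0,0)} this forces $[\Ee]\in\mathbf{U}_C(\mathbf{w},2,0)_{(0,0)}$, a contradiction. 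Therefore $\Ee$ is $\mathbf{w}$-stable.

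Finally, I would derive the stability of $\Ee_i$ from that of $\Ee$. Suppose $\Ee_1$ is not semistable and let $\Ll\subset\Ee_1$ be a destabilizing line subbundle; since $\mu(\Ee_1)=g_1-\tfrac12$ one has $\deg(\Ll)\ge g_1$. I examine the composite $\Ll\hookrightarrow\Ee_1\xrightarrow{\iota_1}\Oo_p^{\oplus2}$, which factors through the one-dimensional fibre $\Ll\otimes k(q_1)$ and so has image of length at most one. If it vanishes, then $\Ll\subset\ker(\theta_\tau)=\Ee$ with $\mu_{\mathbf{w}}(\Ll)=(\deg(\Ll)+1-g_1)/w_1\ge 1/w_1>0$, contradicting stability. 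If the image has length one, then $\Ll(-q_1)=\ker(\Ll\to\Oo_p)$ is a subsheaf of $\Ee$ with $\mu_{\mathbf{w}}(\Ll(-q_1))=(\deg(\Ll)-g_1)/w_1\ge0=\mu_{\mathbf{w}}(\Ee)$, again contradicting stability. Thus $\Ee_1$ is semistable, hence stable, so $[\Ee_1]\in\mathbf{M}_{C_1}(2,2g_1-1)$; the symmetric argument gives $[\Ee_2]\in\mathbf{M}_{C_2}(2,2g_2-1)$.

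The hard part will be the stability of $\Ee$, and within it the structural claim that the Jordan--H\"older factors cannot have mixed support. This hinges entirely on the absence of $\mathbf{w}$-stable slope-zero sheaves of multi-rank $(1,1)$, $(2,1)$ and $(1,2)$, which is precisely what the classification Propositions \ref{pprop22} and \ref{p555} provide; granting this, the splitting of $\mathfrak{gr}_{\mathbf{w}}(\Ee)$ and the identification with the $(0,0)$-locus via Proposition \ref{(2,0)(0,0)} are formal, and the second assertion is then a short slope computation.
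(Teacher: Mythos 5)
Your proposal is correct and follows essentially the same route as the paper: the stability of $\Ee$ is deduced from the absence of $\mathbf{w}$-stable slope-zero sheaves of mixed multi-rank $(1,1)$, $(2,1)$, $(1,2)$ supplied by Propositions \ref{pprop22} and \ref{p555} (the paper phrases this via the top Jordan--H\"older quotient $\Ee/\Ff$ and then classifies $\Ff$, while you classify all factors at once and group by support, a cosmetic difference), and the stability of each $\Ee_i$ is obtained by the same slope computation on the kernel of $\Ll_i\hookrightarrow\Ee_1\oplus\Ee_2\twoheadrightarrow\Oo_p^{\oplus 2}$, whose image has length at most one.
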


\begin{proof}
Let $\Ee$ be a strictly $\mathbf{w}$-semistable sheaf in $\mathbf{U}_C(\mathbf{w}, 2, 0)$ with $\Ff$ the maximal subsheaf of $\Ee$ in the Jordan-Hölder filtration of $\Ee$. In particular, the quotient $\Ee/\Ff$ is $\mathbf{w}$-stable with $\mu_{\mathbf{w}}(\Ee/\Ff)=\mu_{\mathbf{w}}(\Ee)=0$. By Propositions \ref{pprop22} and \ref{p555}, we get that $\mathrm{rank}(\Ee/\Ff)$ is either $(r,0)$ or $(0,r)$ for some $r\in \{1,2\}$, say $(r,0)$. If $r=1$, then we have $\Ee/\Ff \cong \Ll_1$ for some $\Ll_1\in \mathrm{Pic}^{g_1-1}(C_1)$. Then by Proposition \ref{p555} we get $[\Ff] = [\Ll_1'\oplus\Ee_2']$ for $\Ll_1'\in \mathrm{Pic}^{g_1-1}(C_1)$ and $\Ee_2' \in \mathbf{M}_{C_2}(2,2g_2-2)$. Similarly, if $r=2$, then we have $\Ee/\Ff \cong \Ee_1'$ for some $\Ee_1'\in \mathbf{M}_{C_1}(2,2g_1-2)$. Then by Proposition \ref{pprop22} we get $[\Ff] \in \mathbf{M}_{C_2}(2,2g_2-2)$. Thus $[\Ee]\in \mathbf{U}_C(\mathbf{w}, 2, 0)_{(0,0)}$, and the first assertion follows. 

Now let $\Ee$ be a sheaf in $\Ee \in \mathbf{U}_C(\mathbf{w}, 2, 0)\setminus \mathbf{U}_C(\mathbf{w}, 2, 0)_{(0,0)}$, i.e., $\Ee$ is $\mathbf{w}$-stable and fits into the following exact sequence ($\dagger$):
\[
0 \to \Ee \to \Ee_1 \oplus \Ee_2 \to \Oo_{p}^{\oplus 2} \to 0,
\]
where each $\Ee_i$ is a vector bundle of rank two on $C_i$ with $\chi_i=1$ by Proposition \ref{(2,0)(1,1)}. If $\Ee_i$ is not stable, then there exists a line subbundle $\Ll_i$ with $\deg(\Ll_i) \ge g_i$. Then the kernel of the composition map $\Ll_i \hookrightarrow \Ee_1 \oplus \Ee_2 \twoheadrightarrow \Oo_p^{\oplus2}$ has Euler characteristic at least $0$, and it injects into $\Ee$ as a subsheaf, violating the $\mathbf{w}$-stability of $\Ee$. Thus we can conclude that both $\Ee_1$ and $\Ee_2$ are stable. 
\end{proof}

\begin{lemma}\label{ker(0,0)}
For vector bundles $\Ee_i'$ with $[\Ee_i'] \in \mathbf{M}_{C_i}(2,2g_i-2)$, there exist stable vector bundles $\Ee_i \in \mathbf{M}_{C_i}(2,2g_i-1)$ and a surjection $\sigma = (\sigma_1, \sigma_2) : \Ee_1 \oplus \Ee_2 \to \Oo_p^{\oplus 2}$ satisfying
\[
[\mathrm{ker}(\sigma)] = [\Ee_1' \oplus \Ee_2'] \in \mathbf{U}_C(\mathbf{w}, 2, 0)_{(0,0)},
\]
such that $\sigma_2: \Ee_2 \to \Oo_p^{\oplus 2}$ is also surjective.
\end{lemma}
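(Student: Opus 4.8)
The plan is to realise $\ker(\sigma)$ as a (generally nonsplit) extension of $\Ee_2'$ by $\Ee_1'$, built from elementary modifications, while taking $\sigma_2$ to be a fibre evaluation so that its surjectivity is automatic. First I would produce, for each $i$, a stable bundle $\Ee_i \in \mathbf{M}_{C_i}(2,2g_i-1)$ together with a surjection $\Ee_i \to \Oo_p$ whose kernel is $\Ee_i'$; such an upper modification exists by the argument in the proof of Proposition \ref{(2,0)(0,1)} (when $\Ee_i'$ is strictly semistable one simply avoids the finitely many modifications that fail to be stable, noting that in odd degree semistable means stable). Factoring the composite $\Ee_i \to \Oo_p$ through the fibre $\Ee_{i,q_i}$, this presents $\Ee_i' = \ker(\Ee_i \to \Oo_p)$ as the elementary modification $\Ee_i^{K_i} := \{\, v \in \Ee_i : \bar{v} \in K_i \,\}$ for a well-defined line $K_i \subset \Ee_{i,q_i}$, where $\bar v$ denotes the image of $v$ in the fibre.

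Next I would fix $\sigma_2 := \mathrm{ev}_2 : \Ee_2 \to \Ee_{2,q_2} \cong \Oo_p^{\oplus 2}$, the fibre evaluation, which is surjective by construction, and set $\sigma_1 := h \circ \mathrm{ev}_1$ for the rank-one homomorphism $h : \Ee_{1,q_1} \to \Ee_{2,q_2}$, unique up to scalar, with $\ker(h) = K_1$ and $\mathrm{im}(h) = K_2$. Then $\sigma = (\sigma_1,\sigma_2)$ is surjective, and $\ker(\sigma) = \{\,(v_1,v_2) : \bar{v}_2 = -h(\bar{v}_1)\,\}$. Projecting to the $\Ee_2$-factor, its image is $\{\, v_2 : \bar{v}_2 \in \mathrm{im}(h) = K_2 \,\} = \Ee_2^{K_2} = \Ee_2'$ and its kernel is $\{\,(v_1,0) : \bar{v}_1 \in \ker(h) = K_1 \,\} = \Ee_1^{K_1} = \Ee_1'$, which produces the exact sequence
\[
0 \to \Ee_1' \to \ker(\sigma) \to \Ee_2' \to 0
\]
of depth-one sheaves on $C$. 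The crux is precisely that matching $\ker(h)$ and $\mathrm{im}(h)$ with the two modification directions $K_1, K_2$ forces the sub and quotient to be the prescribed bundles.

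Finally, $\Ee_1'$ and $\Ee_2'$ are $\mathbf{w}$-semistable on $C$ of slope $\mu_{\mathbf{w}} = 0$ by Proposition \ref{propr0}, so the displayed extension shows $\ker(\sigma)$ is $\mathbf{w}$-semistable with $\mathfrak{gr}_{\mathbf{w}}(\ker(\sigma)) \cong \mathfrak{gr}(\Ee_1') \oplus \mathfrak{gr}(\Ee_2')$; hence $[\ker(\sigma)] = [\Ee_1' \oplus \Ee_2']$ by Proposition \ref{(2,0)(0,0)}. Here one should observe that $\ker(\sigma)$ has multi-characteristic $(1,0)$, since its restrictions to the components are $\Ee_1$ and $\Ee_2'$, so this equality a priori lives in $\mathbf{U}_C(\mathbf{w},2,0)_{(1,0)} = \mathbf{U}_C(\mathbf{w},2,0)_{(0,0)}$ by Proposition \ref{(2,0)(0,1)}. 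The hard part is the local verification at the node that the projection $\ker(\sigma) \to \Ee_2$ has image exactly $\Ee_2^{K_2}$ and kernel exactly $\Ee_1^{K_1}$: this is a stalk computation at $p$ in which the ranks of $\sigma_1,\sigma_2$ and the directions $K_1,K_2$ all enter, whereas away from $p$ the sequence is just the split sequence $0 \to \Ee_1 \to \Ee_1 \oplus \Ee_2 \to \Ee_2 \to 0$. The only other point needing care, namely the existence of the stable modifications $\Ee_i$ when $\Ee_i'$ is strictly semistable, is already settled in the proof of Proposition \ref{(2,0)(0,1)}.
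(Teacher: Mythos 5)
Your proposal is correct and follows essentially the same route as the paper: both first realise each $\Ee_i'$ as the kernel of a surjection $\Ee_i \to \Oo_p$ from a stable $\Ee_i \in \mathbf{M}_{C_i}(2,2g_i-1)$ (via the elementary modification argument of Proposition \ref{(2,0)(0,1)}), then take $\sigma_2$ to be the full fibre evaluation and $\sigma_1$ to factor through a rank-one map of fibres whose kernel and image are matched to the two modification directions, yielding the extension $0 \to \Ee_1' \to \ker(\sigma) \to \Ee_2' \to 0$ and hence $[\ker(\sigma)] = [\Ee_1' \oplus \Ee_2']$. Your explicit identification of the rank-one map $h$ with prescribed kernel $K_1$ and image $K_2$ is just a coordinate-free phrasing of the paper's choice of $\xi_1$ and $\varrho$ in diagram (\ref{stabdiag2}).
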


\begin{proof}
As in the proof of Proposition \ref{(2,0)(0,1)}, for each $i=1,2$, there exists a stable vector bundle $\Ee_i\in \mathbf{M}_{C_i}(2,2g_i-1)$ such that $\Ee_i'$ fits into an exact sequence 
\begin{equation}\label{eeerrr1}
0 \to \Ee_i' \to \Ee_i \xrightarrow{\phi_i} \Oo_p \to 0.
\end{equation}

Fix an identification $\eta_i: \Ee_{i, q_i} \stackrel{\simeq}{\longrightarrow} \Oo_p^{\oplus 2}$ and set $\mathrm{res}_i: \Ee_i \rightarrow \Ee_{i, q_i}$ to be the restriction map. Then one can find a map $\xi_1: \Oo_p^{\oplus 2} \rightarrow \Oo_p$ such that $\phi_1=\xi_1\circ \eta_1 \circ \mathrm{res}_1$. Set $\sigma_1=\varrho \circ \phi_1$ and $\sigma_2=\eta_2 \circ \mathrm{res}_2$, where $\varrho: \Oo_p \hookrightarrow \Oo_p^{\oplus2}$ is an injective map. Then, we can find the map $\varrho$, for which the induced map $\phi_2': \Ee_2 \rightarrow \Oo_p$ in the following commutative diagram is identical to $\phi_2$.
\begin{equation}\label{stabdiag2}
\begin{tikzcd}[
row sep=normal, column sep=normal,
  ar symbol/.style = {draw=none,"\textstyle#1" description,sloped},
  isomorphic/.style = {ar symbol={\cong}},
  ]
  &   0\ar[d]     & 0 \ar[d] & 0\ar[d]\\
  0 \ar[r] & \Ee_1' \ar[d]\ar[r] & \Ee_1 \ar[d]\ar[r,"\phi_1"] &\Oo_p \ar[d, "\varrho"] \ar[r] & 0  \\
  0 \ar[r] & \mathrm{ker}(\sigma)\ar[d] \ar[r]             & \Ee_1\oplus \Ee_2  \ar[d]\ar[r, "\sigma"] & \Oo_p^{\oplus 2} \ar[d] \ar[r] & 0\\
      0    \ar[r]   &  \mathrm{ker}(\phi_2')     \ar[r] \ar[d]      & \Ee_2 \ar[d]\ar[r, "\phi_2'"
      ] & \Oo_p \ar[d]  \ar[r]&0 \\
    & 0 & 0 & 0
\end{tikzcd}
\end{equation}
In particular, we get $\ker(\phi_2') = \Ee_2'$ and so $[\ker(\sigma)] = [\Ee_1' \oplus \Ee_2']$. Thus the assertion follows.
\end{proof}

Since $C_i$ is projective, the projection $\pi_{i1}: \mathbf{M}_{C_i}(2,2g_i-1) \times C_i \rightarrow \mathbf{M}_{C_i}(2,2g_i-1)$ is a proper morphism. The Poincar\'e universal bundle $\Pp_i$ is flat over $\mathbf{M}_{C_i}(2,2g_i-1)$ by universality, and so $\Pp_i \otimes \pi_{12}^* \Oo_{q_i}$ is flat over $\mathbf{M}_{C_i}(2,2g_i-1)$, where $\pi_{i2}: \mathbf{M}_{C_i}(2,2g_i-1) \times C_i \rightarrow C_i$. Moreover, the fibres of $\pi_{i1}$ are the smooth curves $C_i$, and the sheaf $\Pp_i \otimes \pi_{i2}^* \Oo_{q_i}$ has constant rank $2$ on these fibres. By the Grauert theorem in \cite{hartshorne}, since 
\[
\mathrm{h}^0\left(\{\Ee_i\}\times C_i, \left(\Pp_i \otimes \pi_{i2}^* \Oo_{q_i}\right)_{|\{\Ee_i\}\times C_i}\right)=2
\]
for each $\Ee_i \in \mathbf{M}_{C_i}(2,2g_i-1)$, the direct image sheaf $\pi_{i1*} (\Pp_i \otimes \pi_{i2}^* \Oo_{q_i})$ is a vector bundle of rank $2$ over $\mathbf{M}_{C_i}(2,2g_i-1)$. Setting $\pi_1$ and $\pi_2$ to be the two natural projections from $\mathbf{M}_{C_1}(2,2g_1-1)\times \mathbf{M}_{C_2}(2,2g_2-1)$, one can define a vector bundle $\EE$ on $\mathbf{M}_{C_1}(2,2g_1-1)\times \mathbf{M}_{C_2}(2,2g_2-1)$:
\begin{equation}
\EE:= \pi_1^* \pi_{11*} \left ( \Pp_1 \otimes \pi_{12}^* \Oo_{q_1}\right)^\vee \otimes \pi_2^* \pi_{21*} \left ( \Pp_2 \otimes \pi_{22}^* \Oo_{q_2}\right).
\end{equation}
Consequently, $\EE$ is a vector bundle of rank 4, and the fibre of $\EE$ over $(\Ee_1, \Ee_2)$ is identified with
\[
\mathrm{Hom}\left(\Ee_{1,q_1}, \Ee_{2,q_2}\right).
\]
Inside the projective bundle $\PP(\EE)$, one can consider the locus $\mathbf{Q}$:
\[
\mathbf{Q}:=\left\{(\Ee_1, \Ee_2, f)\in \PP(\EE)~|~ \mathrm{rank}(f)=1\right\}.
\]
Over each point $(\Ee_1, \Ee_2)\in \mathbf{M}_{C_1}(2,2g_1-1)\times \mathbf{M}_{C_2}(2,2g_2-1)$, the fibre of $\mathbf{Q}$ is defined as the zero locus of the determinant of the homomorphisms in $\PP \mathrm{Hom}\left(\Ee_{1,q_1}, \Ee_{2,q_2}\right)\cong \PP^3 $, and so is isomorphic to $\PP^1\times \PP^1$, a smooth quadric surface. Let $(\Ee_1, \Ee_2, f) \in \PP(\EE)$. For fixed isomorphisms $\eta_i : \Ee_{i,q_i} \xrightarrow{\simeq} \Oo_p^{\oplus 2}$, one can define a homomorphism $\bar{f}:= \eta_2 \circ f \circ {\eta_1}^{-1}: \Oo_p^{\oplus 2} \rightarrow \Oo_p^{\oplus 2}$. Using this, we define the maps $\sigma_1$ and $\sigma_2$ as follows:
\[
\sigma_1 := \bar{f} \circ \eta_1 \circ \mathrm{res}_1: \Ee_1 \to \Oo_p^{\oplus 2}, \quad \sigma_2 := \eta_2 \circ \mathrm{res}_2 : \Ee_2 \to \Oo_p^{\oplus 2}.
\]
Then the map $(\sigma_1, \sigma_2)$ is surjective onto $\Oo_p^{\oplus2}$, and set
\[\
\Ee_{(\sigma_1, \sigma_2)} := \ker\left(\Ee_1 \oplus \Ee_2 \xrightarrow{(\sigma_1,\sigma_2)} \Oo_p^{\oplus 2}\right).
\]

\begin{proposition}\label{sub}
For each $(\Ee_1, \Ee_2, f) \in \PP(\EE)$, the sheaf $\Ee_{(\sigma_1, \sigma_2)}$ is $\mathbf{w}$-semistable, and so one can define a morphism 
\[
\Phi : \PP (\EE) \rightarrow \mathbf{U}_C(\mathbf{w}, 2, 0)
\]
such that $\Phi(\PP(\EE)\setminus \mathbf{Q})\subseteq \mathbf{U}_C(\mathbf{w}, 2, 0)\setminus \mathbf{U}_C(\mathbf{w}, 2, 0)_{(0,0)}$ and $\Phi(\mathbf{Q})\subseteq \mathbf{U}_C(\mathbf{w}, 2, 0)_{(0,0)}$.
\end{proposition}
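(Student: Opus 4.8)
The plan is to first rewrite $\Ee_{(\sigma_1,\sigma_2)}$ in a form that is manifestly independent of the chosen isomorphisms $\eta_i$ and of the representative of $f$, then to verify fibrewise $\mathbf{w}$-semistability by a subsheaf count, and finally to spread the construction out into a flat family over $\PP(\EE)$ so that corepresentability of the moduli space yields $\Phi$. The starting observation is that the auxiliary choices cancel: since $\sigma_1 = \bar f\circ\eta_1\circ\mathrm{res}_1 = \eta_2\circ f\circ\mathrm{res}_1$ and $\sigma_2 = \eta_2\circ\mathrm{res}_2$, we have $(\sigma_1,\sigma_2) = \eta_2\circ\mu_f$, where $\mu_f:\Ee_1\oplus\Ee_2\to\Ee_{2,q_2}$ sends $(e_1,e_2)$ to $f(\mathrm{res}_1 e_1)+\mathrm{res}_2 e_2$. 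As $\eta_2$ is an isomorphism, $\Ee_{(\sigma_1,\sigma_2)}=\ker(\mu_f)$ is independent of $\eta_1,\eta_2$; and since $\mu_{\lambda f}=\mu_f\circ(\lambda\cdot\mathrm{id}_{\Ee_1}\oplus\mathrm{id}_{\Ee_2})$ for $\lambda\neq0$, rescaling $f$ only changes $\ker(\mu_f)$ by an automorphism of $\Ee_1\oplus\Ee_2$. Hence the isomorphism class of $\Ee_{(\sigma_1,\sigma_2)}$ is well defined on $\PP(\EE)$. Moreover $\mathrm{res}_2$ is surjective onto the skyscraper $\Ee_{2,q_2}$, so $\mu_f$ is surjective for every $f$ and $\chi(\Ee_{(\sigma_1,\sigma_2)})=\chi(\Ee_1)+\chi(\Ee_2)-2=0$.

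To prove $\mathbf{w}$-semistability, let $\Ff\subset\Ee:=\Ee_{(\sigma_1,\sigma_2)}$ be a saturated subsheaf. By Lemma \ref{commute} one has $0\to\Ff\to\Ff_1\oplus\Ff_2\to\Tt'\to0$ with $\Ff_i\subseteq\Ee_i$ and $\chi(\Ff)=\chi(\Ff_1)+\chi(\Ff_2)-\tau'$, where $\tau'=\dim(\sigma_1(\Ff_1)+\sigma_2(\Ff_2))$. Stability of $\Ee_i\in\mathbf{M}_{C_i}(2,2g_i-1)$ gives $\chi(\Ff_i)\le0$ when $\Ff_i$ has rank one and $\chi(\Ff_i)\le1$ in general, with equality only for $\Ff_i=\Ee_i$. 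The key refinement is that if $\sigma_i(\Ff_i)=0$ then $\Ff_i\subseteq\ker(\mathrm{res}_i)=\Ee_i(-q_i)$, which is again stable, forcing $\chi(\Ff_i)\le-1$. Running through the finitely many multiranks $(s_1,s_2)\in\{0,1,2\}^2$ and balancing $\chi(\Ff_1)+\chi(\Ff_2)$ against $\tau'$ then gives $\chi(\Ff)\le0$ in every case. When $\mathrm{rank}(f)=2$ both $\sigma_i$ are surjective and the inequality is strict for every proper $\Ff$, so $\Ee$ is $\mathbf{w}$-stable; when $\mathrm{rank}(f)=1$ equality is attained, and $\Ee$ is strictly $\mathbf{w}$-semistable.

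Next I would globalize. The Poincar\'e bundles $\Pp_i$ exist since $\gcd(2,2g_i-1)=1$, and the tautological inclusion $\Oo_{\PP(\EE)}(-1)\hookrightarrow\pi^*\EE$ encodes a universal homomorphism; combined with the universal restriction maps at $q_1,q_2$ it assembles into a surjection of $\PP(\EE)$-flat sheaves on $\PP(\EE)\times C$ with cokernel flat of constant relative length two, whose kernel $\widetilde{\Ee}$ restricts on each fibre to $\Ee_{(\sigma_1,\sigma_2)}$. The kernel of a surjection of flat sheaves with flat cokernel is flat, so $\widetilde{\Ee}$ is a flat family of $\mathbf{w}$-semistable sheaves, and corepresentability of $\mathbf{U}_C(\mathbf{w},2,0)$ produces the morphism $\Phi$. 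For the image statements, when $\mathrm{rank}(f)=2$ the fibre $\Ee$ is $\mathbf{w}$-stable, hence $[\Ee]\notin\mathbf{U}_C(\mathbf{w},2,0)_{(0,0)}$, as the latter locus parametrizes the decomposable (strictly semistable) classes of Proposition \ref{(2,0)(0,0)}; this gives $\Phi(\PP(\EE)\setminus\mathbf{Q})\subseteq\mathbf{U}_C(\mathbf{w},2,0)\setminus\mathbf{U}_C(\mathbf{w},2,0)_{(0,0)}$. When $\mathrm{rank}(f)=1$, only $\sigma_1$ drops rank while $\sigma_2$ stays surjective, so $\Ee_{|C_1}=\Ee_1$ has $\chi_1=1$ and $\Ee_{|C_2}$ has $\chi_2=0$, placing $[\Ee]$ in $\mathbf{U}_C(\mathbf{w},2,0)_{(1,0)}=\mathbf{U}_C(\mathbf{w},2,0)_{(0,0)}$ by Proposition \ref{(2,0)(0,1)}.

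I expect the main obstacle to be the globalization step: producing an honest flat family $\widetilde{\Ee}$ on $\PP(\EE)\times C$ whose formation commutes with base change and whose fibres are exactly the $\Ee_{(\sigma_1,\sigma_2)}$, since this is what upgrades the well-defined pointwise assignment into a morphism into the coarse moduli space. The semistability bookkeeping, though the most computational part, is routine once one records the twist identity $\ker(\mathrm{res}_i)=\Ee_i(-q_i)$ and the resulting drop in $\chi$.
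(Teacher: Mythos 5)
Your argument is correct in substance and reaches the same conclusion, but it organizes the semistability check differently from the paper. The paper splits by $\mathrm{rank}(f)$: for rank one it does not run a subsheaf estimate at all, but instead exhibits the Jordan--H\"older filtration directly, showing $\ker(\bar{\sigma}_1)\hookrightarrow \Ee_{(\sigma_1,\sigma_2)}$ with cokernel $\ker(\bar{\sigma}_2)$, so that $\mathfrak{gr}_{\mathbf{w}}(\Ee_{(\sigma_1,\sigma_2)})=\mathfrak{gr}(\ker(\bar{\sigma}_1))\oplus\mathfrak{gr}(\ker(\bar{\sigma}_2))$ and membership in $\mathbf{U}_C(\mathbf{w},2,0)_{(0,0)}$ follows from Proposition \ref{(2,0)(0,0)}; for rank two it argues by contradiction with a destabilizing $\Ff$, Lemma \ref{commute}, and the bound $\chi(\Ff_i)\le 0$, then rules out strict semistability by a support argument. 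Your uniform case-by-case count over multi-ranks, together with the identification of the component via the multi-characteristic $(1,0)$ and Proposition \ref{(2,0)(0,1)}, achieves the same thing and in addition supplies two points the paper leaves implicit: independence of the construction from the choices of $\eta_i$ and of the representative of $f$, and the flat family over $\PP(\EE)\times C$ needed to upgrade the pointwise assignment to an actual morphism into the corepresented moduli space. These are genuine improvements in rigor.

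One caveat you should repair before writing this up: the ``key refinement'' as stated, namely that $\sigma_i(\Ff_i)=0$ forces $\Ff_i\subseteq\ker(\mathrm{res}_i)=\Ee_i(-q_i)$, is only valid when $\sigma_i$ and $\mathrm{res}_i$ have the same kernel. This holds for $i=2$ always and for $i=1$ exactly when $\bar{f}$ is invertible; when $\mathrm{rank}(f)=1$ one has $\ker(\sigma_1)=\ker(\bar{\sigma}_1)\supsetneq\ker(\mathrm{res}_1)$, and the best available bound is $\chi(\Ff_1)\le\chi(\ker(\bar{\sigma}_1))=0$. Your bookkeeping still closes (you only invoke the refinement to get strict inequalities in the rank-two case, and in the rank-one case the weaker bound $\chi(\Ff_1)\le 0$ suffices for semistability), but the claim should be restricted to the cases where it is true. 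Finally, your identification $\Ee_{|C_1}\cong\Ee_1$ in the rank-one case tacitly uses that the projection $\Ee_{(\sigma_1,\sigma_2)}\to\Ee_1$ is surjective because $\sigma_2$ is; this is correct but worth saying, since it is what pins down the multi-characteristic as $(1,0)$.
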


\begin{proof}
For the $\mathbf{w}$-semistability of $\Ee_{(\sigma,_1, \sigma_2)}$, we divide into two cases depending on the rank of $f$. 

\quad (a) First we assume that $f: \Ee_{1,q_1}\rightarrow\Ee_{2,q_2}$ has rank one, and so we get $\mathrm{Im}(\sigma_1) \cong \Oo_p$. Let ${\bar{\sigma}}_1:\Ee_1 \rightarrow \Oo_p$ be the composition of $\sigma_1$ with projection onto its image $\mathrm{im}(\sigma_1) \cong \Oo_p$. Then we get an exact sequence
\[
0 \rightarrow \ker({\bar{\sigma}}_1) \rightarrow \Ee_1 \xrightarrow{\ {\bar{\sigma}}_1} \Oo_p \rightarrow 0,
\]
from which we get $[\ker({\bar{\sigma}}_1)] \in \mathbf{M}_{C_1}(2,2g_1-2)$. Since the composition $\mathrm{ker}({\bar{\sigma}}_1) \hookrightarrow \Ee_1\oplus \Ee_2 \rightarrow \Oo_p^{\oplus 2}$ is a zero map, we get an injection $\ker({\bar{\sigma}}_1) \hookrightarrow \Ee_{(\sigma_1, \sigma_2)}$, whose cokernel is isomorphic to the kernel of the map $\bar{\sigma}_2: \Ee_2 \rightarrow \Oo_p$, induced from $\sigma_2$. Since $\ker(\bar{\sigma}_2)$ represents a class in $\mathbf{M}_{C_2}(2,2g_2-2)$, we have
\[
\mathfrak{gr}_\mathbf{w}(\Ee_{(\sigma_1, \sigma_2)}) = \mathfrak{gr}(\ker({\bar{\sigma}}_1)) \oplus \mathfrak{gr}(\ker(\bar{\sigma}_2)).
\]
Therefore, by Proposition \ref{(2,0)(0,0)}, we get $\Ee_{(\sigma_1, \sigma_2)}\in\mathbf{U}_C(\mathbf{w}, 2, 0)_{(0,0)}$.

\quad (b) Now we consider the case when $f$ has rank two. Suppose for contradiction that $\Ee_{(\sigma_1, \sigma_2)}$ is not $\mathbf{w}$-semistable with a destabilizing torsion-free subsheaf $\Ff \subset \Ee_{(\sigma_1, \sigma_2)}$, i.e., $\mu_{\mathbf{w}}(\Ff)>0$. By Lemma \ref{commute}, we have an exact sequence
\[
0 \to \Ff \to \Ff_1 \oplus \Ff_2 \to \Tt' \to 0,
\]
where $\Ff_i := \Ff_{|C_i}$ and $\Tt'$ is a torsion subsheaf of $\Tt$, supported at $p$. From the stability of $\Ee_i$, we get $\chi(\Ff_i) \leq 0$ for each $i$. It implies that 
\[
\chi(\Ff) \leq \chi(\Ff_1) + \chi(\Ff_2)\le 0,
\]
a contradiction, and so we get $[\Ee_{(\sigma_1, \sigma_2)}] \in \mathbf{U}_C(\mathbf{w}, 2, 0)$. Now assume further that $\Ee_{(\sigma_1, \sigma_2)}$ is strictly $\mathbf{w}$-semistable. Then one can find a $\mathbf{w}$-stable subsheaf $\Ff \subsetneq \Ee_{(\sigma_1, \sigma_2)}$ with $\mu_{\mathbf{w}}(\Ff)=0$. From a Jordan–Hölder filtration of $\Ee_{(\sigma_1, \sigma_2)}$. By Propositions \ref{pprop22} and \ref{p555}, we get $\mathrm{Supp}(\Ff)=C_i$ for some $i$. By Lemma \ref{commute}, we get an injection $\Ff \into \Ee_i$ so that the composition $\Ff \hookrightarrow \Ee_1 \oplus \Ee_2 \twoheadrightarrow \Oo_p^{\oplus2}$ is a zero map. In particular, the map $\Ee_1/\Ff \rightarrow \Oo_p^{\oplus2}$ is surjective, absurd. Therefore, $\Ee_{(\sigma,_1, \sigma_2)}$ must be $\mathbf{w}$-stable, and we get $\Phi(\PP(\EE)\setminus \mathbf{Q})\subseteq \mathbf{U}_C(\mathbf{w}, 2, 0)\setminus \mathbf{U}_C(\mathbf{w}, 2, 0)_{(0,0)}$. 

The inclusion $\Phi(\mathbf{Q})\subseteq \mathbf{U}_C(\mathbf{w}, 2, 0)_{(0,0)}$ follows from Lemma \ref{ker(0,0)}.
\end{proof}

\begin{proof}[Proof of Theorem \ref{main}]
First, let us define a rational map $\Psi: \mathbf{U}_C(\mathbf{w}, 2, 0) \dashrightarrow \PP(\EE)$. Let $\Ee$ be a $\mathbf{w}$-stable sheaf in $\mathbf{U}_C(\mathbf{w}, 2, 0)$. Then by Lemma \ref{minus}, it admits an exact sequence $(\dagger)$ with each $\Ee_i\in \mathbf{M}_{C_i}(2,2g_i-1)$. If the map ${{\hat\eta}_i}:= \Ee_{i,q_i} \rightarrow \Oo_p^{\oplus 2}$ is not an isomorphism for some $i$, then $\ker(\phi_i)$ would violate the $\mathbf{w}$-stability of $\Ee$ as in Lemma \ref{minus}. Thus, the map $f= ({{\hat\eta}_2})^{-1} \circ {\hat\eta}_1: \Ee_{1,q_1} \rightarrow \Ee_{2,q_2}$ has rank $2$, and we get $(\Ee_1, \Ee_2, f) \in \PP(\EE) \setminus \mathbf{Q}$. Clearly, $\Psi$ is a birational inverse to $\Phi$ so that $\Phi$ is an isomorphism between $\PP(\EE) \setminus \mathbf{Q}$ and $\mathbf{U}_C(\mathbf{w}, 2, 0)\setminus \mathbf{U}_C(\mathbf{w}, 2, 0)_{(0,0)}$. 

Assume that $g=2$, i.e., $(g_1, g_2)=(1,1)$. For the class of a strictly $\mathbf{w}$-semistable sheaf $[\Ll_1\oplus \Ll_1'\oplus \Ll_2\oplus \Ll_2']\in \mathbf{U}_C(\mathbf{w}, 2, 0)_{(0,0)}$, we can have a pair of stable vector bundles $(\Ee_1, \Ee_2)$ as in Lemma \ref{ker(0,0)}. Then the map $f:=\eta_2^{-1}\circ \varrho \circ {\bar\phi}_1 : \Ee_{1,q_1} \rightarrow \Ee_{2,q_2}$ has rank one, where ${{\bar\phi}_1}: \Ee_{1,q_1} \rightarrow \Oo_p$ is the map induced by $\phi_1$, and so we get $(\Ee_1, \Ee_2, f)\in \mathbf{Q}\subset \PP(\EE)$. Thus $\Psi$ is an inverse morphism on $\mathbf{U}_C(\mathbf{w}, 2, 0)$. 

Lastly, assume that $g>2$ and so $g_i\ge 2$ for some $i$, say $i=1$. In this case, we have $\dim\mathbf{Q}=4g-4$, while we have
\[
\dim \mathbf{U}_C(\mathbf{w}, 2, 0)_{(0,0)}=
\begin{cases}
4g-6, & \text{if} \quad g_2 \ge 2 \\
4g-5, & \text{if} \quad g_2=1.
\end{cases}
\] 
In particular, the map $\Psi$ cannot extend to a morphism.
\end{proof}

\begin{remark}\label{QQ}
Assume that each $C_i$ is an elliptic curve, i.e., $g=2$. From the proof of Theorem \ref{main}, one gets 
\[
\mathbf{Q} \cong \mathbf{U}_C(\mathbf{w}, 2, 0)_{(0,0)} \cong \mathrm{Sym}^2(C_1)\times \mathrm{Sym}^2(C_2),
\]
which admits the structure of a $(\PP^1\times \PP^1)$-fibration over $\mathbf{M}_{C_1}(2,1)\times \mathbf{M}_{C_2}(2,1)\cong C_1\times C_2$. For a fixed pair $(\Ee_1,\Ee_2) \in \mathbf{M}_{C_1}(2,1)\times \mathbf{M}_{C_2}(2,1)$, each surjection $\phi_i \in \Hom(\Ee_i,\Oo_p)$ defines a class $[\Ll_i \oplus \Ll_i'] \in \mathbf{M}_{C_i}(2,0)$ uniquely up to scalar by Proposition \ref{(2,0)(0,1)}. Thus, the fibre in $\mathbf{Q}$ over $(\Ee_1,\Ee_2)$ is isomorphic to
\[
\PP\left(\Hom(\Ee_1,\Oo_p)\right) \times \PP\left(\Hom(\Ee_2,\Oo_p)\right)
 \cong \PP^1\times\PP^1.
\]
Indeed, the fibration map $\mathrm{Sym}^2(C_1) \times \mathrm{Sym}^2(C_2) \rightarrow C_1 \times C_2$ can be defined as follows. Consider the following composition of morphisms:
\[
\mathrm{Sym}^2(C_i) \xrightarrow{\rm{Remark} \ref{genus} (a\text{-}1)} \mathbf{M}_{C_i}(2,0) \xrightarrow{\rm{Remark} \ref{rrmml}} \mathbf{M}_{C_i}(2,1) \xrightarrow{\rm{Remark} \ref{genus} (a\text{-}2)} C_i.
\]
Under the identification $\mathrm{Sym}^2(C_i) \cong \mathbf{M}_{C_i}(2,0)$, a set $\{t_i, t_i'\}$ corresponds to the $S$-equivalence class of the vector bundle $\Oo_{C_i}(t_i - q_i) \oplus \Oo_{C_i}(t_i' - q_i)$. By Remark~\ref{rrmml}, this class maps to the unique stable vector bundle in $\mathbf{M}_{C_i}(2,1)$, which is a nontrivial extension $\Ee_i$ of $\Oo_{q_i}$ by $\Oo_{C_i}(t_i - q_i) \oplus \Oo_{C_i}(t_i' - q_i)$. Then, one can check that $\Ee_i$ is also the nontrivial extension of $\Oo_{C_i}(t_i + t_i' - q_i)$ by $\Oo_{C_i}$. Since $\Oo_{C_i}(t_i + t_i' - q_i) \cong \Oo_{C_i}(\bar{t}_i)$ for a unique point $\bar{t}_i \in C_i$, the bundle $\Ee_i$ maps to the point $\bar{t}_i$ under the final identification $\mathbf{M}_{C_i}(2,1) \cong C_i$. From the group law $\oplus$ on $C_i$ given by
\[
\Oo_{C_i}(t_i - q_i) \otimes \Oo_{C_i}(t_i' - q_i) = \Oo_{C_i}(t_i + t_i' - 2q_i).
\]
with $q_i$ as its identity element, we get $\bar{t}_i=t_i \oplus t_i'$. Overall, the fibration map 
\[
\mathrm{Sym}^2(C_1) \times \mathrm{Sym}^2(C_2) \rightarrow C_1 \times C_2
\]
sends a pair $\big(\{t_1,t_1'\},\{t_2,t_2'\}\big)$ to $({t_1 \oplus t_1'},{t_2 \oplus t_2'}) $.
\end{remark}
%%%%%%%%%%%%%%%%%%%%%%%%%%%%%%%%%%%%
\section{Virtual Poincar\'e polynomial of the Moduli space}\label{sec4}
In this section, we compute the virtual Poincar\'e polynomial of the moduli space $\mathbf{U}_C(\mathbf{w}, 2, 0)$ explicitly, using a concrete description of the moduli space. The argument is based on Kirwan’s desingularization together with the properties of the virtual Poincar\'e polynomial.

The virtual Poincar\'e polynomial $\mathrm{P}(X)$ of a quasi-projective variety $X$ over the field $\CC$ of complex numbers is defined with the motivic property by P.~Deligne. If $X$ is smooth and projective, then it is identical to the classical Poincar\'e polynomial $\displaystyle\sum_i (-1)^i \dim \mathrm{H}^i(X) t^i$. Recall that $\mathrm{P}(\PP^n)=\displaystyle\sum_{i=0}^n t^{2i}$, $\mathrm{P}(D)=1+2gt+t^2$ for a smooth projective curve $D$ of genus $g$, and $\mathrm{P}(A)=(1+t)^{2g}$ for an abelian variety $A$ of dimension $g$. We also have
\begin{equation}\label{cg}
C_g(t):=\mathrm{P}(\mathbf{M}_{D}(2,d))=(1+t)^{2g}\left(\frac{(1+t^3)^{2g}-t^{2g}(1+t)^{2g}}{(1-t^2)(1-t^4)}\right)
\end{equation}
for $d$ coprime to $2$, by \cite{newstead}. 

Now, consider the case when $d$ is even. Since $\mathbf{M}_{D}(2,d)\cong\mathbf{M}_{D}(2,d+2e)$ for any $e\in\ZZ$, it suffices to consider $\mathbf{M}_{D}(2,0)$. It is known from \cite{Ful93} that for any orbifold $X$, the virtual Poincar\'{e} polynomial $\mathrm{P}(X)$ coincides with the Poincar\'{e} polynomial for the ordinary cohomology with compactly supported $\mathrm{H}_{c}^{*}(X)$. Let $\mathbf{M}_{2}$ be the Kirwan's partial desingularization of $\mathbf{M}_{D}(2,0)$. By \cite[Proposition 6.9]{Kir85}, $\mathbf{M}_{2}$ is obtained by the composition of the first blow-up $\mathbf{M}_{1}$ of $\mathbf{M}_{D}(2,0)$ along $J=\Pic^{0}(D)$ and the second blow-up of $\mathbf{M}_{1}$ along the proper transform of $(J\times J)/\ZZ_{2}$:
\[
\mathbf{M}_2 \to \mathbf{M}_1 \to \mathbf{M}_D(2,0).
\]
Let $D_{1}$ be the proper transform of the exceptional divisor of the first blow-up and $D_{2}$ be the exceptional divisor of the second blow-up. $\mathbf{M}_{2}$ has at worst orbifold singularities. If we blow up $\mathbf{M}_{2}$ along the orbifold singularities, we get a smooth variety $\mathbf{K}$, the Kirwan's desingularization. Let $\tilde{D}_{1}$ be the proper transform of $D_{1}$ in $\mathbf{K}$.

\begin{proposition}[Proposition 4.9 of \cite{K}]\label{PP of M_2}
The Poincar\'{e} polynomial of $\mathbf{M}_{2}$ is given by 
\begin{align*}
\mathrm{P}(\mathbf{M}_{2})=&\frac{(1+t)^{2g}}{1-t^{2}}\left\{((1+t^{3})^{2g}-t^{2g+2}(1+t)^{2g})\frac{1}{1-t^{4}}\right.+\frac{t^{2}(1-t^{6g-2})}{1-t^{4}}-t^{4g-2}\frac{1-t^{2g}}{1-t^{2}}\\
&+\frac{t^{2}(1-t^{4g-6})}{1-t^{4}}\left(\frac{1}{2}(1+t)^{2g}(1+t^{2})+\frac{1}{2}(1-t)^{2g}(1-t^{2})+\frac{t^{2}(1-t^{2g-2})}{1-t^{2}}\right)\\
&\left.-t^{2g-2}\frac{1-t^{2g-2}}{1-t^{2}}\left((1+t)^{2g}+\frac{t^{2}(1-t^{2g-2})}{1-t^{2}}\right)\right\}.
\end{align*}
\end{proposition}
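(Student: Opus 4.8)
The plan is to compute $\mathrm{P}(\mathbf{M}_2)$ by exploiting the motivic nature of the virtual Poincar\'e polynomial together with the explicit two-step blow-up $\mathbf{M}_2\to\mathbf{M}_1\to\mathbf{M}_D(2,0)$ produced by Kirwan's algorithm. Recall that $\mathrm{P}$ is additive over locally closed decompositions and multiplicative over Zariski-locally trivial fibrations, and that on a smooth projective variety it recovers the ordinary Poincar\'e polynomial. The first step is to stratify $\mathbf{M}_D(2,0)$ by the type of the polystable representative: the stable locus $\mathbf{M}^s$ (smooth, untouched by the blow-ups), the locus $\{[\Ll\oplus\Ll']:\Ll\neq\Ll'\}$ with residual stabilizer $\CC^*$, and the diagonal $J=\{[\Ll\oplus\Ll]:\Ll\in\Pic^0(D)\}$ with the largest stabilizer $\mathrm{PGL}_2$. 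The strictly semistable locus is $\mathrm{Sym}^2 J$, and the first (resp.\ second) Kirwan blow-up is centered on $J$ (resp.\ the proper transform of $\mathrm{Sym}^2 J$), exactly as in the statement.

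Next I would identify the normal data of these centers through deformation theory. By Riemann--Roch, $\mathrm{Ext}^1(\Ll,\Ll)\cong\mathrm{H}^1(\Oo_D)\cong\CC^g$ along the diagonal, while $\mathrm{Ext}^1(\Ll,\Ll')\cong\CC^{g-1}$ for $\Ll\neq\Ll'$ of degree zero. Because the blow-ups are carried out upstairs in the smooth parameter space and then descended, the exceptional divisors are, fibrewise over their centers, GIT quotients of the projectivized normal spaces by the residual stabilizers: over the diagonal the fibre is modelled on $\PP(\mathfrak{sl}_2\otimes\CC^g)/\!/\mathrm{PGL}_2$, and over $\mathrm{Sym}^2 J\setminus J$ it is modelled on $\PP(\CC^{g-1}\oplus\CC^{g-1})/\!/\CC^*$ with $\CC^*$ acting with opposite weights. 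The latter quotient is the smooth variety $\PP^{g-2}\times\PP^{g-2}$, which accounts for the reduced factors $\frac{t^2(1-t^{2g-2})}{1-t^2}=t^2+\cdots+t^{2g-2}$ occurring in the formula.

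I would then assemble $\mathrm{P}(\mathbf{M}_2)$ by additivity along $\mathbf{M}_2=\mathbf{M}^s\sqcup(D_1\setminus D_2)\sqcup D_2$. The contribution of $\mathbf{M}^s$ is extracted from $\mathrm{P}(\mathbf{M}_D(2,0))=\mathrm{P}(\mathbf{M}^s)+\mathrm{P}(\mathrm{Sym}^2 J)$, using the super-symmetric square computation
\[
\mathrm{P}(\mathrm{Sym}^2 J)=\tfrac12\big((1+t)^{4g}+(1-t^2)^{2g}\big)
\]
together with the value of $\mathrm{P}(\mathbf{M}_D(2,0))$ coming from the Harder--Narasimhan/Atiyah--Bott equivariant method (equivalently, Kirwan's formula); this is what produces the leading block $\big((1+t^3)^{2g}-t^{2g+2}(1+t)^{2g}\big)/(1-t^4)$, the counterpart here of $C_g(t)$ in \eqref{cg}. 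The contributions of $D_1$ and $D_2$ are the Poincar\'e polynomials of the two exceptional fibrations; computing $\mathrm{P}$ of the $\mathrm{PGL}_2$- and $\CC^*$-quotient fibres (themselves via Kirwan's formula for these smaller group actions) yields the remaining blocks involving $t^{4g-2}\frac{1-t^{2g}}{1-t^2}$, $\frac{t^2(1-t^{6g-2})}{1-t^4}$, and $t^{2g-2}\frac{1-t^{2g-2}}{1-t^2}(\cdots)$. Finally one accounts for the intersection via $\mathrm{P}(D_1\setminus D_2)=\mathrm{P}(D_1)-\mathrm{P}(D_1\cap D_2)$ and simplifies.

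The main obstacle will be twofold and is concentrated over the diagonal and in the symmetrization. First, the fibre over $J$ is the $\mathrm{PGL}_2$-GIT quotient $\PP(\mathfrak{sl}_2\otimes\CC^g)/\!/\mathrm{PGL}_2$, which is itself singular and carries its own strata of points with extra stabilizer; computing its Poincar\'e polynomial is a smaller instance of the very desingularization problem at hand, and it is the genuine source of the new blocks $t^{4g-2}\frac{1-t^{2g}}{1-t^2}$, $\frac{t^2(1-t^{6g-2})}{1-t^4}$, and the final $t^{2g-2}$-block. Second, the $\ZZ_2$ that swaps the summands $\Ll\leftrightarrow\Ll'$ acts simultaneously on the base $J\times J\to\mathrm{Sym}^2 J$ and on the fibre $\PP^{g-2}\times\PP^{g-2}$, and tracking the Koszul signs on the odd cohomology of $J$ under this swap is exactly what separates the invariant from the anti-invariant part and produces the signature $\tfrac12(1+t)^{2g}(1+t^2)+\tfrac12(1-t)^{2g}(1-t^2)$, the $(1-t)^{2g}$ summand being the signed (anti-invariant) contribution. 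Getting these two pieces of signed bookkeeping right, and matching the codimensions of the centers so that the exponents $6g-2$, $4g-6$, $2g-2$ emerge correctly, is where the real work lies; the remaining algebra is a routine simplification of rational functions in $t$.
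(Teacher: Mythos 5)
The paper offers no proof of this statement: it is quoted directly from Kirwan \cite[Proposition 4.9]{K}, so the only internal justification is the citation, and your attempt must be judged against Kirwan's argument. The geometric scaffolding of your sketch is accurate and matches Proposition \ref{description of exceptional divisors in M_2}: the stratification by stabilizer type, the order of the two blow-ups, the normal spaces $\mathrm{Ext}^1(\Ll,\Ll)\cong\CC^g$ and $\mathrm{Ext}^1(\Ll,\Ll')\cong\CC^{g-1}$, and the fibres $\PP(\mathfrak{sl}_2\otimes\CC^g)/\!/\mathrm{PGL}_2$ over $J$ and $\PP^{g-2}\times\PP^{g-2}$ over $(J\times J)\setminus J$ modulo $\ZZ_2$.

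The assembly step, however, contains a genuine circularity. You propose to extract $\mathrm{P}(\mathbf{M}^s)$ from $\mathrm{P}(\mathbf{M}_D(2,0))=\mathrm{P}(\mathbf{M}^s)+\mathrm{P}(\mathrm{Sym}^2J)$ using ``the value of $\mathrm{P}(\mathbf{M}_D(2,0))$ coming from the Harder--Narasimhan/Atiyah--Bott equivariant method.'' For even degree that method does not compute $\mathrm{P}(\mathbf{M}_D(2,0))$: it computes the equivariant Poincar\'e series $P_t^{G}(R^{ss})$ of the semistable locus of the parameter space, and the two differ precisely because of the strictly semistable points with positive-dimensional stabilizers --- the very phenomenon that forces the partial desingularization. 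Indeed, in this paper $\mathrm{P}(\mathbf{M}_D(2,0))$ is Theorem \ref{vvp of moduli of vector bundles of rk 2 and deg d}, which is \emph{deduced from} the present Proposition, so it cannot be used as an input. Kirwan's actual proof stays entirely in equivariant cohomology: the first block of the displayed formula is $P_t^{G}(R^{ss})$ itself (note the $t^{2g+2}$ rather than the $t^{2g}$ of the coprime formula (\ref{cg})), and the remaining blocks are the corrections to $P_t^{G}$ accumulated at each blow-up, namely the projective-bundle contribution of the exceptional divisor minus the equivariant series of the newly created unstable strata. Your sketch also defers exactly the two computations that constitute the proof --- the equivariant series of the $\mathrm{PGL}_2$-quotient fibre over $J$, and the signed $\ZZ_2$-bookkeeping --- and your heuristic for the latter conflates the swap on $J\times J$, whose graded trace $(1-t^2)^{2g}$ is what enters $\mathrm{P}(\mathrm{Sym}^2J)$, with the involution responsible for the $(1-t)^{2g}(1-t^2)$ term, which acts by $-1$ on $H^1$ of a single copy of $J$. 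As written, the proposal reduces the Proposition to unproved sub-statements plus an input that is not independently available.
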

%&(1+t)^{2g}(1-t^{2})^{-1}\left\{((1+t^{3})^{2g}-t^{2g+2}(1+t)^{2g})(1-t^{4})^{-1}\right. +(t^{2}+\cdots+t^{6g-2})(1+t^{2})^{-1}\\
%&-t^{4g-2}(1+t^{2}+\cdots+t^{2g-2})+(t^{2}+\cdots+t^{4g-6})(1+t^{2})^{-1}\left(\frac{1}{2}(1+t)^{2g}(1+t^{2})\right.\\
%&\left.+\frac{1}{2}(1-t)^{2g}(1-t^{2})+t^{2}+\cdots+t^{2g-2}\right)-\left.t^{2g-2}(1+t^{2}+\cdots+t^{2g-4})((1+t)^{2g}+t^{2}+\cdots+t^{2g-2}) \right\} \\

Let $\Ss_j$ be the tautological bundle of rank $j$ over the Grassmannian $\mathrm{Gr}(j,g)$ for $j=2,3$. For $B\in\mathrm{Gr}(3,g)$, the variety of complete conics $\mathbf{C}(B)$ is defined as the closure of the locus of pairs $(D, D')\in \PP(\mathrm{Sym}^{2}B)\times \PP(\mathrm{Sym}^{2}B^{\vee})$ with $D$ and $D'$ smooth conics that are dual to each other. Then it is realized as the blow-up
\[
\xymatrix{\PP(\mathrm{Sym}^{2}B)&\mathbf{C}(B)\ar[l]_{\hspace{.5cm}\Phi_{B}}\ar[r]^{\Phi_{B}^{\vee}\hspace{.5cm}}&\PP(\mathrm{Sym}^{2}B^{\vee})}
\]
of both of the spaces of conics $\PP(\mathrm{Sym}^{2}B)$ and $\PP(\mathrm{Sym}^{2}B^\vee)$ along $\PP (B)$ and $\PP (B^{\vee})$ the locus of conics of rank one. Then one can obtain the following description of $\tilde{D}_1$ and $D_2\setminus D_1$. 

\begin{proposition}[\cite{KL, OGr99}]\label{description of exceptional divisors in M_2}
\leavevmode

\begin{itemize}
    \item [(i)] $\tilde{D}_{1}$ is the $\mathbf{C}(\Ss_3)$-bundle over $J$. That is, $\tilde{D}_{1}$ is the $(\hat{\PP}^{5})$-bundle over $\mathrm{Gr}(3,g)$ over $J$, where $\hat{\PP}^{5}$ is the blow-up of $\PP^{5}$ along $\PP^{2}$.
    \item [(ii)] $D_{2}\setminus D_{1}$ is the quotient of a $(\PP^{g-2}\times\PP^{g-2})$-bundle over $(J\times J)\setminus J$ by the action of $\ZZ_{2}$.
\end{itemize}
\end{proposition}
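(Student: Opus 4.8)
The plan is to reduce the global statement to a local analysis near the strictly semistable locus and to invoke Kirwan's desingularization machinery together with the explicit GIT computations of O'Grady and Kirwan--Lee. The strictly semistable locus of $\mathbf{M}_D(2,0)$ is $\mathrm{Sym}^2(J)$ with $J=\Pic^0(D)$, and it has two natural strata: the deepest one $J$, consisting of classes $[\Ll\oplus\Ll]$, and the open one $\big((J\times J)\setminus J\big)/\ZZ_2$, consisting of classes $[\Ll\oplus\Ll']$ with $\Ll\ne\Ll'$. By Luna's \'etale slice theorem, near a polystable point the moduli space is modeled on the GIT quotient $\mathrm{Ext}^1(\Ee,\Ee)/\!/\Aut(\Ee)$, and I would carry out the two Kirwan blow-ups and the final orbifold blow-up producing $\mathbf{K}$ fiberwise in these local models and then glue.

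For part (i) I would first identify, at $[\Ll\oplus\Ll]$, the traceless slice representation $W=\mathfrak{sl}_2\otimes H^1(\Oo_D)\cong\mathfrak{sl}_2\otimes\CC^g$, on which $\Aut(\Ll\oplus\Ll)=GL_2$ acts through $PGL_2\cong SO_3$ by the adjoint action on $\mathfrak{sl}_2$; the complementary trace directions account for the base $J$. The first blow-up along $J$ replaces the cone vertex of $W/\!/PGL_2$ by the projective quotient $\PP(W)^{\mathrm{ss}}/\!/PGL_2$, so the crux is the $SO_3$-quotient of $\PP(W)=\PP^{3g-1}$. Here I would exploit the $SO_3$-invariant quadratic form on $\mathfrak{sl}_2$: viewing $w\in W$ as a linear map $\beta_w\colon\mathfrak{sl}_2^\vee\to H^1$, its image $B:=\mathrm{Im}(\beta_w)$ is a $3$-plane, hence a point of $\mathrm{Gr}(3,g)$ on the top stratum, and the invariant form pushes forward to a conic $Q_w$ on $\PP(B)$ whose rank drops precisely along the unstable and strictly semistable strata. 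The invariants thus record the pair $(B,[Q_w])$, with $[Q_w]$ a smooth conic exactly at the stable points. Tracking how the second blow-up and the final orbifold blow-up resolve the rank $\le 2$ degenerations of $Q_w$, I would identify the fiber over $B$ with the variety of complete conics $\mathbf{C}(B)$, that is, the blow-up $\hat{\PP}^5$ of $\PP(\mathrm{Sym}^2 B)\cong\PP^5$ along its rank-one locus $\PP(B)\cong\PP^2$. Assembling these fibers over the tautological $\mathrm{Gr}(3,g)$-bundle and then over $J$ (using $H^1(\Oo_D)$ for every $\Ll$) yields the asserted $\mathbf{C}(\Ss_3)$-bundle structure on $\tilde{D}_1$, and the count $g+(3g-9)+5=4g-4$ confirms $\tilde{D}_1$ is a divisor.

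For part (ii), at $[\Ll\oplus\Ll']$ with $\Ll\ne\Ll'$ the reductive stabilizer is, modulo the central scalars that act trivially, a single $\CC^*$, and the slice splits as $\mathrm{Ext}^1(\Ll,\Ll')\oplus\mathrm{Ext}^1(\Ll',\Ll)$ with $\CC^*$ acting by opposite weights. By Riemann--Roch and Serre duality $\dim\mathrm{Ext}^1(\Ll,\Ll')=h^1(\Ll'\otimes\Ll^{-1})=g-1$ for $\Ll\ne\Ll'$, so each summand is $\CC^{g-1}$. The second blow-up replaces this stratum by its projectivized normal directions, and the opposite-weight splitting forces the exceptional fiber to be $\PP(\mathrm{Ext}^1(\Ll,\Ll'))\times\PP(\mathrm{Ext}^1(\Ll',\Ll))\cong\PP^{g-2}\times\PP^{g-2}$. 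Since this stratum carries the residual $\ZZ_2$ that exchanges $\Ll\leftrightarrow\Ll'$, swapping the two factors and acting on the base, one obtains $D_2\setminus D_1$ as the $\ZZ_2$-quotient of a $(\PP^{g-2}\times\PP^{g-2})$-bundle over $(J\times J)\setminus J$; again $2g+(2g-4)=4g-4$ gives a divisor.

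The main obstacle I anticipate lies in the $SO_3$-GIT analysis of part (i): one must match Kirwan's two successive blow-ups and the subsequent orbifold blow-up with the rank stratification of the conics $Q_w$ so that the resolved fiber is precisely $\mathbf{C}(B)$ and not some other partial resolution, and one must check that the conic construction via the Killing form is $\Aut$-equivariant and varies algebraically, so that the local pictures glue into genuine bundles over $\mathrm{Gr}(3,g)$ and $J$ (and over $(J\times J)\setminus J$ in part (ii)). This equivariant slice bookkeeping is exactly the content of \cite{OGr99} and \cite{KL}, which I would cite for the detailed computations while supplying the identifications above.
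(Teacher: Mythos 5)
The paper gives no proof of this proposition and simply cites \cite{KL, OGr99}; your proposal correctly reconstructs the Luna-slice/Kirwan-blow-up analysis carried out in those references (the $\mathfrak{sl}_2\otimes H^1(\Oo_D)$ slice with its $PGL_2\cong SO_3$ action and the complete-conics resolution for $\tilde D_1$, and the opposite-weight $\CC^*$ slice giving $\PP^{g-2}\times\PP^{g-2}$ for $D_2\setminus D_1$), and like the paper you ultimately defer the detailed equivariant bookkeeping to the same sources. This is essentially the same approach, and the dimension counts and local identifications you supply are consistent with the statement.
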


\begin{theorem}[Section 6.2 of \cite{KL}]\label{vvp of moduli of vector bundles of rk 2 and deg d}
The virtual Poincar\'{e} polynomial of $\mathbf{M}_{D}(2,d)$ for even $d$, is given by the formula
\begin{equation}\label{dg}
D_g(t):=\mathrm{P}(\mathbf{M}_{D}(2,d))=\frac{(1+t)^{2g}((1+t^{3})^{2g}-t^{2g+2}(1+t)^{2g})}{(1-t^{2})(1-t^{4})}-\frac{t^{2}}{2}\left(\frac{(1+t)^{4g}}{1-t^{2}}-\frac{(1-t^{2})^{2g}}{1+t^{2}}\right).
\end{equation}
\end{theorem}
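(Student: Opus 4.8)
The plan is to exploit the motivic nature of the virtual Poincar\'e polynomial $\mathrm{P}$---its additivity over locally closed stratifications and its multiplicativity over Zariski-locally trivial fibrations---to recover $\mathrm{P}(\mathbf{M}_D(2,d))$ from the already computed $\mathrm{P}(\mathbf{M}_2)$ of Proposition \ref{PP of M_2}, by reversing the two Kirwan blow-ups. By the reduction noted above it suffices to treat $d=0$. First I would stratify $\mathbf{M}_D(2,0) = \mathbf{M}_D^{s}(2,0) \sqcup \mathrm{Sym}^2(J)$ into the stable locus and the strictly semistable locus $\mathrm{Sym}^2(J) = (J\times J)/\ZZ_2$, where $J = \Pic^0(D)$ (Remark \ref{genus}(a-1)). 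Since both blow-ups producing $\mathbf{M}_2$ are centered over $\mathrm{Sym}^2(J)$, they are isomorphisms over the stable locus, so $\mathbf{M}_2 \setminus (D_1\cup D_2) \cong \mathbf{M}_D^{s}(2,0)$. Additivity then yields
\[
\mathrm{P}(\mathbf{M}_D(2,0)) = \mathrm{P}(\mathbf{M}_2) - \mathrm{P}(D_1) - \mathrm{P}(D_2\setminus D_1) + \mathrm{P}(\mathrm{Sym}^2 J),
\]
reducing the theorem to three auxiliary computations plugged into Proposition \ref{PP of M_2}.

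For $\mathrm{P}(\mathrm{Sym}^2 J)$ I would use the symmetric-square identity $\mathrm{P}(\mathrm{Sym}^2 X)(t) = \tfrac12\big(\mathrm{P}(X)(t)^2 + \mathrm{P}(X)(-t^2)\big)$, which follows from the stratification $X\times X = \Delta \sqcup (X\times X\setminus\Delta)$ and the behavior of the mixed Hodge structure under the factor-swap; with $\mathrm{P}(J) = (1+t)^{2g}$ this gives $\tfrac12\big((1+t)^{4g}+(1-t^2)^{2g}\big)$, already matching the shape of the subtracted term in \eqref{dg}. For $\mathrm{P}(D_2\setminus D_1)$ I would feed Proposition \ref{description of exceptional divisors in M_2}(ii) into the same machinery: it is the free $\ZZ_2$-quotient of a $\PP^{g-2}\times\PP^{g-2}$-bundle over $(J\times J)\setminus\Delta$, so its polynomial is the swap-invariant part, computed from $\mathrm{P}(\mathrm{Sym}^2 J)-\mathrm{P}(J)$ together with the invariants $\mathrm{Sym}^2 \mathrm{H}^*(\PP^{g-2})$ of the fibre, via multiplicativity over the bundle. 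For $\mathrm{P}(D_1)$ I would start from Proposition \ref{description of exceptional divisors in M_2}(i), presenting $\tilde D_1$ as a $\hat\PP^5$-bundle over a $\mathrm{Gr}(3,g)$-bundle over $J$, whence $\mathrm{P}(\tilde D_1) = (1+t)^{2g}\,\mathrm{P}(\mathrm{Gr}(3,g))\,\mathrm{P}(\hat\PP^5)$ with $\mathrm{P}(\hat\PP^5) = \mathrm{P}(\PP^5) + (t^2+t^4)\mathrm{P}(\PP^2)$ from the blow-up formula.

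The main obstacle is the passage from $\tilde D_1$, computed in the smooth model $\mathbf{K}$, back to $D_1 \subset \mathbf{M}_2$: since $\tilde D_1$ is the proper transform of $D_1$ under the blow-up $\mathbf{K}\to\mathbf{M}_2$ resolving the orbifold singularities, one must identify the center of that final blow-up inside $D_1$ and subtract the corresponding projective-bundle exceptional contribution, keeping in mind that $\mathbf{M}_2$ is only an orbifold, so that $\mathrm{P}$ agrees with the compactly supported Poincar\'e polynomial by \cite{Ful93}. Once $\mathrm{P}(D_1)$, $\mathrm{P}(D_2\setminus D_1)$, and $\mathrm{P}(\mathrm{Sym}^2 J)$ are in hand, the remaining work is the purely algebraic simplification: substituting the explicit $\mathrm{P}(\mathbf{M}_2)$ from Proposition \ref{PP of M_2} into the displayed identity and collecting the rational-function terms. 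I expect the genus-dependent factor $\mathrm{P}(\mathrm{Gr}(3,g))$ and the fibre corrections to cancel against the corresponding pieces of Proposition \ref{PP of M_2}, leaving only the $(1+t)^{2g}$- and $\tfrac{t^2}{2}$-weighted terms of \eqref{dg}; verifying this cancellation is the most error-prone, bookkeeping-heavy step, and is where I would focus the careful checking.
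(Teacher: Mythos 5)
Your proposal follows the paper's proof essentially verbatim: the same stratification $\mathbf{M}_D(2,0)=\mathbf{M}_D(2,0)^s\sqcup (J\times J)/\ZZ_2$, the same inclusion--exclusion $\mathrm{P}(\mathbf{M}_2)-\mathrm{P}(D_1)-\mathrm{P}(D_2\setminus D_1)+\mathrm{P}((J\times J)/\ZZ_2)$, the same $\ZZ_2$-(anti)invariant bookkeeping for $\mathrm{Sym}^2 J$ and $D_2\setminus D_1$, and the same computation of $\mathrm{P}(\tilde D_1)$ from Proposition \ref{description of exceptional divisors in M_2}(i). The one step you flag as the main obstacle---recovering $\mathrm{P}(D_1)$ from $\mathrm{P}(\tilde D_1)$---is handled in the paper exactly as you propose, by identifying the exceptional divisor of $\tilde D_1\to D_1$ (via \cite[Lemma 3.5.4]{OGr97}, it is a $\PP^{g-3}$-bundle over a $\PP(S^2\Ss_2)$-bundle over $J$) and subtracting $\mathrm{P}(\PP(S^{2}\Ss_2))\cdot\mathrm{P}(J)\cdot(\mathrm{P}(\PP^{g-3})-1)$ by the usual blow-up formula.
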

\begin{proof}
The virtual Poincar\'{e} polynomial of $\mathbf{M}_{D}(2,d)$ is
\begin{align*}
\mathrm{P}(\mathbf{M}_{D}(2,d))=\mathrm{P}(\mathbf{M}_{D}(2,0))&=\mathrm{P}(\mathbf{M}_{D}(2,0)^{s})+\mathrm{P}((J\times J)/\ZZ_{2})\\
&=\mathrm{P}(\mathbf{M}_{2})-\mathrm{P}(D_{1})-\mathrm{P}(D_{2}\setminus D_{1})+\mathrm{P(}(J\times J)/\ZZ_{2}).
\end{align*}
By Proposition \ref{description of exceptional divisors in M_2}, \cite[Proof of Lemma 4.8]{K} and \cite[Section 6.2]{KL},
\begin{align*}
\mathrm{P}(\tilde{D}_{1})&=\mathrm{P}(\hat{\PP}^{5})\cdot\mathrm{P}(\mathrm{Gr}(3,g))\cdot\mathrm{P}(J)=(\mathrm{P}(\PP^{5})+\mathrm{P}(\PP^{2}\times\PP^{2})-\mathrm{P}(\PP^{2}))\cdot\mathrm{P}(\mathrm{Gr}(3,g))\cdot\mathrm{P}(J)\\
&=\left(\frac{1-t^{12}}{1-t^{2}}+\left(\frac{1-t^{6}}{1-t^{2}}\right)^{2}-\frac{1-t^{6}}{1-t^{2}}\right)\frac{(1-t^{2g})(1-t^{2g-2})(1-t^{2g-4})}{(1-t^{2})(1-t^{4})(1-t^{6})}(1+t)^{2g}\\
&=\frac{1-t^{8}}{(1-t^{2})^{2}}\frac{(1-t^{2g})(1-t^{2g-2})(1-t^{2g-4})}{(1-t^{2})(1-t^{4})}(1+t)^{2g}
\end{align*}
By the proof of \cite[Lemma 3.5.4]{OGr97}, we see that the exceptional divisor of $\tilde{D}_{1}$ is $(\PP^{g-3})$-bundle over $(\PP(S^{2}\Ss_2))$-bundle over $J$. By the usual blowing-up formula of \cite[p.605]{GH78},
\begin{align*}
\mathrm{P}(D_{1})&=\mathrm{P}(\tilde{D}_{1})-\mathrm{P}(\PP(S^{2}\Ss_2))\cdot\mathrm{P}(J)\cdot(\mathrm{P}(\PP^{g-3})-1)\\
&=\left(\frac{1-t^{8}}{(1-t^{2})^{2}}\frac{(1-t^{2g})(1-t^{2g-2})(1-t^{2g-4})}{(1-t^{2})(1-t^{4})}-\frac{1-t^{6}}{1-t^{2}}\frac{(1-t^{2g})(1-t^{2g-2})}{(1-t^{2})(1-t^{4})}\frac{t^{2}(1-t^{2(g-3)})}{1-t^{2}}\right)(1+t)^{2g}\\
&=\frac{(1+t)^{2g}(1-t^{2g})(1-t^{2g-2})}{(1-t^{2})^{3}(1-t^{4})}\left((1-t^{8})(1-t^{2g-4})-(1-t^{6})(t^{2}-t^{2g-4})\right)\\
&=\frac{(1+t)^{2g}(1-t^{2g})(1-t^{2g-2})}{(1-t^{2})^{3}(1-t^{4})}\left((1+t^{2g+4})-(t^{2}+t^{2g+2})\right)
\end{align*}
Further, we have
\begin{align*}
\mathrm{P}(D_{2}\setminus D_{1})=&\mathrm{P}(\PP^{g-2}\times\PP^{g-2})^{+}\cdot\mathrm{P}((J\times J)\setminus J)^{+}+\mathrm{P}(\PP^{g-2}\times\PP^{g-2})^{-}\cdot \mathrm{P}((J\times J)\setminus J)^{-}\\
=&\mathrm{P}(\PP^{g-2}\times\PP^{g-2})^{+}\left(\frac{1}{2}\mathrm{P}_{t}(J)^{2}+\frac{1}{2}\mathrm{P}_{-t^{2}}(J)-\mathrm{P}(J)\right)+\mathrm{P}(\PP^{g-2}\times\PP^{g-2})^{-}\left(\frac{1}{2}\mathrm{P}_{t}(J)^{2}-\frac{1}{2}\mathrm{P}_{-t^{2}}(J)\right)\\
=&\frac{(1-t^{2g})(1-t^{2g-2})}{(1-t^{2})(1-t^{4})}\left(\frac{1}{2}(1+t)^{4g}+\frac{1}{2}(1-t^{2})^{2g}-(1+t)^{2g}\right) \\
&+t^{2}\frac{(1-t^{2g-2})(1-t^{2g-4})}{(1-t^{2})(1-t^{4})}\left(\frac{1}{2}(1+t)^{4g}-\frac{1}{2}(1-t^{2})^{2g}\right),
\end{align*}
where $\mathrm{P}(Z)^{+}$ (resp. $\mathrm{P}(Z)^{-}$) denotes the virtual Poincar\'{e} polynomial of the $\ZZ_{2}$-invariant (resp. anti-invariant) part of $\mathrm{H}_{c}^{*}(Z)$. By \cite[Proof of Lemma 4.8]{K}, we see that
\[
\mathrm{P}((J\times J)/\ZZ_{2})=\mathrm{P}(J \times J)^{+}=\frac{1}{2}\mathrm{P}_{t}(J)^{2}+\frac{1}{2}\mathrm{P}_{-t^{2}}(J)=\frac{1}{2}(1+t)^{4g}+\frac{1}{2}(1-t^{2})^{2g}.
\]
Further, except for the first term $\displaystyle\frac{(1+t)^{2g}((1+t^{3})^{2g}-t^{2g+2}(1+t)^{2g})}{(1-t^{2})(1-t^{4})}$, we can simplify the sum of the terms containing $(1+t)^{4g}$ and $(1-t^{2})^{2g}$ into $\displaystyle-\frac{t^{2}}{2}\left(\frac{(1+t)^{4g}}{1-t^{2}}-\frac{(1-t^{2})^{2g}}{1+t^{2}}\right)$. Hence, we get the formula of the virtual Poincar\'{e} polynomial of $\mathbf{M}_{D}(2,d)$ as follows:
\begin{align*}
\mathrm{P}(\mathbf{M}_{D}(2,d))=&\frac{(1+t)^{2g}((1+t^{3})^{2g}-t^{2g+2}(1+t)^{2g})}{(1-t^{2})(1-t^{4})}-\frac{t^{2}}{2}\left(\frac{(1+t)^{4g}}{1-t^{2}}-\frac{(1-t^{2})^{2g}}{1+t^{2}}\right)\\
&+\frac{(1+t)^{2g}}{1-t^{2}}\left\{\frac{t^{2}(1-t^{6g-2})}{1-t^{4}}-\frac{t^{4g-2}(1-t^{2g})}{1-t^{2}}+\frac{t^{4}(1-t^{4g-6})(1-t^{2g-2})}{(1-t^{2})(1-t^{4})}-t^{2g}\left(\frac{1-t^{2g-2}}{1-t^{2}}\right)^{2}\right\}\\
&+\frac{(1+t)^{2g}(1-t^{2g})(1-t^{2g-2})}{(1-t^{2})(1-t^{4})}\left(1-\frac{(1+t^{2g+4})-(t^{2}+t^{2g+2})}{(1-t^{2})^{2}}\right).
\end{align*}
Finally, it is easy to see that the second and third lines of the above formula of $\mathrm{P}(\mathbf{M}_{D}(2,d))$
\begin{align*}
\frac{(1+t)^{2g}}{1-t^{2}}\left\{\frac{t^{2}(1-t^{6g-2})}{1-t^{4}}-\frac{t^{4g-2}(1-t^{2g})}{1-t^{2}}+\frac{t^{4}(1-t^{4g-6})(1-t^{2g-2})}{(1-t^{2})(1-t^{4})}-t^{2g}\left(\frac{1-t^{2g-2}}{1-t^{2}}\right)^{2}\right\}\\
+\frac{(1+t)^{2g}(1-t^{2g})(1-t^{2g-2})}{(1-t^{2})(1-t^{4})}\left(1-\frac{(1+t^{2g+4})-(t^{2}+t^{2g+2})}{(1-t^{2})^{2}}\right)
\end{align*}
is zero.
\end{proof}

\begin{remark}
In the proof of Theorem \ref{vvp of moduli of vector bundles of rk 2 and deg d}, we compute $\mathrm{P}(\mathbf{M}_{D}(2,0)^{s})=\mathrm{P}(\mathbf{M}_{2})-\mathrm{P}(D_{1})-\mathrm{P}(D_{2}\setminus D_{1})$ directly. However, it was already done in Section 6.2 of \cite{KL}.
\end{remark}

\begin{remark}\label{reem2}
Setting $\mathbf{M}_{D}(2,\Ll)$ as the fibre of the surjection $\mathbf{M}_D(2,d) \rightarrow \Pic^d(D)$ over $[\Ll]$, we have
\begin{align*}
\mathrm{P}(\mathbf{M}_D(2,d))&=\mathrm{P}(\Pic^d(D))\cdot \mathrm{P}(\mathbf{M}_D(2,\Ll) )\\
&=
\begin{cases}
(1+t)^{2g} \cdot (1+t)^{2g-2} \cdot S_g(t), & \text{if } d \; \text{is odd};\\
(1+t)^{2g} \cdot U_g(t), & \text{if } d \; \text{is even},
\end{cases}
\end{align*}
where $S_g(t)$ and $U_g(t)$ are polynomials with integer coefficients.
\end{remark}

\begin{proof}[Proof of Theorem \ref{virtu}]
The projective bundle $\PP(\EE)$ and $\mathbf{Q}$ have the virtual Poincar\'e polynomials 
\[
\mathrm{P}(\PP(\EE))=\mathrm{P}(\mathbf{M}_{C_1}(2,1))\cdot \mathrm{P}(\mathbf{M}_{C_2}(2,1)) \cdot \mathrm{P}(\PP^3), \quad \mathrm{P}(\mathbf{Q})=\mathrm{P}(\mathbf{M}_{C_1}(2,1))\cdot \mathrm{P}(\mathbf{M}_{C_2}(2,1)) \cdot \mathrm{P}(\PP^1 \times \PP^1),
\]
where $\mathrm{P}(\PP^3)=1+t^2+t^4+t^6$ and $\mathrm{P}(\PP^1\times \PP^1)=(1+t^2)^2$. By Theorem \ref{main}, we obtain
\[
\mathrm{P}(\mathbf{U}_C(\mathbf{w}, 2, 0)\setminus \mathbf{U}_C(\mathbf{w}, 2, 0)_{(0,0)})=\mathrm{P}(\PP (\EE)\setminus \mathbf{Q})=\mathrm{P}(\PP (\EE))-\mathrm{P}(\mathbf{Q}).
\]
We also have $\mathrm{P}(\mathbf{U}_C(\mathbf{w}, 2, 0)_{(0,0)})=\mathrm{P}(\mathbf{M}_{C_1}(2,0)\times \mathbf{M}_{C_2}(2,0))$. Consequently, the virtual Poincar\'e polynomial of $\mathbf{U}_C(\mathbf{w}, 2, 0)$ is given by
\[
\mathrm{P}(\mathbf{U}_C(\mathbf{w}, 2, 0))=\mathrm{P}(\mathbf{M}_{C_1}(2,2g_1-1))\cdot \mathrm{P}(\mathbf{M}_{C_2}(2,2g_2-1))(-t^2+t^6)\\+\mathrm{P}(\mathbf{M}_{C_1}(2,2g_1-2))\cdot \mathrm{P}(\mathbf{M}_{C_2}(2,2g_2-2)).
\]
Now, using the notations in (\ref{cg}) and (\ref{dg}), $\mathrm{P}(\mathbf{U}_C(\mathbf{w},2,0))$ can be written as 
\[
\mathrm{P}(\mathbf{U}_C(\mathbf{w},2,0))=C_{g_1}(t)\,C_{g_2}(t)\,(-t^2+t^6)+D_{g_1}(t)\,D_{g_2}(t).
\]
Then the assertion follows from Remark \ref{reem2}. 
\end{proof}

\begin{corollary}\label{comparison}
For a smooth curve $C'$ of genus $g$ over the complex number field, we have
\[
\mathrm{P}(\mathbf{U}_C(\mathbf{w}, 2, 0)) = \mathrm{P}(\mathbf{M}_{C'}(2, 2g-2))
\]
if and only if $g=2$.
%\[
%\mathrm{P}(\mathbf{U}_C(\mathbf{w}, 2, 0))=
%\begin{cases}
%\mathrm{P}(C_1){\cdot}\mathrm{P}(C_2){\cdot}\mathrm{P}(\PP^3)  & \text{,if } (g_1, g_2)=(1,1); \\
%??? & \text{,if } (g_1, g_2)=(1,g-1) \text{ or }(g-1, 1); \\
%\begin{split}\mathrm{P}(\mathbf{M}_{C_1}(2,1))\cdot \mathrm{P}(\mathbf{M}_{C_2}(2,1))(-t^2+t^6)\\+\mathrm{P}(\mathbf{M}_{C_1}(2,0))\cdot \mathrm{P}(\mathbf{M}_{C_2}(2,0))\end{split} & \text{,if }g_i \ge 2 \text{ for each }i.
%\end{cases}
%\]
\end{corollary}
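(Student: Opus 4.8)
The plan is to reduce the claimed identity of virtual Poincaré polynomials to a single polynomial identity and then detect its failure for $g\ge 3$ through one explicit high-degree coefficient.

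First I would combine the formula of Theorem \ref{virtu} with Theorem \ref{vvp of moduli of vector bundles of rk 2 and deg d}: since $2g-2$ is even, $\mathrm{P}(\mathbf{M}_{C'}(2,2g-2))=D_g(t)=(1+t)^{2g}U_g(t)$, while Theorem \ref{virtu} gives $\mathrm{P}(\mathbf{U}_C(\mathbf{w},2,0))=(1+t)^{2g}\big[(1+t)^{2g-4}S_{g_1}S_{g_2}(-t^2+t^6)+U_{g_1}U_{g_2}\big]$. Cancelling the nonzero factor $(1+t)^{2g}$, the corollary is equivalent to the assertion that
\[
G(t):=(1+t)^{2g-4}S_{g_1}(t)S_{g_2}(t)(-t^2+t^6)+U_{g_1}(t)U_{g_2}(t)-U_{g_1+g_2}(t)
\]
vanishes identically if and only if $g=2$. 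For $g=2$ one must have $g_1=g_2=1$, and substituting $S_1=1$, $U_1=1+t^2$ and $U_2=1+t^2+t^4+t^6$ gives $G=0$ directly; geometrically this reflects the isomorphism $\PP(\EE)\cong\mathbf{U}_C(\mathbf{w},2,0)$ of Theorem \ref{main} together with the Narasimhan--Ramanan $\PP^3$-bundle description of $\mathbf{M}_{C'}(2,0)$.

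For $g\ge 3$ I would show $G\ne 0$ by isolating a single high-degree coefficient. The key structural input is palindromy: $(1+t)^{2g-2}S_g=\mathrm{P}(\mathbf{M}_D(2,\Ll))$ is the Poincaré polynomial of a smooth projective variety of dimension $3g-3$, so $S_g$ is palindromic of degree $4g-4$ (this is also visible directly from \eqref{cg}, since numerator and denominator of $S_g$ are palindromic). Hence $P:=(1+t)^{2g-4}S_{g_1}S_{g_2}$ is palindromic of degree $6g-12$, which lets me read its top coefficients off the easily computed low coefficients $[P]_{t^0},\dots,[P]_{t^3}$. On the other side, writing $U_g=N/\big(2(1-t)^2(1+t^2)\big)$ as in \eqref{dg} and matching $N=U_g\cdot 2(1-t)^2(1+t^2)$ degree by degree from the top, one finds $[U_g]_{t^{6g-6}}=1$, $[U_g]_{t^{6g-7}}=0$, $[U_g]_{t^{6g-8}}=1$, $[U_g]_{t^{6g-9}}=2g$, valid for $g\ge 3$, where the correction terms of $N$ have too low a degree to interfere with the top four coefficients.

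Now the terms $U_{g_1}U_{g_2}$ and $-t^2 P$ inside $G$ have strictly smaller degree than $t^6P$ and $U_g$, so in the top range $G$ agrees with $t^6 P-U_g$. When $g_1,g_2\ge 2$ one computes $[P]_{t^2}=2$, whence by palindromy $[t^6P]_{t^{6g-8}}=[P]_{t^2}=2$ and
\[
[G]_{t^{6g-8}}=[P]_{t^2}-[U_g]_{t^{6g-8}}=2-1=1\ne 0.
\]
The delicate case, and the main obstacle, is the boundary case $g_1=1$, $g_2=g-1\ge 2$: here $S_1=1$ forces $[P]_{t^2}=1$, so the coefficient of $t^{6g-8}$ cancels, and moreover $\deg(U_{g_1}U_{g_2})=\deg\big((1+t^2)U_{g-1}\big)$ jumps up to $6g-10$. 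One must therefore descend one step further, to the coefficient of $t^{6g-9}$, which still lies strictly above $6g-10$; using $[P]_{t^3}=2g-2$ and $[U_g]_{t^{6g-9}}=2g$ one gets
\[
[G]_{t^{6g-9}}=[P]_{t^3}-[U_g]_{t^{6g-9}}=(2g-2)-2g=-2\ne 0.
\]
In either case $G\ne 0$, completing the proof. The real work lies in the careful bookkeeping of which of the four terms of $G$ reaches which degree — bookkeeping that shifts precisely when some $g_i=1$ — together with the two fourth-order coefficient computations $[P]_{t^3}$ and $[U_g]_{t^{6g-9}}$ needed to resolve the boundary case.
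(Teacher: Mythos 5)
Your argument is correct, but it takes a genuinely different route from the paper's. The paper works with $F(t)=C_{g_1}C_{g_2}(-t^2+t^6)+D_{g_1}D_{g_2}-D_g$ and simply evaluates at $t=1$: the first term drops out because $-t^2+t^6$ vanishes there, and a limit computation (L'H\^opital on the closed formula (\ref{dg})) gives $D_g(1)=2^{4g-2}(3-g)$, whence $F(1)=2^{4g-4}(g+g_1g_2-3)\neq 0$ for $g\geq 3$ --- one evaluation, no case analysis. You instead cancel $(1+t)^{2g}$ and detect nonvanishing of $G=t^6P+U_{g_1}U_{g_2}-t^2P-U_g$ in its top-degree coefficients, using Poincar\'e-duality palindromy of $S_g$ to transport the computation to low-order coefficients and the expansion $A=(1+t)^{2g-2}(1-t+t^2)^{2g}=(1+t^3)^{2g}(1+t)^{-2}$ to read off $[U_g]_{t^{6g-6}},\dots,[U_g]_{t^{6g-9}}=1,0,1,2g$. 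I checked the key numbers: $[P]_{t^2}=2$ for $g_1,g_2\geq 2$ and $[P]_{t^2}=1$, $[P]_{t^3}=2g-2$ when $g_1=1$ (where $P=(1+t)^{2(g-1)-2}S_{g-1}=\bigl((1+t^3)^{2g-2}-t^{2g-2}(1+t)^{2g-2}\bigr)/\bigl((1-t^2)(1-t^4)\bigr)$), and the degree bookkeeping $\deg(U_1U_{g-1})=6g-10$ is right since $U_1=1+t^2$; so $[G]_{t^{6g-8}}=1$ resp.\ $[G]_{t^{6g-9}}=-2$ as you claim, and the case split is exhaustive for $g\geq 3$. What your approach buys is extra information --- it exhibits the first nonvanishing coefficient of $F$ from the top and explains structurally why the boundary case $g_i=1$ is delicate --- at the cost of more bookkeeping; the paper's evaluation at $t=1$ is shorter and yields the clean invariant $F(1)=2^{4g-4}(g+g_1g_2-3)$, which also makes visible the dependence on the partition $(g_1,g_2)$ and not just on $g$.
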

%Over the complex number field, the virtual Poincar\'e polynomial $\mathrm{P}(\mathbf{U}_C(\mathbf{w}, 2, 0))$ is given as follows, independently on the choice of the polarization $\mathbf{w}$:

%In particular, we have
%\[
%\mathrm{P}(\mathbf{U}_C(\mathbf{w}, 2, 0))=\mathrm{P}(\mathbf{M}_{C'}(2, 2g-2))
%\]
%for any smooth curve $C'$ of genus $g$.
\begin{proof}
Using the notations in (\ref{cg}) and (\ref{dg}), the difference $F(t):=\mathrm{P}(\mathbf{U}_C(\mathbf{w},2,0))-\mathrm{P}(\mathbf{M}_{C'}(2,2g-2))$ can be written as 
\begin{equation}\label{expp}
F(t)=C_{g_1}(t)\,C_{g_2}(t)\,(-t^2+t^6)+D_{g_1}(t)\,D_{g_2}(t)-D_{g}(t).
\end{equation}
One can check that $F(t)\equiv 0$ when $(g_1, g_2)=(1,1)$ by a direct computation. Indeed, we have
\[
\mathrm{P}(\mathbf{U}_C(\mathbf{w}, 2, 0))=\mathrm{P}(\mathbf{M}_{C'}(2, 2))=(1+t)^4(t^6+t^4+t^2+1).
\]
For the converse, it would be enough to show that $F(1)=D_{g_1}(1)D_{g_2}(1) - D_{g}(1) \ne 0$ for $g\ge 3$. Observe that
\[
(1-t+t^2)^g - t^{g+1} = (1-t) Q_g(t), \qquad
(1+t)^{2g-1}(1+t^2) - (1-t)^{2g+1} = 2 t \tilde{Q}_g(t),
\]
where $Q_g(t), \tilde{Q}_g(t) \in \ZZ[t]$. This leads to:
\begin{align*}
D_g(1) 
&= \lim_{t \to 1} D_g(t) \\
&= \lim_{t \to 1} \frac{(1+t)^{2g+1}}{1-t^{4}}\cdot \big((1+t)^{2g-2}Q_g(t)\cdot((1-t+t^2)^g+t^{g+1})-t^3\tilde{Q}_g(t) \big) \\
&= \lim_{t \to 1} \frac{ \frac{d}{dt} \Big[ (1+t)^{2g+1} \big( (1+t)^{2g-2}Q_g(t)\cdot((1-t+t^2)^g+t^{g+1})-t^3\tilde{Q}_g(t) \big) \Big] }
{ \frac{d}{dt} (1-t^4) }\\
%&= \lim_{t \to 1} \frac{ \begin{aligned}[t]
%& (2g+1)(1+t)^{2g} \big( (1+t)^{2g-2}Q_g(t)\cdot((1-t+t^2)^g+t^{g+1})-t^3\tilde{Q}_g(t) \big) \\
%& + (1+t)^{2g+1} \frac{d}{dt} \big((1+t)^{2g-2}Q_g(t)\cdot((1-t+t^2)^g+t^{g+1})-t^3\tilde{Q}_g(t) \big)
%\end{aligned} }
%{ -4 t^3 } \\
&= \frac{ 2^{2g+1} \left((2g-2)2^{2g-2}Q_g(1)+2^{2g-1}Q_g'(1)+2^{2g-2}Q_g(1)(2g+1)-3\tilde{Q}_g(1)-\tilde{Q}_g'(1) \right) }{ -4 } \\
&= 2^{4g-2}(3-g).
\end{align*}
Therefore, we have 
\[
D_{g_1}(1)D_{g_2}(1)-D_{g_1+g_2}(1)=2^{4g-4}(g+g_1g_2-3),
\]  
and this is not zero for $g\ge 3$. 
\end{proof}

\begin{remark}\label{Poin:ell}
In case $(g_1, g_2)=(1,1)$, the moduli space $\mathbf{U}_C(\mathbf{w}, 2, 0)$ is isomorphic to $\PP(\EE)$, and so it is smooth. The virtual Poincar\'e polynomial is identical to the classical Poincar\'e polynomial, and it is given by 
\[
\mathrm{P}(C_1){\cdot}\mathrm{P}(C_2){\cdot}\mathrm{P}(\PP^3)=(1+2t+t^2)^2(1+t^2+t^4+t^6), 
\]
because $\mathbf{M}_{C_i}(2,1)\cong C_i$ and $\mathbf{M}_{C_i}(2,0)\cong \mathrm{Sym}^2(C_i)$ is $(\PP^1)$-bundle over $C_i$ for each $i$. Assume that each $g_i\ge 2$. Under the map $\Phi : \PP(\EE) \rightarrow \mathbf{U}_C(\mathbf{w}, 2, 0)$, the subvariety $\mathbf{Q}$ contracts to $\mathbf{U}_C(\mathbf{w}, 2, 0)_{(0,0)} \cong \mathbf{M}_{C_1}(2,2g_1-2)\times \mathbf{M}_{C_2}(2,2g_2-2)$ along the family of nontrivial extensions (\ref{eerrr1}) in Proposition \ref{(2,0)(0,1)}. 
Lastly, without loss of generality, assume that $g_1=1$. The fibre $\PP^1 \times \PP^1$ in $\mathbf{Q}$ over $(\Ee_1, \Ee_2)\in \mathbf{M}_{C_1}(2,1)\times \mathbf{M}_{C_2}(2,2g_2-1)$ maps to $\mathbf{M}_{C_1}(2,0)\times \mathbf{M}_{C_2}(2,2g_2-2)$ isomorphically. Indeed, the first ruling of $\PP^1\times \PP^1$ maps to $\mathbf{M}_{C_1}(2,0) \cong \mathrm{Sym^2}(C_1)$ isomorphically, as explained in Remark \ref{QQ}, while the second ruling maps to $\mathbf{M}_{C_2}(2,2g_2-2)$ isomorphically as a conic, called a {\it Hecke curve}; see \cite{narashimhan-ramanan1}. 
\end{remark}

\begin{remark}
The polynomial $F(t)=\mathrm{P}(\mathbf{U}_C(\mathbf{w},2,0))-\mathrm{P}(\mathbf{M}_{C'}(2,2g-2))$ depends both on $g_1$ and $g_2$, not just on $g$. For instance, an explicit computation when $g=4$ shows that
\begin{equation}\label{g_1g_2}
F(t) =
\begin{cases}
t^5 (1+t)^8 \, (t^4-5) (t^7-5t^3-8t^2-8), & \text{for } (g_1,g_2)=(2,2),\\[1ex]
t^5 (1+t)^8 \, \Big(-2(t^{10}+t^8+6t^7+t^6+t^4-14t^3-14t^2-14)\Big), & \text{for } (g_1,g_2)=(1,3).
\end{cases}
\end{equation}
One can also observe that $F(t)$ always factors as
\[
F(t)=t^{5}(1+t)^{2g}H_{g_1,g_2}(t),
\]
for some polynomial $H_{g_1,g_2}(t) \in \ZZ[t]$ with $H_{g_1, g_2}(0)\ne 0$. Indeed, for $g\ge 2$ we have 
\[
C_g(t)=1+2g\,t+\Bigl(\tbinom{2g}{2}+1\Bigr)t^{2}
+\Bigl(\tbinom{2g}{3}+4g\Bigr)t^{3}
+\Bigl(\tbinom{2g}{4}+\tbinom{2g}{2}+4g^{2}+(a_g+2)\Bigr)t^{4}
+O(t^{5}),
\]
with $a_2=-1$ and $a_g=0$ for $g\ge 3$. Similarly, the expansion of $D_g(t)$ is
\[
D_g(t)=1+2g\,t+(2g^{2}-g+1)t^{2}
+\frac{2g(2g^{2}-3g+4)}{3}\,t^{3}
+\Bigl(\tfrac{2}{3}g^{4}-2g^{3}+\tfrac{23}{6}g^{2}-\tfrac{3}{2}g+1\Bigr)t^{4}
+O(t^{5}).
\]
By a direct computation with (\ref{expp}), we obtain
\[
F(t)\equiv 0\pmod{t^5}. 
\]
Furthermore, observe that the coefficient of the $t^{5}$-term in $D_{g}(t)$ is
\[
\binom{2g}{5}+\binom{2g}{3}+4g+2g^{2}(2g-1)-\frac{g(4g-1)(4g-2)}{3}.
\]
One can check that the coefficient of the $t^5$-term in $F(t)$ is $4g_1g_2(g-1)-2g$, and so it is positive for $g\ge 3$. Then the assertion follows from Gauss's lemma. 
\end{remark}

\end{document}